\newtheorem{theorem}{Theorem}
\newtheorem{lemma}{Lemma}
\newtheorem{corollary}{Corollary}
\newtheorem{proposition}{Proposition}
\newtheorem{claim}{Claim}
\def \E {{\mathbb E}}
\def \L {{\mathbb L}}
\def \N {{\mathbb N}}
\def \P {{\mathbb P}}
\def \Z {{\mathbb Z}}
\def \T {{\mathbb T}}
\newcommand{\cals}{\mathbcal{s}}
\newcommand{\CP}[4][]{\if\relax\detokenize{#1}\relax \xi_{#2,#3;#4}^{#2} \else \xi_{#2,#3;#4}^{#1} \fi}
\newcommand{\barCP}[3][]{\if\relax\detokenize{#1}\relax \bar{\xi}_{#2,#3}^{#2} \else \bar{\xi}_{#2,#3}^{#1} \fi}
\newcommand{\tildeCP}[5][]{\if\relax\detokenize{#1}\relax \tilde{\Xi}_{#2,#3}^{#2}(#4,#5) \else \tilde{\Xi}_{#2,#3}^{#1}(#4,#5) \fi}
\newcommand{\specificthanks}[1]{\@fnsymbol{#1}}
\begin{document}
	
	\title{Graph constructions for the contact process with a prescribed critical rate}
	
	\renewcommand{\thefootnote}{\arabic{footnote}}
	\author{
		\renewcommand{\thefootnote}{\arabic{footnote}} Stein Andreas Bethuelsen\footnotemark[1], \\
		Gabriel Baptista da Silva\footnotemark[1] \textsuperscript{,2}, \\
		\renewcommand{\thefootnote}{\arabic{footnote}} Daniel Valesin\footnotemark[2]}
	
	\renewcommand{\thefootnote}{\fnsymbol{footnote}}
	\footnotetext[1]{Corresponding author. Email: \url{g.da.silva@rug.nl}}
	\renewcommand{\thefootnote}{\arabic{footnote}}
	\footnotetext[1]{Department of Mathematics, University of Bergen, Norway}
	\footnotetext[2]{Bernoulli Institute, University of Groningen, The Netherlands}
	\date{\today}
	\maketitle
	
	\begin{abstract}
		We construct graphs (trees of bounded degree) on which the contact process has critical rate (which will be the same for both global and local survival) equal to any prescribed value between zero and~$\lambda_c(\mathbb{Z})$, the critical rate of the one-dimensional contact process.	
		We exhibit both graphs in which the process at this target critical value survives (locally) and graphs where it dies out (globally).
	\end{abstract}
	{\footnotesize Keywords: contact process; phase transition; interacting particle systems; critical value}

	\section{Introduction}
	\label{sec:intro}
	
	This paper exhibits a range of examples concerning phase transitions of the contact process. Our work can be seen as a complement to the previous works by Madras, Schinazi and Schonmann~\cite{mss}, and by Salzano and Schonmann~\cite{ss1,ss2}, where the same line of inquiry was pursued.

	The contact process describes a class of interacting particle systems which serve as a model for the spread of epidemics on a graph. It was introduced by Harris~\cite{harris}. It is defined on a graph~$G$ with uniformly bounded degrees by the following rules for a continuous-time Markov dynamics: vertices can be healthy (state 0) or infected (state~1); infected vertices recover with rate one, and transmit the infection to each healthy neighbour with rate~$\lambda > 0$. The above description of the dynamics expresses (on graphs of uniformly bounded degree) a Markov pre-generator on a dense subspace of the space of real-valued functions on the space of configurations, endowed with the topology of the supremum norm. The closure of this pre-generator is a Feller generator, making the contact process a Feller process. See Chapter 1 of~\cite{lig1} for more details.
	
	We denote by~$(\xi^A_{G,\lambda;t}: t \ge 0)$ the contact process on~$G = (V,E)$ with infection rate~$\lambda$ and initially infected set~$A \subset V$ (as explained in Section~\ref{ss:graphical}, we will occasionally omit or change aspects of this notation). With a conventional abuse of notation, we treat~$\xi^A_{G,\lambda;t}$ as either an element of~$\{0,1\}^V$ or as a subset of~$V$ (the set of infected vertices). We refer the reader to~\cite{lig1} and~\cite{lig2} for an introduction to this process, including all the statements made without further explicit reference in this introduction.

	The contact process has as absorbing state the configuration in which all individuals are healthy; we denote this state by~$\varnothing$. We define the probability of survival (of the infection)
	$$\zeta_{G,\lambda}(A) := \mathbb{P}\left(\xi^A_{G,\lambda;t} \neq \varnothing \text{ for all }t\right),\quad A \subset V.$$
	Due to an elementary monotonicity property of the process, this quantity is non-decreasing in~$\lambda$,~$G$ and~$A$ (for the latter two, take the partial order given by graph and set inclusion, respectively). Moreover, if~$G$ is connected, then for any~$\lambda$,~$\zeta_{G,\lambda}(A)$ is either equal to zero for all finite~$A$ (in which case the process with parameter~$\lambda$ on~$G$ is said to die out) or non-zero for any finite and non-empty~$A$ (the process is then said to survive, or to survive globally).
	We then define the critical threshold for global survival as
	$$\lambda^\mathrm{glob}_{c}(G) := \inf\left\{\lambda: \zeta_{G,\lambda}(A) > 0 \text{ for all (any) finite and non-empty }A \right\}.$$
	Next, define the probability of local survival
	$$\beta_{G,\lambda}(A,v):= \mathbb{P}\left(\limsup_{t \to \infty}\xi^A_{G,\lambda;t}(v) = 1\right),\quad A \subset V,\; v \in V.$$
	It is readily seen that~$\beta_{G,\lambda}(A,v) \leq \zeta_{G,\lambda}(A)$. Moreover,~$\beta_{G,\lambda}(A,v)$ is non-decreasing  in~$\lambda,G,A$, and if~$G$ is connected, then for fixed~$\lambda$ we either have $\beta_{G,\lambda}(A,v) = 0$ for all choices of (finite, non-empty)~$A$ and~$v$, or~$\beta_{G,\lambda}(A,v) > 0$ for all such choices. In the latter case, we say that the process survives locally (in other sources, it is said in this case that the process survives strongly, or is recurrent). We define the critical threshold for local survival as
	$$\lambda^\mathrm{loc}_c(G) := \inf\left\{\lambda: \beta_{G,\lambda}(A,v) > 0 \text{ for all(any)~$v$ and finite }A \neq \varnothing \right\}.$$
	
	The contact process has been initially studied on~$\mathbb{Z}^d$; there it holds that the two critical values coincide; we will denote their common value by~$\lambda_c(\mathbb{Z}^d)$. It was proved in~\cite{bezui} that the process on~$\mathbb{Z}^d$ at the critical rate dies out. Results for the contact process on the infinite regular tree with offspring number~$d \ge 2$ (denoted~$\mathbb{T}^d$) were obtained in the~1990's, notably in~\cite{lig_tree_a},~\cite{lig_tree_b} and~\cite{pem_tree}. There it holds that~$0<\lambda_c^\mathrm{glob}(\mathbb{T}^d) <\lambda_c^\mathrm{loc}(\mathbb{T}^d) < \infty$, and moreover the  process at the lower critical value dies out, and the process at the upper critical value survives globally but not locally. More recently, results for the contact process on random trees have gained interest. In particular, \cite{hd} and \cite{bnns} completely characterize the existence of a subcritical regime for the process on Galton-Watson trees (see also \cite{nns}).
	
	The main result of this paper concerns the set of values that the critical rates~$\lambda_c^\text{glob}(G),\;\lambda_c^\text{loc}(G)$ can attain, as~$G$ ranges over any locally finite graph, and also whether the critical contact process can survive for these possible values of the critical rate. Let us make some preliminary comments in this direction.
	
	\begin{enumerate}
\item On a finite graph~$G$, the contact process dies out regardless of~$\lambda$, that is, we have~$\lambda^\mathrm{glob}_c(G) = \lambda^\mathrm{loc}_c(G) = \infty$. 
\item On an infinite graph~$G$, we necessarily have~$\lambda_c^\mathrm{glob}(G) \le \lambda_c^\mathrm{loc}(G) \le \lambda_c(\mathbb{Z})$. This follows from monotonicity:~$G$ contains a copy of~$\mathbb{N}$ inside it (since~$G$ is locally finite), and it is known that~$\lambda_c(\mathbb{N}) =\lambda_c(\mathbb{Z})$; see for instance Corollary~2.5 in~\cite{amp}. 
\item There are infinite graphs for which the critical rate for local (hence also global) survival is arbitrarily small, such as high-dimensional lattices and high-degree regular trees, see~\cite[equation~(1.14)]{griff} and~\cite[Theorem~2.2]{pem_tree}
\item There are also infinite graphs for which the critical rate for local (hence also global) survival is equal to zero, such as Galton-Watson trees with sufficiently heavy-tailed offspring distributions, see~\cite[page~2112]{pem_tree}.
\item  An example was given in~\cite{ss2} of a graph~$G$ with~$\lambda^\mathrm{loc}_c(G) = \lambda^\mathrm{glob}_c(G) = \lambda_c(\mathbb{Z})$ and so that the contact process with this critical rate survives locally. This is the ``desert-and-oasis'' example in page 863 of that paper, which is based on a construction of~\cite{mss} pertaining to a contact process with inhomogeneous rates.
\item In pages~859-862 of~\cite{ss2}, the authors fix~$d \ge 2$, then fix an arbitrary~$\lambda$ with~$\lambda_c^\mathrm{glob}(\mathbb{T}^d )< \lambda < \lambda_c^\mathrm{loc}(\mathbb{T}^d)$, and construct a graph~$G$ for which~$\lambda = \lambda_c^\mathrm{glob}(G) < \lambda_c^\mathrm{loc}(G)$. The class of examples obtained in this way therefore shows that
\begin{equation}\forall \lambda \in \bigcup_{d=2}^\infty (\lambda^\mathrm{glob}_c(\mathbb{T}^d),\lambda^\mathrm{loc}_c(\mathbb{T}^d))\quad \exists G: \lambda = \lambda_c^\mathrm{glob}(G) < \lambda_c^\mathrm{loc}(G). \label{eq:gaps}\end{equation}
\end{enumerate}
	
	We now state our main result:	
	\begin{theorem}\label{thm:main}\begin{itemize}
			\item[(a)] For any~$\lambda \in (0,\lambda_c(\mathbb{Z}))$ there exists a tree~$G$ of bounded degree for which $\lambda_c^\mathrm{glob}(G) = \lambda_c^\mathrm{loc}(G)= \lambda$ and the contact process on~$G$ with rate~$\lambda$ survives locally.
			\item[(b)] For any~$\lambda \in (0,\lambda_c(\mathbb{Z}))$ there exists a tree~$G$ of bounded degree such that $
			\lambda_c^\mathrm{glob}(G) = \lambda_c^\mathrm{loc}(G)= \lambda$ and the contact process on~$G$ with rate~$\lambda$ dies out.
		\end{itemize}
	\end{theorem}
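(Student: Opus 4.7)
My plan is to construct each tree~$G$ by decorating a backbone isomorphic to~$\mathbb{N}$ with a sequence of finite subtrees~$T_n$, each of degree bounded by a common constant, attached at sites $x_1 < x_2 < \cdots$ of the backbone, with spacings $L_n := x_{n+1} - x_n$. Since the resulting~$G$ contains a copy of~$\mathbb{N}$, the remarks in the introduction already give us~$\lambda_c^{\mathrm{glob}}(G) \le \lambda_c^{\mathrm{loc}}(G) \le \lambda_c(\mathbb{Z})$. The finite subtrees act as \emph{boosters}: a natural candidate is to take~$T_n$ to be a ball of radius~$r_n$ in a binary tree (so each~$T_n$ has degree at most~$3$), which has metastability time on the order of~$\exp(c(\lambda) r_n)$ for a suitable function $c(\lambda)$ that vanishes near~$\lambda_c^{\mathrm{loc}}(\mathbb{T}^2)$ and grows with~$\lambda$. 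By tuning the pair $(r_n, L_n)_{n \ge 1}$ we can make the transition between ``infection reliably jumps from $T_n$ to $T_{n+1}$'' and ``infection dies out before reaching $T_{n+1}$'' occur at any prescribed $\lambda \in (0,\lambda_c(\mathbb{Z}))$.

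To prove~$\lambda_c^{\mathrm{loc}}(G) \le \lambda$, I fix~$\lambda' > \lambda$ and define a \emph{good event} that, starting from a few infected vertices inside~$T_n$, the process survives long enough on $T_n$ to propagate through the backbone segment of length~$L_n$ and establish itself on~$T_{n+1}$. Using the standard restart/graphical-construction comparison, I dominate the sequence of events $\{T_n \text{ is infected}\}$ from below by a supercritical $1$-dependent site percolation on~$\mathbb{N}$, and deduce local survival. For~$\lambda_c^{\mathrm{glob}}(G) \ge \lambda$, I fix~$\lambda' < \lambda$ and reverse the estimate: the lifetime of the process restricted to a single block $T_n \cup [x_n, x_{n+1}]$ is much smaller than $L_n$, so starting from any finite set the infection cannot reach arbitrarily far boosters and eventually dies out. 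Combining the two inequalities yields $\lambda_c^{\mathrm{glob}}(G) = \lambda_c^{\mathrm{loc}}(G) = \lambda$.

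It remains to arrange the behavior at the critical value~$\lambda$ itself. For part~(a), I would choose~$T_n$ and~$L_n$ so that at the exact rate~$\lambda$ the propagation from $T_n$ to $T_{n+1}$ still succeeds with probabilities bounded below (or at least summing to $+\infty$), so that a Borel--Cantelli plus renewal argument gives local survival of the contact process on~$G$ at rate~$\lambda$. This is the tree analogue of the ``desert-and-oasis'' phenomenon of~\cite{ss2,mss}, adapted to preserve bounded degree by replacing star-like oases by deep balls in a binary tree. For part~(b), I would take~$T_n$ and~$L_n$ on the other side of the threshold: per-booster propagation successes at rate~$\lambda$ have probabilities summable in~$n$, so Borel--Cantelli gives extinction, while for any~$\lambda' > \lambda$ monotonicity and a continuity gain push the per-booster success back above threshold and the process survives. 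In both cases one only needs a finitary continuity statement for finite-system quantities in~$\lambda$, which is easier to obtain than a general continuity statement for~$\lambda_c$.

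The main obstacle, in my view, is the precise quantitative tuning in the last paragraph: one must match the deterministic rates at which~$r_n$ and~$L_n$ grow with the $\lambda$-dependent metastability and propagation times on finite trees, and do so uniformly for all $\lambda \in (0,\lambda_c(\mathbb{Z}))$. A complication is that the exponential rate~$c(\lambda)$ of the metastability time is not available in closed form, so the tuning is likely implemented through implicit sequences chosen to make two opposing quantitative inequalities simultaneously hold. Additionally, while extinction at criticality in~(b) sidesteps the Bezuidenhout--Grimmett machinery because we design~$G$, it still demands a one-sided estimate strong enough that Borel--Cantelli applies at~$\lambda$, which I expect to be the most delicate part of the argument.
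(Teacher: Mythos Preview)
Your overall architecture matches the paper's: the graph is a half-line decorated with finite trees (``oases'') at sparse locations, and the critical rate is pinned down by tuning oasis size against desert length. The paper packages the inductive step as a single augmentation theorem (Theorem~\ref{thm:tool}) and, exactly as you anticipate, defines the desert length~$L(h)$ \emph{implicitly} as the first~$\ell$ at which a crossing probability at rate~$\lambda$ drops below a target level.

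There is, however, one genuine gap in your plan. You propose balls in a \emph{binary} tree as boosters, with metastability time~$\exp(c(\lambda) r_n)$. But the exponential-in-volume metastability of the contact process on balls of~$\mathbb{T}^d$ (the input from~\cite{cmmv} the paper uses) requires~$\lambda > \lambda_c^{\mathrm{loc}}(\mathbb{T}^d)$; for~$\lambda$ below this threshold there is no exponential oasis lifetime and your mechanism breaks down. Since the theorem must cover every~$\lambda \in (0,\lambda_c(\mathbb{Z}))$, including arbitrarily small ones, binary trees are not enough. The paper fixes this by letting the branching number depend on~$\lambda$: it chooses~$d = d_\lambda$ large enough that~$\lambda > \lambda_c^{\mathrm{loc}}(\mathbb{T}^d)$ (and also~$d(1-e^{-\lambda})e^{-2} > 1$), and then all oases are copies of~$\mathbb{T}^d_h$ for that fixed~$d$. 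This is an easy repair, but without it your construction only handles~$\lambda > \lambda_c^{\mathrm{loc}}(\mathbb{T}^2)$.

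Two further points where the paper is sharper than your sketch. First, the separation between~$\lambda$ and~$\lambda' < \lambda$ is not a soft ``continuity gain'' but a strict exponential gap: one needs~$\mathbb{P}_{\lambda'}(\text{cross segment of length }\ell) \le \eta^{-\ell}\,\mathbb{P}_{\lambda}(\text{cross})$ for some~$\eta = \eta_{\lambda',\lambda} > 1$ (Lemma~\ref{lem:line_lambda}), which in turn relies on strict monotonicity of the subcritical decay rate~$\beta(\lambda)$ due to Lalley. This, combined with a lower bound~$L(h) \ge d^{3h/4}$, is what makes the~$\lambda'$-process fail to cross even though the~$\lambda$-process is tuned to succeed. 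Second, for part~(b) the paper does not try to tune directly at~$\lambda$ so that successes are summable; instead it performs the~$n$th augmentation at a rate~$\lambda'_n \searrow \lambda$. Then~$\lambda$ is automatically subcritical for every step (by the ``not-pass'' inequality), while any fixed~$\lambda' > \lambda$ eventually exceeds~$\lambda'_n$ and the ``pass'' inequalities apply from that stage on. This sidesteps the delicate simultaneous tuning you flag as the main obstacle.
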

	Together with \cite{ss2}, the above theorem provides a full answer to the question of which values $\lambda > 0$ can occur simultaneously as $\lambda_c^\mathrm{loc}(G)$ and $\lambda_c^\mathrm{glob}(G)$ for some locally finite connected graph $G$. Indeed the case $\lambda = \lambda_c(\mathbb{Z})$ was covered in \cite{ss2} and, as noted in Comment 2.\  above,  for every infinite graph $G$ one has~$\lambda_c(G) \leq \lambda_c(\mathbb{N}) = \lambda_c(\mathbb{Z})$.
	 
	Although the construction we give here is very similar to the one in~\cite{ss2} (and~\cite{mss}) mentioned above, it has novel aspects that free us from being restricted to having~$\lambda_c(\mathbb{Z})$ as the critical rate. In essence, the graph we construct consists of an infinite half-line to which we append, in very sparse locations (say,~$a_1 \ll \cdots \ll a_i \ll \cdots$), regular trees with large (but fixed) degree, truncated at height~$h_i$. In the terms of the aforementioned examples of~\cite{mss} and~\cite{ss2}, the half-line is the ``desert'' and the trees are the ``oases''. This means that, for~$\lambda$ within a certain controlled range (inside the interval~$(0,\lambda_c(\mathbb{Z}))$), the contact process stays active for a very long time in the trees, but is very unlikely to cross the line segments in between them in any single attempt. The locations and heights are chosen in a way that is increasingly sensitive to the value of~$\lambda$, so that a certain target value can be guaranteed to be critical for global and local survival.
	
We should mention that, in case one does not insist in obtaining graphs of bounded degree in the statement of Theorem~\ref{thm:main}, then the oasis structures could be taken as stars of increasing degree instead of trees of increasing height~\footnote{Some care must be taken before one considers the contact process on graphs with unbounded degrees. The standard construction, as described in \cite{lig1} and \cite{lig2}, is valid on graphs with bounded degree; in the absence of this condition, the assumptions required to define a Feller semi-group from a pre-generator may fail to hold. However, for the alternate graph outlined here (which alternates line segments of increasing length with stars of increasing degree), and the contact process started from configurations with finitely many infected vertices, there are no problems with constructing the process. Indeed, in that case it can be constructed more simply as a continuous-time Markov chain on a countable state space (namely, the collection of finite sets of vertices), and the structure of the graph makes it easy to see that explosion does not occur (by considering the time it takes for the infection to cross each line segment). Since the alternate graph is not the focus of this work, we omit further details.}. Taking stars rather than trees would indeed simplify some of our proofs somewhat. Moreover, even keeping degrees bounded, other structures would also work instead of trees, such as high-dimensional hypercubes. We have chosen to use trees because some estimates and coupling results were readily available for the contact  process on trees in the reference~\cite{cmmv}.

\subsection{Open questions}

Let us first mention that we believe the ideas we develop in this paper allow for graph constructions that lead to replacing the union in~\eqref{eq:gaps} by the full interval~$(0,\lambda_c(\mathbb{Z}))$, but we do not work out the details here. 

We will now briefly discuss questions that we consider interesting and that would be further developments to our result.

 	\textbf{Question 1.} Can one construct a locally finite connected graph $G$ for which~$\lambda_c^\mathrm{glob}(G) = \lambda_c^\mathrm{loc}(G)$ and the contact process on $G$ dies out locally but survives globally?
 
 	\textbf{Question 2.} What is the set of pairs $(\lambda_1,\lambda_2) \in [0,\lambda_c(\mathbb{Z})]^2$ that can occur as~$(\lambda_c^\mathrm{glob}(G),\lambda_c^\mathrm{loc}(G))$ for some graph~$G$?
	
	\textbf{Question 3.} Fix any (finite or infinite) sequence of values~$0 < \lambda_1 < \lambda_2 < \cdots < \lambda_c(\mathbb{Z})$. Is there a graph~$G$ for which the function~$\lambda \mapsto \zeta_{G,\lambda}(A)$ (for any~$A$) is discontinuous at~$\lambda_i$ for each~$i$? It is conceivable that, by glueing together graphs obtained from Theorem~\ref{thm:main}, each with a different critical value, one would find and affirmative answer to this question. See the proof of Theorem~3.2.1 in \cite{ss2} for an instance where glueing graphs can produce this kind of discontinuity.


\subsection{Organization of the paper}

In the rest of this introduction, we explain the notation we use and the graphical construction of the contact process. In Section~\ref{s:proof_of_main}, we state Theorem~\ref{thm:tool}, which allows us to augment  graphs in a way that is favorable for the contact process with rate~$\lambda$ and unfavorable for the process with rate~$\lambda' < \lambda$, where~$\lambda$ is some prescribed infection rate. Using this theorem, we give in that section the proof of Theorem~\ref{thm:main}; the remainder of the paper is dedicated to the proof of Theorem~\ref{thm:tool}. Section~\ref{s:line_tree} gathers some preliminary results about the contact process on line segments and trees. Section~\ref{sec:proof_thm} contains the key definitions of our graph augmentation construction, and states key results (Propositions~\ref{prop:ignition},~\ref{prop:time_spent} and~\ref{prop:no_go}), which together readily give the proof of Theorem~\ref{sec:proof_thm}. Section~\ref{s:proofs_of_props} and the appendix are more technical and contain the proofs of the three key propositions (as well as several auxiliary results).

\subsection{Notation, graphical construction}
\label{ss:graphical}

Let us first detail the notation we use for graphs. Let~$G = (V,E)$ be an unoriented graph with set of vertices~$V$ and set of edges~$E$. We say two vertices are neighbors if there is an edge containing both. The degree of a vertex~$v$, denotes~$\mathrm{deg}_G(v)$, is the number of neighbors of~$v$. All graphs we consider are locally finite, meaning that all their vertices have finite degree. Finally, graph distance in~$G$ between vertices~$u$ and~$v$ is denoted~$\mathrm{dist}_G(u,v)$.

Next, we recall the graphical construction of the contact process. Here we consider a standard monotone coupling of contact processes on the same graph with different infection rates. This is implemented by endowing transmission arrows with numerical labels, as we now explain. Fix a graph~$G$ and also~$\lambda > 0$. We take a family of independent Poisson point processes:
\begin{itemize}
	\item for each~$v \in V$, a Poisson point process~$D^v$ on~$[0,\infty)$ with intensity equal to Lebesgue measure; if~$t \in D^{u}$, we say there is a recovery mark at~$u$ at time~$t$;
	\item for each ordered pair~$(u,v) \in V^2$ such that~$\{u,v\} \in E$, a Poisson process~$D^{(u,v)}$ on~$[0,\infty)^2$ with intensity equal to Lebesgue measure; if~$(t,\ell) \in D^{(u,v)}$, we say there is a transmission arrow with label~$\ell$ at time~$t$ from~$u$ to~$v$.
	\end{itemize}
Given~$\lambda > 0$ and~$u,v \in V$ and~$0 \le s < t$, a~$\lambda$-infection path from~$(u,s)$ to~$(v,t)$ is a right-continuous function~$\gamma: [s,t] \to V$ satisfying~$\gamma(s) =u$,~$\gamma(t) = v$,
\begin{align*} & r \notin D^{\gamma(r)}  \text{ for all } r,\text{ and}\\[.2cm] & \text{whenever } \gamma(r-) \neq \gamma(r) \text{ there is } \ell \le \lambda \text{ such that } (r,\ell) \in D^{(\gamma(r-),\gamma(r))}.\end{align*}
That is, a~$\lambda$-infection path cannot touch recovery marks and can traverse transmission arrows with label smaller than or equal to~$\lambda$.

In most places, the value of~$\lambda$ will be clear from the context, so we simply speak of infection paths rather than~$\lambda$-infection paths. We write~$(u,s) \stackrel{\lambda}{\rightsquigarrow} (v,t)$ (sometimes omitting~$\lambda$) either if~$(u,s) = (v,t)$ or if there is a~$\lambda$-infection path from~$(u,s)$ to~$(v,t)$. More generally, for~$S_1,S_2 \subset V \times [0,\infty)$, we write~$S_1 \rightsquigarrow S_2$ if there is an infection path from~$(u,s)$ to~$(v,t)$, for some~$(u,s) \in S_1,~(v,t) \in S_2$ (we write~$S \rightsquigarrow (v,t)$ instead of~$S \rightsquigarrow \{(v,t)\}$, and similarly for~$(u,s) \rightsquigarrow S$). Given~$A \subset V$, setting
$$\CP[A]{G}{\lambda}{t}(v):= \mathds{1}\{A \times \{0\} \stackrel{\lambda}{\rightsquigarrow} (v,t)\},\quad t \ge 0,\; v \in V, $$
where~$\mathds{1}$ denotes the indicator function, we obtain that~$\CP[A]{G}{\lambda}{t}$ is a contact process with parameter~$\lambda$, started with vertices in~$A$ infected and vertices in~$V \backslash A$ healthy. Note that this construction readily gives the monotone relation
$$A \subset A',~G \text{ subgraph of }G',~\lambda  \le \lambda' \quad \Longrightarrow \quad \CP[A]{G}{\lambda}{t} \leq \CP[A']{G'}{\lambda'}{t},\; t \ge 0.$$
In case we are considering the contact process~$(\CP[A]{G}{\lambda}{t}:t \ge 0)$ on a graph~$G$ and~$G'$ is a subgraph of~$G$, we sometimes refer to~$(\CP[A]{G'}{\lambda}{t}:t \ge 0)$ as the process confined to~$G'$.

Finally, we write
$$\barCP[A]{G}{\lambda}(v):= \int_0^\infty \CP[A]{G}{\lambda}{t}(v)\;\mathrm{d}t,\quad v \in V,$$
that is,~$\barCP[A]{G}{\lambda}(v)$ is the total amount of time that~$v$ is infected in~$\left(\CP[A]{G}{\lambda}{t}:t \ge 0\right)$.

\section{Proof of main result}\label{s:proof_of_main}
Our graph construction will be given by recursively applying a graph augmentation procedure, with each step taking as input a rooted graph (a tree with bounded degree) and a prescribed value of the infection rate. The result that allows us to take each step is the following.
\begin{theorem}\label{thm:tool}
For any~$\lambda\in (0,\lambda_c(\mathbb{Z}))$ there exist~$c_\lambda > 0$ and~$d =d_\lambda \in \mathbb{N}$ satisfying the following. Let~$(G,o) = ((V,E),o)$ be a rooted tree with degrees bounded by~$d+1$, and~$\deg_G(o) = 1$. Then, there exists~$\mathcal{H} = \mathcal{H}((G,o),\lambda) \in \mathbb{N}$ such that for any~$h \ge \mathcal{H}$, there exists a rooted tree~$(\tilde{G},\tilde{o}) = (\tilde{G}_h,\tilde{o}_h)$ with vertices~$\tilde{V}$ and edges $\tilde{E}$ having~$G$ as a subgraph, with degrees satisfying
	\begin{align*}
	&\deg_{\tilde{G}}(v) = \deg_{G}(v)  \text{ for all } v \in V\backslash\{o\},\\
	&\deg_{\tilde{G}}(o) = 2,\quad\deg_{\tilde{G}}(\tilde{o}) = 1,\\ &\deg_{\tilde{G}}(v) \leq d+1\; \text{ for all } v \in \tilde{V}\backslash V,
	\end{align*}
	and such that the contact process on~$\tilde{G}$ satisfies the following properties.
	For all~$\lambda' \ge \lambda$,~$A \subset V$ and~$t > 0$,
	\begin{equation}
	\label{eq:pass}\mathbb{P}\left(\barCP[A]{\tilde{G}}{\lambda'}(\tilde{o}) > h \mid \barCP[A]{G}{\lambda'}(o) > t \right) > 1 - \exp\{-c_\lambda \cdot  t\} - \frac{1}{h},
	\end{equation}
	and, for all~$v \in V$,
	\begin{equation}
	\label{eq:pass}\mathbb{P}\left( \barCP[A]{\tilde{G}}{\lambda'}(v) > h  \mid \barCP[A]{G}{\lambda'}(o) > t \right) > 1 - \exp\{-c_\lambda \cdot  t\} - \frac{1}{h}.
	\end{equation}
	Moreover, for all~$\lambda' < \lambda$ there exists $\mathcal{H}' = \mathcal{H}'((G,o),\lambda, \lambda')$ such that
	\begin{equation}
	\label{eq:not_pass}\mathbb{P}\left(\barCP[A]{\tilde{G}}{\lambda'}(\tilde{o}) > 0 \right) < \exp \left\{ -d^{\sqrt{h}}\right\} \quad \text {for any } A \subset V \text{ and } h \geq \mathcal{H}'.
	\end{equation}
\end{theorem}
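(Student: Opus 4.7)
The plan. I would realize $\tilde G$ by attaching at the leaf $o$ a single new ``block'' with three components: a single new edge from $o$ to a vertex $q$; a truncated $(d+1)$-regular tree $\mathcal{T}_h$ of height $h$ rooted at $q$ (the ``oasis'', in which $q$ carries $d-1$ tree-children and every internal tree-vertex carries $d$ children, so $\deg_{\tilde G}(q)=d+1$); and a long path $q - y_1 - \cdots - y_L = \tilde o$ of length $L = L_\lambda(h)$ (the ``desert''). I choose $d=d_\lambda$ so large that $\lambda_c^{\mathrm{loc}}(\mathbb{T}^d) < \lambda$, which is possible since $\lambda_c^{\mathrm{loc}}(\mathbb{T}^d)\to 0$ as $d\to\infty$ (cf.~\cite{pem_tree}). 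With this choice, the oasis at any $\lambda'\ge\lambda$ sits comfortably in the locally supercritical regime for the ambient $(d+1)$-regular tree, while the desert at any $\lambda'<\lambda_c(\mathbb{Z})$ is subcritical on a line.

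For the favorable inequalities~\eqref{eq:pass}, I would proceed in three stages matching the three propositions previewed in the outline. First, \emph{ignition} (Proposition~\ref{prop:ignition}): conditional on $\{\barCP[A]{G}{\lambda'}(o)>t\}$, by monotonicity $o$ is infected in $\tilde G$ for total time $>t$, and I partition this infected time into $\Theta(t)$ disjoint unit-length sub-intervals; in each one, the event that the new transmission arrow $o\to q$ fires (with label $\le\lambda$) and successfully initiates a non-trivial infection in $\mathcal{T}_h$ has probability bounded below by some $p_\lambda>0$, uniform in $\lambda'\ge\lambda$, so independence across sub-intervals gives failure probability $\le e^{-c_\lambda t}$. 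Next, \emph{reservoir} (Proposition~\ref{prop:time_spent}): once $q$ is infected, the survival estimates for the contact process on truncated regular trees from~\cite{cmmv} imply that the process confined to $\mathcal{T}_h$ keeps $q$ infected for time at least $\exp(c(\lambda)d^h)$ (in particular $\gg h$) with probability at least $1-1/h$. Finally, \emph{propagation}: each unit of infected time at $q$ produces a successful $\lambda'$-crossing of the desert to $\tilde o$ with probability at least $e^{-\alpha(\lambda)L}$, where $\alpha(\lambda)>0$ is the exponential-decay rate of subcritical crossings on $\mathbb{Z}$ at rate $\lambda$; integrating over the reservoir lifetime yields $\barCP[A]{\tilde G}{\lambda'}(\tilde o)>h$ with the required probability. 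A symmetric argument, sending the infection from $q$ back through $o$ into $V$, delivers the corresponding bound at each $v\in V$.

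For the no-go estimate~\eqref{eq:not_pass} (Proposition~\ref{prop:no_go}), I would bound, via a union bound over all possible crossings, the probability that any $\lambda'$-infection path from $V$ reaches $\tilde o$. Any such path must traverse $o\to q$ and then the entire desert. The oasis can sustain $q$ for time at most $\exp(c(\lambda')d^h)$ (in the worst case $\lambda'>\lambda_c^{\mathrm{loc}}(\mathbb{T}^d)$), with $c(\lambda')<c(\lambda)$ \emph{strictly}, while each unit of infected time at $q$ yields a desert crossing with probability at most $e^{-\alpha(\lambda')L}$, with $\alpha(\lambda')>\alpha(\lambda)$ strictly (because $\lambda'<\lambda$). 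Together,
\[
\mathbb{P}\bigl(\barCP[A]{\tilde G}{\lambda'}(\tilde o)>0\bigr) \;\le\; \exp\bigl(c(\lambda')d^h - \alpha(\lambda')L\bigr).
\]
Setting $L_\lambda(h):=\bigl(c(\lambda)/\alpha(\lambda)-1/h\bigr)\,d^h$ and invoking the strict inequality $c(\lambda')/\alpha(\lambda')<c(\lambda)/\alpha(\lambda)$ (valid for every $\lambda'<\lambda$), the exponent is negative of order $-|\beta(\lambda,\lambda')|\,d^h$ for some $|\beta|>0$, hence is $\le -d^{\sqrt h}$ as soon as $h\ge \mathcal{H}'(\lambda,\lambda')$.

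The hard part will be the tight calibration of $L_\lambda(h)$: the same length must be short enough (relative to $(c(\lambda)/\alpha(\lambda))\,d^h$) for the reservoir-driven crossings in the propagation step to succeed at rate $\lambda$, yet long enough to defeat the $\lambda'$-reservoir for every $\lambda'<\lambda$. This rests on the strict monotonicity in $\lambda$ of the ratio $c(\lambda)/\alpha(\lambda)$, which is what permits a single $L(h)$ (independent of $\lambda'$) to land in the right window for every fixed $\lambda'<\lambda$ once $h\ge\mathcal{H}'(\lambda,\lambda')$ is large enough---and why $\mathcal{H}'$ must depend on $\lambda'$, since the gap between $c(\lambda')/\alpha(\lambda')$ and $c(\lambda)/\alpha(\lambda)$ shrinks as $\lambda'\uparrow\lambda$. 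Establishing the quantitative truncated-tree survival bounds underlying this monotonicity---using~\cite{cmmv} and standard subcritical exponential-decay estimates on $\mathbb{Z}$---is the technical crux of the proof.
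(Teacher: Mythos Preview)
Your overall architecture (oasis $=$ truncated tree, desert $=$ line segment, ignition/reservoir/propagation decomposition) matches the paper's, and your ignition argument is essentially Proposition~\ref{prop:ignition}. The genuine gap is your calibration of the desert length. You set
\[
L_\lambda(h)=\bigl(c(\lambda)/\alpha(\lambda)-1/h\bigr)\,d^h
\]
and rely on the inequality $c(\lambda')/\alpha(\lambda')<c(\lambda)/\alpha(\lambda)$. This requires you to use the \emph{same} constant $c(\lambda)$ as a lower bound on the tree survival exponent (for the propagation step at rate $\lambda$) and as an upper bound (for the no-go step at rate $\lambda'$), and moreover to know these bounds to within $O(1/h)$ of the sharp limit. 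The reference~\cite{cmmv} gives only that the limit \eqref{eq:background_tree1} exists and is positive; the working constant $c_{\mathbb{T}}$ in Proposition~\ref{basic_trees} is taken strictly below $c_1/4$, so your propagation step would need $c_{\mathbb{T}}>c(\lambda)-\alpha(\lambda)/h$, which fails for large $h$. In short, the window you are trying to hit with an explicit formula is narrower than the precision of the available constants.

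The paper avoids this entirely by defining $L(h)$ \emph{implicitly}: it sets $L(h)$ to be the first $\ell$ at which the probability $\mathcal{P}(\ell)$ that the process started from $V\cup\mathcal{T}_h$ fully occupied ever reaches $v_\ell$ drops below $1-\mathcal{s}(h)^{-1}$, where $\mathcal{s}(h)=\exp\{d^{\sqrt h}\}$. This automatically places $L(h)$ in the correct window without any knowledge of $c(\lambda)$. The payoff is Lemma~\ref{lem:prob_crossing}, which yields $\mathcal{p}_\lambda(L(h))\le \mathcal{s}(h)^3/S(h)$ with $S(h)$ the expected extinction time on $\tilde G$. In the no-go bound one then writes, for $\lambda'<\lambda$,
\[
\mathbb{P}_{\lambda'}(B)\;\lesssim\; \mathcal{s}(h)\,S(h)\cdot \mathcal{p}_{\lambda'}(L(h))\;\le\; \mathcal{s}(h)\,S(h)\cdot \eta_{\lambda,\lambda'}^{-L(h)}\cdot \mathcal{p}_\lambda(L(h))\;\le\; \mathcal{s}(h)^4\cdot \eta_{\lambda,\lambda'}^{-L(h)},
\]
so that $S(h)$ cancels and one only needs the strict monotonicity of the \emph{line} crossing exponent (Lalley's result, Lemma~\ref{lem:line_lambda}) together with the crude lower bound $L(h)\ge d^{3h/4}$. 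The paper explicitly flags this implicit definition as the new idea relative to the desert-and-oasis construction of~\cite{mss,ss2}; your proposal reverts to an explicit choice and thereby reintroduces exactly the difficulty the paper is designed to circumvent.
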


\begin{proof}[Proof of Theorem~\ref{thm:main}(a)]
Given a rooted tree~$(G,o)$ and~$\lambda > 0$, for each~$h \ge \mathcal{H}((G,o),\lambda)$, we denote by~$\mathcal{G}_h((G,o),\lambda)$ the rooted graph~$(\tilde{G},\tilde{o})$ corresponding to~$(G,o),\lambda,h$ as in Theorem~\ref{thm:tool}.

Fix~$\lambda \in (0,\lambda_c(\mathbb{Z}))$. Also fix an increasing sequence~$(\lambda'_n)$ with~$\lambda'_n \nearrow \lambda$. We will define an increasing sequence of graphs~$(G_n)$ by applying Theorem~\ref{thm:tool} repeatedly. We let~$G_0$ be a graph consisting of a single vertex (its root),~$o_0$. Once~$(G_n,o_n)$ is defined, fix
\begin{equation}\label{eq:choice_h1}h_{n+1} \ge \max\left(\mathcal{H}((G_n,o_n),\lambda),\; c_\lambda^{-1} (n+3) \log2,\;2^{n+3}\right)\end{equation}
and let~$(G_{n+1},o_{n+1}) := \mathcal{G}_{h_{n+1}}((G_n,o_n),\lambda)$. Increasing~$h_{n+1}$ if necessary, by~\eqref{eq:not_pass} we can also assume that
\begin{equation}\label{eq:no_pass1}\mathbb{P}\left(\barCP[A]{G_{n+1}}{\lambda_{n+1}'}(o_{n+1}) = 0 \right)> 1-\frac{1}{n}\quad  \text{ for any }A \subset G_n.\end{equation}

Note that $(G_n)_{n \in \mathbb{N}} = (V_n, E_n)_{n \in \mathbb{N}}$ is an increasing sequence of graphs in the sense that both $(V_n)_{n \in \mathbb{N}}$ and $(E_n)_{n \in \mathbb{N}}$ are increasing sequences of sets. Therefore we can define~$G_\infty = (V_\infty,E_\infty)$ where $V_\infty = \cup V_n$ and $E_\infty = \cup E_n$.

We will now show that~$G_\infty$ has the desired properties. Since each~$G_n$ is a tree,~$G_\infty$ is also a tree. The fact that~$G_\infty$ has bounded degree is  an immediate consequence of the degree conditions given in the end of the statement of Theorem~\ref{thm:tool}.

Let us verify that the contact process  with parameter~$\lambda$ on~$G$ survives locally. Start noting that
$$\mathbb{P}\left(\barCP[\{o_0\}]{G_0}{\lambda}(o_0) > c_\lambda^{-1} \log 4  \right) = 4^{-c_\lambda^{-1}}.$$
Next, using~\eqref{eq:pass} and~\eqref{eq:choice_h1},
$$\mathbb{P}\left(\barCP[\{o_0\}]{G_1}{\lambda}(o_1) > h_1\mid \barCP[\{o_0\}]{G_0}{\lambda}(o_0) > c_\lambda^{-1} \log 4 \right) > 1 - \frac14 - \frac{1}{h_1} \ge \frac12$$
and, for~$n \ge 1$,
$$
\mathbb{P}\left(\barCP[\{o_0\}]{G_{n+1}}{\lambda}(o_{n+1}) > h_{n+1} \mid \barCP[\{o_0\}]{G_{n}}{\lambda}(o_{n}) > h_{n}  \right) > 1 - \exp\{-c_\lambda h_n\} - \frac{1}{h_{n+1}} > 1 - \frac{1}{2^{n+1}}
$$
and similarly,
$$
\mathbb{P}\left(\barCP[\{o_0\}]{G_{n+1}}{\lambda}(o_{0}) > h_{n+1} \mid \barCP[\{o_0\}]{G_{n}}{\lambda}(o_{n}) > h_{n}  \right) >1 - \frac{1}{2^{n+1}}.
$$
From this, it follows that
$$\mathbb{P}\left(\barCP[\{o_0\}]{G}{\lambda}(o_0) = \infty \right) > 0,$$
so we have local survival at $\lambda$.

Now fix~$\lambda' < \lambda$; let us prove  that the contact process on~$G$ with parameter~$\lambda'$ dies out globally. By our construction of the graph $G$, survival of the infection implies in eventually infecting every $o_n$. However, by~\eqref{eq:no_pass1}, we have for any~$n$ such that~$\lambda_n' > \lambda'$,
$$\mathbb{P}\left(\barCP[\{o_0\}]{G_{n}}{\lambda'}(o_{n}) = 0 \right) \ge \mathbb{P}\left(\barCP[\{o_0\}]{G_{n}}{\lambda_n'}(o_{n}) = 0 \right) \ge 1 - n^{-1}.$$
It follows that $o_n$ is never infected with high probability and that the process hence dies out globally. Since this holds for every $\lambda' < \lambda$ we conclude that $\lambda_c^\mathrm{glob}(G) = \lambda_c^\mathrm{loc}(G)= \lambda$.
\end{proof}
\begin{proof}[Proof of Theorem~\ref{thm:main}(b)]
We fix~$\lambda \in (0,\lambda_c(\mathbb{Z}))$ and again we will define an increasing sequence of graphs~$(G_n)$ by applying Theorem~\ref{thm:tool} repeatedly. Only now we take a decreasing sequence~$(\lambda'_n)$ with~$\lambda'_n \searrow \lambda$.  Like before we let~$G_0$ be a graph consisting of a single vertex (its root),~$o_0$ and, once~$(G_n,o_n)$ is defined, fix
\begin{equation}\label{eq:choice_h2}h_{n+1} \ge \max\left(\mathcal{H}((G_n,o_n),\lambda'_{n+1}),\; c_{\lambda_{n+2}} (n+3) \log2,\;2^{n+3}\right)
\end{equation}
and let~$(G_{n+1},o_{n+1}) := \mathcal{G}_{h_{n+1}}((G_n,o_n),\lambda'_{n+1})$. Since~$\lambda < \lambda_{n+1}$, increasing~$h_{n+1}$ if necessary, by~\eqref{eq:not_pass} we can assume that
\begin{equation}\label{eq:no_pass2}\mathbb{P}\left(\barCP[A]{G_{n+1}}{\lambda}(o_{n+1}) = 0 \right)> 1-\frac{1}{n} \quad  \text{ for any }A \subset G_n.\end{equation}
We then let~$G_\infty$ be the the limiting graph of the sequence $(G_n)_{n \in \mathbb{N}}$, as in (a). From this it follows that $G_\infty$ is a bounded degree tree.

The fact that the contact process with parameter $\lambda$ on $G_\infty$ dies out globally follows similarly to the last argument in the previous proof. Using \eqref{eq:no_pass2} gives
$$\mathbb{P}\left(\barCP[\{o_0\}]{G_{\infty}}{\lambda}(o_{n}) \neq 0 \right) = \mathbb{P}\left( \barCP[\{o_0\}]{G_n}{\lambda}(o_{n}) \neq 0 \right) \leq n^{-1} \quad \forall n.$$
The conclusion follows as in \ref{thm:main}(a).

Now, fix~$\lambda' > \lambda$, and take~$n$ such that~$\lambda'_n < \lambda'$. We then note that the event
$$\left\{\barCP[\{o_0\}]{G_n}{\lambda'_n}(o_n) > h_n\right\}$$
has positive probability, and that, for each~$N > n$, by~\eqref{eq:pass} and~\eqref{eq:choice_h2},
$$\mathbb{P}\left(\barCP[\{o_0\}]{G_N}{\lambda'_N}(o_N) > h_N \mid \barCP[\{o_0\}]{G_{N-1}}{\lambda'_n}(o_{N-1})> h_{N-1} \right) > 1 - \frac{1}{2^{N}},$$
and
$$\mathbb{P}\left(\barCP[\{o_0\}]{G_N}{\lambda'_N}(o_0) > h_N \mid \barCP[\{o_0\}]{G_{N-1}}{\lambda'_n}(o_{N-1}) > h_{N-1}\right) > 1 - \frac{1}{2^{N}}.$$
From this, local survival at parameter~$\lambda'$ follows as in part~(a).
\end{proof}

\section{Estimates for line segments and trees}
\label{s:line_tree}

This section is devoted to listing bounds for the behavior of the contact process on finite  trees and line segments which will be useful for our graph construction. 

Let us first mention two results that hold on general graphs. First, if~$G = (V,E)$ is a connected graph and~$x,y \in V$ and we let $\mathrm{dist}_G(x,y)$ denote the graph distance between $x$ and $y$ in $G$, we have
\begin{equation}\label{eq:lower_bound_reach}
\P \left(\CP[\{x\}]{G}{\lambda}{t}(y) = 1 \text{ for some } t \leq \mathrm{dist}_G(x,y) \right) \geq \left( e^{-2}(1-e^{-\lambda}) \right)^{\mathrm{dist}_G(x,y)}.
\end{equation} 
This is obtained by fixing a geodesic~$v_0 = x,\;v_1,\ldots, v_n=y$ (with~$n = \mathrm{dist}_G(x,y)$) and prescribing that, in each time interval~$[i,i+1]$ with~$0 \le i \le n-1$, there is no recovery mark at~$v_i$ or~$v_{i+1}$, and there is a transmission arrow from~$v_i$ to~$v_{i+1}$.

Second, we have the following inequality for the extinction time of the contact process on~$G$ started from full occupancy.
\begin{lemma}
\label{lem:inv_markov} 
For every $s > 0$, we have
\begin{equation}\label{eq:ud_markov}
\mathbb{P}\left( \CP[G]{G}{\lambda}{s} = \varnothing \right)  \leq \frac{s}{\mathbb{E}\left[\inf\left\{t: \CP[G]{G}{\lambda}{t} = \varnothing \right\} \right]}.
\end{equation}
\end{lemma}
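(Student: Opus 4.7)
My plan is to deduce this from a submultiplicativity property of the survival function from full occupancy, and then convert that bound into the desired inequality via the layer-cake formula for the expectation. Set $\tau := \inf\{t : \CP[G]{G}{\lambda}{t} = \varnothing\}$ for the extinction time starting from the fully infected configuration, and let $f(s) := \mathbb{P}(\tau > s) = 1 - \mathbb{P}(\CP[G]{G}{\lambda}{s} = \varnothing)$. The target is then $1 - f(s) \le s/\mathbb{E}[\tau]$.

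First I would establish the submultiplicative bound
\begin{equation*}
f(s+t) \le f(s)\, f(t), \qquad s,t > 0.
\end{equation*}
To this end, I condition on the $\sigma$-algebra $\mathcal{F}_s$ generated by the Poisson data of the graphical construction up to time $s$. On the event $\{\tau > s\}$, the configuration $A_s := \CP[G]{G}{\lambda}{s}$ is a (random) non-empty subset of $V$. By the Markov property built into the graphical construction (the Poisson data on $[s,\infty)$ is independent of $\mathcal{F}_s$ and has the law of a fresh copy shifted by $s$), the conditional probability of $\{\tau > s+t\}$ given $\mathcal{F}_s$ equals the probability that the contact process on $G$ started from $A_s$ is still non-empty at time $t$. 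Since $A_s \subset V$, the monotonicity property from Section~\ref{ss:graphical} bounds this by $f(t)$; integrating over $\{\tau > s\}$ gives the displayed inequality.

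Induction then yields $f(ns) \le f(s)^n$ for every $n \ge 1$. Since $f$ is non-increasing, I would split the layer-cake representation of $\mathbb{E}[\tau]$ into blocks of length $s$:
\begin{equation*}
\mathbb{E}[\tau] \;=\; \int_0^\infty f(u)\, du \;=\; \sum_{n=0}^\infty \int_{ns}^{(n+1)s} f(u)\, du \;\le\; s \sum_{n=0}^\infty f(ns) \;\le\; s \sum_{n=0}^\infty f(s)^n \;=\; \frac{s}{1-f(s)},
\end{equation*}
which rearranges to the claimed bound. The degenerate case $f(s) = 1$ is consistent: the geometric series diverges, forcing $\mathbb{E}[\tau] = \infty$, so both sides of the inequality vanish.

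The only substantive ingredient is the submultiplicativity step; once that is in hand, the rest is a short deterministic manipulation. I do not anticipate any serious obstacle, since the coupling argument from full occupancy is a direct application of the Markov property together with the monotonicity in the initial condition that the graphical construction automatically provides.
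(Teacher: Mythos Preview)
Your proof is correct and is essentially the same argument the paper sketches: the paper phrases the key step as the stochastic domination $\tau \preceq sX$ with $X$ geometric of parameter $\mathbb{P}(\xi^G_{G,\lambda;s}=\varnothing)$ (referring to \cite{mmvy} for details), which is exactly your submultiplicativity $f(ns)\le f(s)^n$ obtained from the Markov property plus monotonicity in the initial configuration, and taking expectations of that domination is your layer-cake computation. One minor remark: in the degenerate case $f(s)=1$ your claim that the diverging series ``forces $\mathbb{E}[\tau]=\infty$'' is not actually what the computation shows, but the inequality is trivial there anyway since the left-hand side vanishes.
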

This follows from noting that for any~$s$, the extinction of the process started  from full occupancy is stochastically dominated by the random variable $sX$, where $X$ has geometric distribution with parameter $\P(\CP[G]{G}{\lambda}{t} = \varnothing)$. See Lemma 4.5 in~\cite{mmvy} for a full proof.

\subsection{Contact process on line segments}
We will need some estimates involving the contact process on half-lines and line segments. From now on, we fix~$\lambda < \lambda_c(\mathbb{Z})$. The results below are essentially all consequences of the exponential bound
\begin{equation}\label{eq:exp_decay_time}
\P \left( \CP[\{0\}]{\Z}{\lambda}{t} \neq \varnothing \right) \leq \exp\{-c_\lambda \cdot t\},\quad t \ge 0
\end{equation}
for some~$c_\lambda > 0$; see Theorem~2.48 in Part~I of~\cite{lig2}. By simple stochastic comparison considerations and large deviation estimates for Poisson random variables, this also implies that
\begin{equation}\label{eq:crossing}
\P\left(\CP[\{0\}]{\Z}{\lambda}{t} = \varnothing,\; \bigcup_{s \le t}\CP[\{0\}]{\Z}{\lambda}{s} \subset [-t,t] \right) > 1-\exp\{-c_\lambda'\cdot  t\},\quad t \ge 0
\end{equation}
for some~$c_\lambda' > 0$.

For each~$\ell \in \mathbb{N}$, let~$\mathbb{L}_\ell$ denote the subgraph of~$\mathbb{Z}$ induced by the vertex set~$\{0,\ldots, \ell\}$. The following result is an immediate consequence of~\eqref{eq:exp_decay_time}, so we omit its proof.
\begin{lemma}\label{lem:more_than_log}
We have
\begin{equation}\label{eq:more_than_log}\lim_{\ell \to \infty} \mathbb{P}\left(\CP[\L_\ell]{\L_\ell}{\lambda}{(\log(\ell))^2} \neq \varnothing \right) = 0.\end{equation}
	\end{lemma}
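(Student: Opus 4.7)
The plan is to reduce the assertion for the segment $\mathbb{L}_\ell$ to the corresponding assertion on $\mathbb{Z}$ (which is controlled by \eqref{eq:exp_decay_time}), and then absorb a polynomial factor coming from the size of the initial set into the super-polynomial decay in $t = (\log \ell)^2$.

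First, I would invoke the monotonicity built into the graphical construction: since $\mathbb{L}_\ell$ is a subgraph of $\mathbb{Z}$ and the initial set $\mathbb{L}_\ell$ is the same, we have the domination
$$\CP[\L_\ell]{\L_\ell}{\lambda}{t} \subseteq \CP[\L_\ell]{\Z}{\lambda}{t} \qquad \text{for all } t \geq 0,$$
which reduces the problem to bounding $\mathbb{P}(\CP[\L_\ell]{\Z}{\lambda}{t} \neq \varnothing)$. Then, writing the initial condition as a union of singletons and using additivity of the contact process (on the graphical construction, $\CP[\L_\ell]{\Z}{\lambda}{t} = \bigcup_{x \in \L_\ell} \CP[\{x\}]{\Z}{\lambda}{t}$), followed by a union bound and translation invariance, together with the exponential bound \eqref{eq:exp_decay_time}, yields
$$\mathbb{P}\bigl(\CP[\L_\ell]{\Z}{\lambda}{t} \neq \varnothing\bigr) \leq \sum_{x \in \L_\ell} \mathbb{P}\bigl(\CP[\{x\}]{\Z}{\lambda}{t} \neq \varnothing\bigr) \leq (\ell+1)\, e^{-c_\lambda t}.$$

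Finally, I would substitute $t = (\log \ell)^2$ to obtain the bound
$$(\ell+1)\, e^{-c_\lambda (\log \ell)^2} = (\ell+1)\, \ell^{-c_\lambda \log \ell},$$
which tends to $0$ as $\ell \to \infty$ because $(\log \ell)^2$ dominates $\log \ell$, giving super-polynomial decay that absorbs the linear prefactor. There is no real obstacle here; the only substantive point to make explicit is the passage from the segment to $\mathbb{Z}$ via graphical monotonicity, after which the bound is mechanical.
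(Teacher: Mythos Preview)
Your argument is correct and is exactly the ``immediate consequence of~\eqref{eq:exp_decay_time}'' that the paper alludes to (the paper omits the proof). The reduction via monotonicity to $\mathbb{Z}$, the union bound over the $\ell+1$ starting points, and the substitution $t=(\log\ell)^2$ are precisely the intended steps.
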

Next, we bound the  probability of existence of an infection path starting  from a space-time point in the segment~$\{0\} \times [0,t]$ and crossing~$\L_\ell$.
\begin{lemma}\label{lem:exp_decay}
For any ~$\ell \in \N$ the contact process with parameter~$\lambda$ on~$\L_\ell$ satisfies
\begin{equation}\label{eq:exp_decay}
\mathbb{P}\big(\{0\} \times [0,t] \leadsto \{\ell\} \times [0, \infty)\big) \leq  e\cdot (t+1) \cdot \mathbb{P}\left((0,0) \rightsquigarrow \{\ell\} \times [0,\infty)\right),\quad t > 0.
\end{equation}
\end{lemma}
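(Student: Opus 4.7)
The plan is to reduce the event in question to a unit time interval via a union bound (exploiting the stationary increments of the Poisson processes in the graphical construction), and then to compare paths starting anywhere in $\{0\} \times [0,1]$ with paths starting at the single point $(0,0)$ by exploiting the absence of recovery marks at vertex~$0$. Throughout, write $A_s := \{(0,s) \rightsquigarrow \{\ell\} \times [0,\infty)\}$, so that the left-hand side of~\eqref{eq:exp_decay} equals $\mathbb{P}\bigl(\bigcup_{s\in [0,t]} A_s\bigr)$.

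The first step is a union bound over the $\lceil t\rceil \le t+1$ unit intervals covering $[0,t]$:
$$\mathbb{P}\Bigl(\bigcup_{s\in [0,t]} A_s\Bigr) \le \sum_{k=0}^{\lceil t\rceil -1} \mathbb{P}\Bigl(\bigcup_{s\in [k,k+1]} A_s\Bigr).$$
Since an infection path from $(0,s)$ depends only on graphical marks at times $\ge s$, and the driving Poisson processes have stationary increments, each summand equals $p_1 := \mathbb{P}\bigl(\bigcup_{s\in [0,1]} A_s\bigr)$, so the left-hand side of~\eqref{eq:exp_decay} is at most $(t+1)\,p_1$. It then suffices to prove $p_1 \le e \cdot \mathbb{P}(A_0)$.

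For this, let $F := \{D^0 \cap [0,1] = \varnothing\}$, so $\mathbb{P}(F) = e^{-1}$, and set $B := \bigcup_{s\in[0,1]} A_s$. On $B \cap F$, any infection path witnessing $A_s$ for some $s \in [0,1]$ can be extended backward to the point $(0,0)$ by adjoining the constant path at vertex~$0$ on $[0,s]$ --- admissible precisely because $F$ forbids recovery marks at~$0$ in $[0,s] \subset [0,1]$ --- giving $B \cap F \subset A_0$. Moreover $B$ is a decreasing function of $D^0$ (more recovery marks at $0$ only destroy infection paths), and so is $F$; by FKG for the Poisson process $D^0$, or equivalently by the direct coupling that deletes all points of $D^0 \cap [0,1]$ and observes that this stochastic decrease of $D^0$ only increases $\mathbb{P}(B)$, one obtains $\mathbb{P}(B \cap F) \ge \mathbb{P}(B)\mathbb{P}(F) = p_1 e^{-1}$. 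Combined with $B \cap F \subset A_0$ this yields $p_1 \le e\cdot \mathbb{P}(A_0)$, which together with the first step proves~\eqref{eq:exp_decay}. The main subtlety is precisely this monotonicity / FKG step, but it is a standard consequence of the monotone structure of the graphical construction with respect to recovery marks.
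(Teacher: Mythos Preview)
Your proof is correct and takes a different route than the paper. The paper shifts the starting interval to $[1,t+1]$ and introduces $X$, the Lebesgue measure of $\{s\in[0,t+1]:(0,s)\rightsquigarrow\{\ell\}\times[0,\infty)\}$; it then argues, via a conditional-expectation bound with respect to the $\sigma$-algebra $\mathcal{F}'$ that disregards recovery marks at~$0$ on $[0,t+1]$, that $\P(A)\le e\,\E[X]$, and concludes since $\E[X]=(t+1)\,\P(A_0)$ by stationarity. You instead decompose $[0,t]$ into $\lceil t\rceil\le t+1$ unit intervals by a union bound, and then control a single unit interval via an FKG/monotone-coupling inequality between the two decreasing (in $D^0$) events $B$ and $F$. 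Both arguments rest on the same mechanism---absence of recovery marks at~$0$ on a unit interval allows any witnessing path to be extended backward to~$(0,0)$---but package it differently: the paper converts this into a lower bound on a time-integral, you into a positive-correlation inequality. Your version has the advantage that the FKG/coupling step is entirely standard and sidesteps a subtlety in the paper's argument (a path from $(0,s)$ may revisit vertex~$0$ within $[0,t+1]$, so the claim $A\in\mathcal{F}'$ there requires some care).
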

\begin{proof}
Define the event
$$A:= \left\{\{0\}\times [1,t+1]\rightsquigarrow \{\ell\} \times [0,\infty)\right\},$$	
so that the probability in the left-hand side in~\eqref{eq:exp_decay} is equal to~$\mathbb{P}(A)$. Let $X$ denote the Lebesgue measure of the random  set of times
 $$\{s \in [0,t+1]:\;(0,s)\rightsquigarrow \{\ell\} \times [0,\infty)\}.$$ 
Denote by $\mathcal{F}$ the $\sigma$-algebra generated by all the Poisson processes in the graphical construction of the contact process on~$\L_\ell$, and let~$\mathcal{F}'$ be similarly defined, except that it disregards all the recoveries marks at~$0$ that occur before time $t+1$. Note that~$X$ is measurable with respect to~$\mathcal{F}$ and $A \in \mathcal{F}'$. Moreover, we have
$$\mathbb{E}[X\mid \mathcal{F}'] \ge e^{-1} \text{ on }A,$$
since if~$A$ occurs and~$s \in [1,t+1]$ is such that~$(0,s)\rightsquigarrow \{\ell\} \times [0,\infty)$, then with probability~$e^{-1}$ there is no recovery mark on~$[s-1,s]$, so that~$X \ge 1$. We thus obtain
\begin{align*}
\P(A) \leq \mathbb{E}\left[e\cdot \E\left[X \mid \mathcal{F}'\right] \right] = e\cdot \mathbb{E}[X] &= e\int_0^{t+1} \P\left((0,s)\rightsquigarrow \{\ell\} \times [0,\infty)\right)\mathrm{d}s\\
&= e\cdot (t+1)\cdot \mathbb{P}\left((0,0) \rightsquigarrow \{\ell\} \times [0,\infty)\right).
\end{align*}
\end{proof}
The following corollary is a straightforward consequence of Lemma \ref{lem:exp_decay} and \eqref{eq:crossing}.
\begin{corollary}\label{cor:exp_decay_space}
There exists~$c_\L > 0$ such that, for~$\ell \in \N$ large enough, the contact process with parameter~$\lambda$ on~$\L_\ell$ satisfies
\begin{equation}\label{eq:exp_decay_space}
\mathbb{P}\big(\{0\} \times [0,t] \leadsto \{\ell\} \times [0, \infty)\big) \leq (t+1) \exp \{-c_\L \ell \}.
\end{equation}
\end{corollary}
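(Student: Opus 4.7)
The plan is to chain Lemma~\ref{lem:exp_decay} and~\eqref{eq:crossing} essentially mechanically. First, I would invoke Lemma~\ref{lem:exp_decay} to replace the ``starting interval'' $\{0\}\times[0,t]$ by a single space-time starting point, reducing the task to an exponential-in-$\ell$ bound for $\mathbb{P}\bigl((0,0)\rightsquigarrow \{\ell\}\times[0,\infty)\bigr)$ on $\mathbb{L}_\ell$. A factor $e\cdot(t+1)$ will be picked up, but it gets absorbed cleanly at the end.

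Next, I would use monotonicity of the graphical construction: since $\mathbb{L}_\ell$ is a subgraph of $\mathbb{Z}$, every infection path in $\mathbb{L}_\ell$ is also an infection path in $\mathbb{Z}$, so it suffices to bound the probability of $(0,0)\rightsquigarrow \{\ell\}\times[0,\infty)$ for the contact process on all of $\mathbb{Z}$. In that setting, the event in question says that the process $\bigl(\CP[\{0\}]{\mathbb{Z}}{\lambda}{s}\bigr)_{s\ge 0}$ ever infects the vertex $\ell$.

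To control this, I apply~\eqref{eq:crossing} with parameter $t=\ell-1$: with probability at least $1-\exp\{-c_\lambda'(\ell-1)\}$, the $\mathbb{Z}$-process has died out by time $\ell-1$ and has never left the interval $[-(\ell-1),\ell-1]$. On this event the vertex $\ell$ is certainly never infected. Consequently
\[
\mathbb{P}\bigl((0,0)\rightsquigarrow \{\ell\}\times[0,\infty)\bigr) \;\le\; \exp\{-c_\lambda'(\ell-1)\}\;\le\; e^{c_\lambda'}\exp\{-c_\lambda'\ell\}.
\]
Combining this with Lemma~\ref{lem:exp_decay} yields a bound of the form $e^{1+c_\lambda'}(t+1)\exp\{-c_\lambda'\ell\}$. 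Choosing any $c_\mathbb{L}\in(0,c_\lambda')$ (for instance $c_\mathbb{L}=c_\lambda'/2$) and taking $\ell$ large enough that the multiplicative constant $e^{1+c_\lambda'}$ is dominated by $\exp\{(c_\lambda'-c_\mathbb{L})\ell\}$, one obtains exactly~\eqref{eq:exp_decay_space}.

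There is no real obstacle here; the only care needed is to pick the parameter in~\eqref{eq:crossing} to be strictly less than $\ell$ (so the confinement interval excludes $\ell$) and then to lose a small slice of the exponent in order to swallow the $O(1)$ prefactors.
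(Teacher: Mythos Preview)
Your proposal is correct and follows exactly the route the paper indicates: the paper states that the corollary is a straightforward consequence of Lemma~\ref{lem:exp_decay} and~\eqref{eq:crossing}, and that is precisely what you do. The only detail worth noting is that the paper does not spell out the argument at all, whereas you make explicit the monotonicity step (passing from $\mathbb{L}_\ell$ to $\mathbb{Z}$), the choice of parameter $t=\ell-1$ in~\eqref{eq:crossing}, and the absorption of the constant prefactor into the exponent for large~$\ell$; these are all fine and add nothing beyond what the paper intends.
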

We now show that the subcritical contact process on~$\Z$ started from occupation in a half-line~$\{1,2,\ldots\}$ has positive probability of never infecting the origin.
\begin{lemma}\label{basic_line}
There exists $c_{\L} > 0$ such that
\begin{equation}
\P \left( \CP[\{1,2,\ldots\}]{\Z}{\lambda}{t}(0) = 0 \text{ for all } t\right) > c_{\L}.
\label{eq:never_infects}\end{equation}
\end{lemma}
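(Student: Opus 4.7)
The plan is to couple, via the graphical construction, the process $\xi_t := \CP[\{1,2,\ldots\}]{\Z}{\lambda}{t}$ with the contact process $\eta_t := \CP[\N]{\N}{\lambda}{t}$ on the half-line $\N = \{1,2,\ldots\}$ (viewed as a subgraph of $\Z$) started from full occupancy, and to consider the first time $\tau := \inf\{t \geq 0 : 0 \in \xi_t\}$ that the origin becomes infected; the event in~\eqref{eq:never_infects} is exactly $\{\tau = \infty\}$. A routine induction on the arrival times of the Poisson marks shows that, up to time $\tau$, no vertex to the left of $0$ can be infected in $\xi$, and hence $\xi_t \cap \N = \eta_t$ for every $t < \tau$.

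The key observation is that the Poisson process $D^{(1,0)}$ governing transmission arrows from $1$ to $0$ is independent of all Poisson processes used to construct $\eta$. Given $\eta$, the time $\tau$ is the first instant at which $D^{(1,0)}$ produces an arrow with label at most $\lambda$ during a time $t$ with $\eta_t(1) = 1$. Writing $\bar{X} := \int_0^\infty \mathds{1}\{\eta_t(1) = 1\}\,\mathrm{d}t$ for the total time $1$ is infected in $\eta$, this yields
$$\P(\tau = \infty \mid \eta) = \exp\{-\lambda \bar{X}\},$$
so taking expectations and applying Jensen's inequality gives
$$\P(\tau = \infty) = \E\!\left[\exp\{-\lambda \bar{X}\}\right] \geq \exp\{-\lambda\, \E[\bar X]\}.$$
Thus it suffices to prove $\E[\bar X] < \infty$.

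For the latter, monotonicity yields $\eta_t(1) \leq \CP[\Z]{\Z}{\lambda}{t}(1)$, while self-duality of the contact process together with translation invariance gives
$$\P\bigl(\CP[\Z]{\Z}{\lambda}{t}(1) = 1\bigr) = \P\bigl(\CP[\{0\}]{\Z}{\lambda}{t} \neq \varnothing\bigr) \leq e^{-c_\lambda t}$$
by~\eqref{eq:exp_decay_time}. Integrating in $t$ gives $\E[\bar X] \leq 1/c_\lambda < \infty$, which yields~\eqref{eq:never_infects} with the explicit constant $c_\L := \exp\{-\lambda/c_\lambda\} > 0$.

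The only step requiring any scrutiny is the coupling claim for $\xi$ and $\eta$ before $\tau$, which is a direct consequence of the graphical construction (the only way the origin can become infected is via the arrows in $D^{(1,0)}$, and nothing to the left of $0$ is reachable beforehand); I do not anticipate a genuine obstacle, and no estimate beyond~\eqref{eq:exp_decay_time} is required.
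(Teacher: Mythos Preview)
Your argument is correct and takes a genuinely different route from the paper's. The paper's proof first forces a ``clearing'' event $A(n)$ in which vertices $1,\ldots,n-1$ recover without transmitting during $[0,1]$, so that after time~$1$ only sites at distance at least~$n-1$ from the origin remain infected; it then invokes the spatial bound~\eqref{eq:crossing} and a union bound over $i\ge n$ to show that, for $n$ large, with positive probability none of these surviving infections ever reaches~$0$. Your approach instead couples with the half-line process~$\eta$, uses the independence of $D^{(1,0)}$ from~$\eta$ to write $\P(\tau=\infty\mid\eta)=e^{-\lambda\bar X}$, and then bounds $\E[\bar X]$ via duality and the temporal decay~\eqref{eq:exp_decay_time}. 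What your method buys is an explicit constant $c_\L=\exp\{-\lambda/c_\lambda\}$ and a proof that uses only~\eqref{eq:exp_decay_time}, not~\eqref{eq:crossing}; the paper's method is slightly more elementary in that it avoids the duality and Jensen steps. The coupling identity $\tau=\inf\{t:\text{arrow in }D^{(1,0)}\text{ with label }\le\lambda\text{ and }\eta_{t^-}(1)=1\}$ is indeed straightforward from the graphical construction, as you note.
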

\begin{proof}
For~$n \in \N$, let~$A(n)$ denote the event that vertices~$1,\ldots,n-1$ have a recovery mark and generate no transmission arrow in the time interval~$[0,1]$. We have
$$\P \left( \CP[\{1,2,\ldots\}]{\Z}{\lambda}{t}(0) = 0 \text{ for all } t\right)  \ge \P(A(n)) \cdot \P\left(\CP[\{n,n+1,\ldots\}]{\Z}{\lambda}{t}(0) = 0 \text{ for all }t\right). $$
For any~$n \in \N$, we have~$\P(A(n)) > 0$ and
\begin{align*}\P\left(\CP[\{n,n+1,\ldots\}]{\Z}{\lambda}{t}(0) = 0 \text{ for all }t\right) &\ge \mathbb{P}\left(\bigcap_{i=n}^\infty \left\{ \CP[\{i\}]{\Z}{\lambda}{t} \subset [i/2,3i/2] \text{ for all } t \right\} \right)\\
&\stackrel{\eqref{eq:crossing}}{\ge} 1 - \sum_{i=n}^\infty \exp\{-c'_\lambda \cdot  i\}, \end{align*}
which can be made positive by taking~$n$ large enough.
\end{proof}

Finally, we compare the contact process on the same graph for two different values of the infection parameter.

\begin{lemma}\label{lem:line_lambda}
For all $\lambda', \lambda > 0$ with $\lambda' < \lambda < \lambda_c(\mathbb{Z})$ there exists $\eta = \eta_{\lambda',\lambda} > 1$ such that, for $\ell$ large enough,
\begin{equation}\label{eq:line_lambda}
\P \left( \barCP[\{0\}]{\{0,1,...\}}{\lambda'}(\ell) > 0 \right) \leq \eta^{-\ell} \P \left( \barCP[\{0\}]{\{0,1,...\}}{\lambda}(\ell) > 0 \right) 
\end{equation}
\end{lemma}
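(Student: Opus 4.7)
The lemma expresses a strict monotonicity of the exponential rate of decay of the half-line crossing probability $p_\mu(\ell):=\mathbb{P}\bigl(\barCP[\{0\}]{\{0,1,\ldots\}}{\mu}(\ell)>0\bigr)=\mathbb{P}\bigl((0,0)\stackrel{\mu}{\rightsquigarrow}\{\ell\}\times[0,\infty)\bigr)$ as a function of the subcritical infection parameter $\mu$. My plan is a first/second moment argument in the graphical construction, organized around canonical rightward infection paths.

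\emph{First moment (label comparison).} Condition on the set $\Pi$ of all arrow positions, a Poisson process of intensity $\lambda$ on each directed edge. Given $\Pi$, each arrow carries an independent uniform label on $[0,\lambda]$; call it \emph{$\mu$-active} iff this label is at most $\mu$. For any canonical $\lambda$-crossing $\gamma$ in $\Pi$ (a distinguished ordered sequence of arrows realising an infection path from $(0,0)$ to $\{\ell\}\times[0,\infty)$), the conditional probability that \emph{every} one of its $|\gamma|\ge\ell$ arrows is $\lambda'$-active equals $(\lambda'/\lambda)^{|\gamma|}\le(\lambda'/\lambda)^\ell$, since $\lambda'/\lambda<1$. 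Union-bounding over canonical crossings and taking expectation yields
\[ p_{\lambda'}(\ell)\;\le\;(\lambda'/\lambda)^\ell\;\mathbb{E}_\lambda\bigl[N^{\mathrm{can}}_\lambda\bigr]. \]

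\emph{Matching lower bound (second moment).} To close the argument I need $\mathbb{E}_\lambda[N^{\mathrm{can}}_\lambda]\le C_\ell\,p_\lambda(\ell)$ with $C_\ell$ sub-exponential in $\ell$. Specializing to \emph{direct} canonical paths $0\to1\to\cdots\to\ell$ parameterized by ordered arrow times $0<t_1<\cdots<t_\ell$, a Poisson calculation gives $\mathbb{E}_\lambda[N^{\mathrm{dir}}_\lambda]=\lambda^\ell(1-o(1))$ as the time horizon grows (arrow intensity $\lambda^\ell$, and a telescoping no-recovery factor $e^{-t_\ell}$ integrated against the $\Gamma(\ell,1)$ density). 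Cauchy--Schwarz then gives $p_\lambda(\ell)\ge\mathbb{E}_\lambda[N^{\mathrm{dir}}_\lambda]^2/\mathbb{E}_\lambda[(N^{\mathrm{dir}}_\lambda)^2]$, and the second moment is estimated by decomposing pairs $((s_i),(t_i))$ according to the overlap of their recovery-suppression intervals $[s_{i-1},s_i]\cup[t_{i-1},t_i]$. Subcriticality $\lambda<\lambda_c(\mathbb{Z})$ together with Corollary~\ref{cor:exp_decay_space} keeps the resulting overlap integral bounded uniformly in $\ell$, yielding $\mathbb{E}_\lambda[(N^{\mathrm{dir}}_\lambda)^2]\le C\,\mathbb{E}_\lambda[N^{\mathrm{dir}}_\lambda]^2$. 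Combining with Step~1 (with $N^{\mathrm{can}}$ enlarged consistently to cover $A_\ell^{\lambda'}$) produces $p_{\lambda'}(\ell)/p_\lambda(\ell)\le C_\ell(\lambda'/\lambda)^\ell$, from which the lemma follows with any $\eta\in(1,\lambda/\lambda')$ after absorbing $C_\ell$ into a slightly reduced exponential base for $\ell$ large.

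\emph{Main obstacle.} The central technical difficulty is the second-moment estimate: the joint density of two direct crossings carries a correction $e^{\sum_i|[s_{i-1},s_i]\cap[t_{i-1},t_i]|}$, which is as large as $e^{t_\ell}$ when the time-sequences nearly coincide. Controlling this requires splitting into a ``well-separated'' regime, which contributes the bulk $(\lambda^\ell)^2$, and a ``close'' regime handled by a one-dimensional Poisson calculation together with the subcritical decoupling from Corollary~\ref{cor:exp_decay_space}. A secondary difficulty is ensuring that the union bound in Step~1 really does cover $A^{\lambda'}_\ell$, since crossings may in principle have excursions rather than being monotone; one remedies this by enlarging the class of canonical paths to include a polynomial-in-$\ell$ family of minimal-excursion variants and absorbing the resulting combinatorial factor into $C_\ell$.
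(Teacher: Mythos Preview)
Your proposal has a genuine gap in the second-moment step, and this gap is fatal to the whole argument.

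You claim that for direct crossings $N^{\mathrm{dir}}_\lambda$ one has $\mathbb{E}_\lambda[(N^{\mathrm{dir}}_\lambda)^2]\le C\,\mathbb{E}_\lambda[N^{\mathrm{dir}}_\lambda]^2$ with $C$ independent of $\ell$. But this cannot hold: by Paley--Zygmund (or the Cauchy--Schwarz inequality you invoke), it would give
\[
p_\lambda(\ell)\;\ge\;\mathbb{P}(N^{\mathrm{dir}}_\lambda>0)\;\ge\;\frac{\mathbb{E}_\lambda[N^{\mathrm{dir}}_\lambda]^2}{\mathbb{E}_\lambda[(N^{\mathrm{dir}}_\lambda)^2]}\;\ge\;\frac{1}{C}
\]
uniformly in $\ell$, directly contradicting the exponential decay $p_\lambda(\ell)\to 0$ that holds for every $\lambda<\lambda_c(\mathbb{Z})$. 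The point is that your own first-moment computation already gives $\mathbb{E}_\lambda[N^{\mathrm{dir}}_\lambda]\asymp\lambda^\ell$, whereas $p_\lambda(\ell)\asymp\beta(\lambda)^\ell$ with $\beta(\lambda)<1$ (and in fact $\beta(\lambda)<\lambda$); the ratio $\mathbb{E}_\lambda[N^{\mathrm{dir}}_\lambda]/p_\lambda(\ell)$ therefore grows exponentially, and no second-moment argument can produce the sub-exponential $C_\ell$ you need. The overlap integral you describe is precisely where the blow-up hides: when the two time-sequences are close, the correction factor is of order $e^{t_\ell}$, and this regime contributes on the exponential scale, not merely as a lower-order term. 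A second, related issue is your remedy for non-monotone crossings: the family of minimal-excursion infection paths from $0$ to $\ell$ is not of polynomial size in $\ell$, so the combinatorial factor you propose to absorb is itself exponential.

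For comparison, the paper does not attempt a direct moment computation at all. It simply observes that the limit $\beta(\mu)=\lim_{\ell}p_\mu(\ell)^{1/\ell}$ exists by submultiplicativity, then invokes the strict monotonicity result of Lalley (proved for the contact process on regular trees under the condition $\beta(\lambda)<d^{-1/2}$, specialized here to $d=1$ using that $\beta(\lambda_c(\mathbb{Z}))<1$ from~\eqref{eq:exp_decay_time}), together with another result of Lalley identifying the half-line and full-line exponential rates. The strict monotonicity of $\beta$ is a genuinely non-trivial input; getting it from a bare first/second moment comparison in the graphical construction is not feasible for the reason above.
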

\begin{proof}
Using monotonicity and the Markov property it can be proved that the limit
$$\beta(\lambda) = \lim \P \left( \barCP[\{0\}]{\Z}{\lambda}(\ell) > 0 \right)^{1/\ell}$$
exists (see discussion preceding Proposition 4.50 in \cite{lig2} for a full proof of this fact). Furthermore, it was shown in \cite{lal2} that, for the contact process on a regular tree, if
$$\lambda' < \lambda \text{ and } \beta(\lambda) < 1 / \sqrt{d}$$
then $\beta(\lambda') < \beta(\lambda)$. Noting that the exponential bound \eqref{eq:exp_decay_time} implies that $\beta(\lambda_c(\Z)) < 1$, we have the result for the contact process on $\Z$. Finally, \cite{lal1} proves that
$$\lim \P \left( \barCP[\{0\}]{\Z}{\lambda}(\ell) > 0 \right)^{1/\ell} = \lim \P \left( \barCP[\{0\}]{\{0,1,...\}}{\lambda}(\ell) > 0 \right)^{1/\ell}.$$
\end{proof}

\subsection{Contact process on finite trees}

To conclude this section, we gather a few estimates from~\cite{cmmv} concerning the contact process on finite trees.  We continue with fixed~$\lambda < \lambda_c(\mathbb{Z})$, and assume~$d$ is large enough that~$\lambda > \lambda_c^\mathrm{loc}(\mathbb{T}^d)$. For each~$h \in \mathbb{N}$, we let~$\T^d_h$ be a rooted tree with branching number~$d$, truncated at height~$h$. This  means that~$\T^d_h$ is a tree with a root vertex~$\rho$ with degree~$d$, and so that vertices at graph distance between one and~$h-1$ from~$\rho$ have degree~$d+1$, and vertices at graph distance~$h$ from~$\rho$ have degree one. The next result contains two statements concerning the contact process on~$\mathbb{T}^d_h$. First, the process started from full occupancy survives for a time at least as large as exponential in~$d^h$, with probability tending to one exponentially in~$d^h$. Second, with probability bounded away from zero, the process started from any non-empty configuration couples with the process started from full occupancy within time~$\exp\{d^{h^{1/5}}\}$ (and both process remain alive at a time that is exponential  in~$d^h$).

\begin{proposition}\label{basic_trees}
	There exists $c_{\T} = c_{\T}(\lambda,d) > 0$ such that, for $h$ large enough,
	\begin{equation}\label{eq:metastability}
	\mathbb{P}\left(\CP[\T^d_h]{\T^d_h}{\lambda}{\exp\{c_\T\cdot d^h\}} \neq \varnothing \right) > 1-\exp\{-c_{\T}\cdot {d^h}\}
	\end{equation}
	and, letting~$\mathcal{t}(h):= \exp\{d^{h^{1/5}}\}$,
	\begin{equation}\label{eq:surv_subset}
	\inf_{\substack{A \subset \T^d_h,\\A \neq \varnothing}}\mathbb{P} \left(\CP[A]{\T^d_h}{\lambda}{\mathcal{t}(h)} = \CP[\T^d_h]{\T^d_h}{\lambda}{\mathcal{t}(h)},\;\; \CP[A]{\T^d_h}{\lambda}{\exp\left\{c_\T\cdot d^h \right\}} \neq \varnothing \right) > c_{\T}.
	\end{equation}
\end{proposition}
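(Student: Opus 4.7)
Both statements are to be extracted, possibly with minor adaptations, from the reference~\cite{cmmv}. The overarching idea is that, since $\lambda > \lambda_c^{\mathrm{loc}}(\T^d)$, the contact process on the infinite tree $\T^d$ survives locally at rate $\lambda$, and this supercritical behaviour can be transferred to the truncated trees $\T^d_h$ provided $h$ is large enough.

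For~\eqref{eq:metastability}, the plan is first to establish a lower bound on the expected extinction time from full occupancy of the form $\E[\tau_{\mathrm{ext}}] \ge \exp\{2 c \cdot d^h\}$ for some $c > 0$, where $\tau_{\mathrm{ext}} := \inf\{t: \CP[\T^d_h]{\T^d_h}{\lambda}{t} = \varnothing\}$, and then to invoke Lemma~\ref{lem:inv_markov} with $s = \exp\{c \cdot d^h\}$ to obtain
\begin{equation*}
\P\left(\CP[\T^d_h]{\T^d_h}{\lambda}{s} = \varnothing\right) \;\le\; \frac{s}{\E[\tau_{\mathrm{ext}}]} \;\le\; \exp\{-c\cdot d^h\},
\end{equation*}
which yields the desired statement with $c_\T = c$. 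The exponential lower bound on $\E[\tau_{\mathrm{ext}}]$ would be obtained by using local survival on $\T^d$ to exhibit, inside $\T^d_h$, a large collection of disjoint subtrees each of which keeps an infected vertex near its root for a long time with probability close to one; combining these (approximately) independent events via a renewal or block-coupling argument yields an exponential-in-$d^h$ estimate.

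For~\eqref{eq:surv_subset}, the strategy is a two-stage argument. Starting from any non-empty $A \subset \T^d_h$, pick any $v \in A$; with probability bounded from below uniformly in $A$ and $h$, the process started from $\{v\}$ manages within constant time to spread the infection over a subtree of $\T^d_h$ of depth roughly $h^{1/5}$, on which it then behaves essentially as the supercritical process on an infinite tree. This subtree then ``saturates'' within time of order $\exp\{d^{h^{1/5}}\}$ by the metastability reasoning of~\eqref{eq:metastability} applied at that intermediate scale; once saturated, a graphical-construction coupling identifies $\CP[A]{\T^d_h}{\lambda}{\mathcal{t}(h)}$ with $\CP[\T^d_h]{\T^d_h}{\lambda}{\mathcal{t}(h)}$, and survival up to time $\exp\{c_\T d^h\}$ follows from~\eqref{eq:metastability} itself.

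The main obstacle is~\eqref{eq:surv_subset}: one needs a quantitative spreading estimate showing that, within the intermediate time scale $\mathcal{t}(h) = \exp\{d^{h^{1/5}}\}$ (much smaller than the metastable extinction time $\exp\{c_\T d^h\}$, but much larger than the saturation time of a subtree of depth $h^{1/5}$), the infection from any non-empty $A$ couples with the full-occupancy process. Additional care is required near the leaves of $\T^d_h$, where the reduced degree makes infection paths die out more easily; it is precisely for these quantitative spreading estimates that~\cite{cmmv} is invoked.
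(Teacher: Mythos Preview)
Your proposal is correct and follows essentially the same approach as the paper. The paper's proof is in fact a direct citation exercise: for~\eqref{eq:metastability} it invokes Theorem~1.5 of~\cite{cmmv}, which gives precisely the exponential-in-$|\T^d_h|$ lower bound on $\E[\tau_{\mathrm{ext}}]$ you describe, and then applies Lemma~\ref{lem:inv_markov} exactly as you outline; for~\eqref{eq:surv_subset} it combines Corollary~4.10 of~\cite{cmmv} (uniform positive probability of survival to time~$\mathcal{t}(h)$) with Proposition~4.15 of~\cite{cmmv} (coupling with full occupancy conditional on survival), together with~\eqref{eq:metastability}. Your two-stage spreading sketch is a reasonable description of what lies behind those cited results, though the paper's decomposition is slightly cleaner: rather than tracking a subtree of depth~$h^{1/5}$ explicitly, it separates the event into ``survive to time~$\mathcal{t}(h)$'' (probability $\ge c_2$ uniformly) and ``survive but fail to couple'' (probability $\to 0$), which avoids having to argue directly that saturating a small subtree forces coupling on the full tree.
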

\begin{proof}
	Theorem~1.5 in~\cite{cmmv} states that the limit
	\begin{equation}\label{eq:background_tree1}
	\lim_{h \to \infty}\frac{\log \mathbb{E}\left[\inf\left\{t: \CP[\T^d_h]{\T^d_h}{\lambda}{t} = \varnothing \right\} \right]}{|\T^d_h|}
	\end{equation}
	exists and is positive; denote it by~$c_1$. Taking~$c_\T < c_1/4$, the inequality~\eqref{eq:metastability} follows from this combined with~\eqref{eq:ud_markov}. Next,  Corollary~4.10 in~\cite{cmmv} implies that there exists a constant~$c_2 > 0$ such that
	$$\inf_{\substack{A \subset \T^d_h,\\A \neq \varnothing}}\mathbb{P}\left(\CP[A]{\T^d_h}{\lambda}{\mathcal{t}(h)} \neq \varnothing \right) > c_2,$$
	and Proposition~4.15 in~\cite{cmmv} gives
	$$\sup_{\substack{A \subset \T^d_h,\\A \neq \varnothing}} \mathbb{P}\left(\CP[A]{\T^d_h}{\lambda}{\mathcal{t}(h)} \neq \varnothing,\;\CP[A]{\T^d_h}{\lambda}{\mathcal{t}(h)} \neq \CP[\T^d_h]{\T^d_h}{\lambda}{\mathcal{t}(h)} \right) \xrightarrow{h \to \infty} 0.$$
	Using these two facts and also~\eqref{eq:metastability}, we obtain that, if~$c_\T < \min(c_1/4,c_2/2)$, then for any~$A \subset \T^d_h$,~$A \neq \varnothing$,
	$$\mathbb{P}\left(\CP[A]{\T^d_h}{\lambda}{\exp\{c_\T \cdot d^h\}} \right) \ge \mathbb{P}\left(\varnothing \neq \CP[A]{\T^d_h}{\lambda}{\mathcal{t}(h)}  = \CP[\T^d_h]{\T^d_h}{\lambda}{\mathcal{t}(h)},\;\; \CP[\T^d_h]{\T^d_h}{\lambda}{\exp\{c_\T \cdot d^h\}} \neq \varnothing \right) > c_\T.$$
\end{proof}

\section{Proof of Theorem~\ref{thm:tool}}
\label{sec:proof_thm}
In this section, we will give some key definitions and state three results (Propositions~\ref{prop:ignition},~\ref{prop:time_spent} and~\ref{prop:no_go}) that will immediately imply Theorem~\ref{thm:tool}.  The idea of our graph augmentation~$(\tilde{G},\tilde{o})$ of a given rooted graph~$(G,o)$ is summarized by Figure~\ref{fig:gtilde} below: next to the root~$o$ of~$G$, we append a copy of~$\mathbb{T}^d_h$ (with~$h$ large), followed by a line segment whose length is a function of~$h$, denoted~$L(h)$. The endpoint of this line segment that is away from the tree is the root~$\tilde{o}$ of~$\tilde{G}$. We will be free to take~$h$ large (adjusting the length~$L(h)$ accordingly) so as to guarantee several desirable properties for~$\tilde{G}$.

Now let us briefly discuss the differences between the construction carried out in this work and the desert-oasis construction in \cite{ss2}. First note that in \cite{ss2} the goal is to show that the process survives at~$\lambda_c(\mathbb{Z})$ and dies out at any~$\lambda < \lambda_c(\mathbb{Z})$. At each step of their construction, they take the oasis-desert pair satisfying two things. First, the oasis is a structure that sustains for a long time the process with rate $\lambda_c(\mathbb{Z})$ (and possibly also with slightly smaller rates). Second, the volume of the oasis is small compared to the length of the desert: in the~$n$th augmentation of their graph construction, the oasis has volume~$n^2$ and the desert has  length~$n^3$. This way, if~$\lambda < \lambda_c(\mathbb{Z})$, even if the process survives for time~$\exp\{c(\lambda)n^2\}$ in the oasis, this is not enough to overcome the probability of crossing the oasis, which is around~$\exp\{-c'(\lambda)n^3\}$. However, in our case, we want to have as target rate a value~$\lambda$ that is already subcritical, so it is not so clear which sizes to put in place  of~$n^2$ and~$n^3$, or how a given choice of sizes can be favorable for the target~$\lambda$ but not for smaller values. Our solution involves defining the length of the desert implicitly (depending on~$\lambda$ and on a fixed oasis size), in a way that the the probability of crossing the desert starting from the oasis at rate~$\lambda$ is around a prescribed value.

Fix~$\lambda \in (0,\lambda_c(\mathbb{Z}))$. The value~$d = d_\lambda$ that appears in the statement of Theorem~\ref{thm:tool} will now be chosen:~$d$ should  be  large enough that~$\lambda > \lambda_c^{\mathrm{loc}}(\mathbb{T}^d)$, and also
\begin{equation}
\label{eq:choice_of_d} m = m_\lambda:= d\cdot (1-e^{-\lambda}) \cdot e^{-2} > 1.
\end{equation}
From now on, we fix~$(G,o)=((V,E),o)$ a rooted tree with degrees bounded by~$d+1$ and with~$\deg_G(o)= 1$, as in the statement of Theorem~\ref{thm:tool}.

Throughout this section, it will be useful to abbreviate
\begin{equation}
\label{eq:cals}
\cals(h) := \exp \left\{ d^{\sqrt{h}}\right\},\quad h \in \mathbb{N}.
\end{equation}

We first define an auxiliary graph~$\hat{G}$, depending on~$(G,o)$ and on a positive integer~$h$ (which we often omit from the notation), as follows. We let~$\mathcal{T}_h$ be a copy of~$\mathbb{T}^d_h$, with root~$\rho$, and let~$\mathcal{L}_\infty$ be a half-line with extremity  denoted~$v_-$. We then let~$\hat{G}$ denote the graph obtained by putting the three graphs~$G, \mathcal{T}_h,\mathcal{L}_\infty$ together, and connecting them by including an edge between~$o$ (the root of~$G$) and~$\rho$ (the root of~$\mathcal{T}_h$), and an edge between~$\rho$ and~$v_-$ (the extremity of~$\mathcal{L}_\infty$).

For each~$\ell \in \mathbb{N}_0$, let~$v_\ell$ denote the vertex of~$\mathcal{L}_\infty$ at distance~$\ell$ from~$v_-$ (in particular,~$v_0 = v_-$), and define
\begin{equation}\label{eq:def_of_p_cross}\mathcal{P}(\ell) = \mathcal{P}_{(G,o),h}(\ell) := \mathbb{P}\left( \bar{\xi}^{V \cup \mathcal{T}_h}_{\hat{G},\lambda}(v_\ell) > 0\right),\end{equation}
that is,~$\mathcal{P}(\ell)$ is the probability that~$v_\ell$  becomes infected in the contact process on~$\hat{G}$ with parameter~$\lambda$ and initial configuration~$V \cup \mathcal{T}_h$. Note that~$\mathcal{P}$ is non-increasing. 
\begin{lemma}[Properties of~$\mathcal{P}$]
\label{lem:well_def} We have
$$\lim_{\ell \to \infty} \mathcal{P}(\ell) = 0$$ and, if~$h$ is large enough,
\begin{equation}\label{eq:first_bound_P}\mathcal{P}(0), \mathcal{P}(1) \ge 1 - \cals(h)^{-1}.\end{equation}
\end{lemma}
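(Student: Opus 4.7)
The plan is to prove the two assertions separately. Both rely on the observation that $\mathcal{L}_\infty$ is attached to the rest of $\hat{G}$ only through the single edge $\{\rho,v_-\}$, combined with subcriticality of the contact process on $\mathcal{L}_\infty$ at rate $\lambda<\lambda_c(\mathbb{Z})$.

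For $\lim_{\ell\to\infty}\mathcal{P}(\ell)=0$: since $\mathcal{P}$ is non-increasing, the limit equals $\mathbb{P}(R=\infty)$, where $R:=\sup\{\ell:v_\ell\text{ ever infected}\}$. In the settings where this lemma is invoked---namely, the iterative construction used in the proof of Theorem~\ref{thm:main}---the tree $G$ is always finite, so $V\cup\mathcal{T}_h$ is a finite starting set and $\hat{G}$ is a finite decoration of $\mathcal{L}_\infty$. A contact process at rate $\lambda<\lambda_c(\mathbb{Z})$ started from a finite subset of such a graph dies out almost surely (since the subcriticality of the half-line is not affected by a finite modification at its endpoint), so $R<\infty$ almost surely and the limit vanishes.

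For $\mathcal{P}(0),\mathcal{P}(1)\geq 1-\cals(h)^{-1}$: I would use monotonicity to drop $V$ from the initial state and study the contact process on $\mathcal{T}_h\cup\{v_-,v_1\}$ started from $\mathcal{T}_h$. On the event $\{v_-\text{ never infected}\}$, the dynamics on $\mathcal{T}_h\cup\{v_-\}$ agree with those of the ordinary CP on $\mathcal{T}_h$, so, letting $T_\rho$ denote the total time $\rho$ is infected in the latter process, a thinning argument applied to the independent $\rho\to v_-$ Poisson transmission process yields $\mathbb{P}(v_-\text{ never infected})=\mathbb{E}[e^{-\lambda T_\rho}]$. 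Combining the survival estimate~\eqref{eq:metastability} with the fact that the quasi-stationary distribution on $\mathcal{T}_h$ charges $\rho$ with density bounded away from zero, one derives $T_\rho\geq c\exp\{c_\T d^h\}$ with probability at least $1-e^{-c'd^h}$ for some $c,c'>0$. Since $d^{\sqrt{h}}\ll d^h$ for large $h$, this yields $\mathcal{P}(0)\geq 1-\cals(h)^{-1}$. The bound on $\mathcal{P}(1)$ follows by one further iteration of the same Poisson-transmission argument across the edge $\{v_-,v_1\}$, using that after $v_-$ is first infected it spends a positive density of time being infected.

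The hardest step will be the quantitative lower bound on $T_\rho$, since Proposition~\ref{basic_trees} only provides survival and a coupling-to-full-occupancy statement, not a direct density estimate at the root $\rho$. A concrete route around this is to iterate~\eqref{eq:surv_subset}: partition $[0,\exp\{c_\T d^h\}]$ into $\Omega(\exp\{c_\T d^h\}/\mathcal{t}(h))$ disjoint windows of length $\mathcal{t}(h)$, on each of which, started from its (nonempty) state at the window's beginning, the process couples to the one from $\mathcal{T}_h$ with probability at least $c_\T$. On each successful window, a positive-probability local event of bounded length delivers a $\rho\to v_-$ transmission during an infected time of $\rho$; near-independence across windows then furnishes the required super-exponentially small bound on $\mathbb{P}(v_-\text{ never infected})$.
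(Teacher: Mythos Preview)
Your argument for $\lim_{\ell\to\infty}\mathcal{P}(\ell)=0$ is essentially the same as the paper's.

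For the bound $\mathcal{P}(0),\mathcal{P}(1)\ge 1-\cals(h)^{-1}$, your approach is workable but considerably more elaborate than the paper's. The paper never isolates the root $\rho$ or estimates $T_\rho$ at all. Instead it observes that once $h$ exceeds the diameter of $G$, every vertex of $V\cup\mathcal{T}_h$ lies within graph distance of order $h$ from $v_0$ (or $v_1$), so for \emph{any} non-empty $A\subset V\cup\mathcal{T}_h$ the elementary reach bound~\eqref{eq:lower_bound_reach} gives
\[
\mathbb{P}\bigl(\xi^A_{\hat{G},\lambda;t}(v_0)=1\text{ for some }t\le h\bigr)\ge \bigl(e^{-1}(1-e^{-\lambda})\bigr)^{h}.
\]
Iterating this over $\lfloor\cals(h)/h\rfloor$ disjoint time windows via the Markov property, and combining with the survival bound $\mathbb{P}(\xi^{V\cup\mathcal{T}_h}_{\hat G,\lambda;\cals(h)}=\varnothing)\le\cals(h)/\exp\{c_\T d^h\}\ll\cals(h)^{-1}$ from Lemma~\ref{lem:inv_markov} and Proposition~\ref{basic_trees}, finishes the job immediately; the same argument treats $v_1$. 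The point is that the ``hardest step'' you identify---a quantitative lower bound on the occupation time at $\rho$---is entirely avoided: one does not need the infection to pass through $\rho$ at a specified rate, only that from whatever vertex is currently infected there is a short path to $v_0$. Your Poisson-thinning identity $\mathbb{P}(v_-\text{ never infected})=\mathbb{E}[e^{-\lambda T_\rho}]$ and the window-iteration of~\eqref{eq:surv_subset} would ultimately succeed, but they replace a two-line argument with a page of coupling bookkeeping, and the extension to $\mathcal{P}(1)$ then requires yet another layer.
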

\begin{proof} The first statement follows from the fact that the contact process with parameter~$\lambda$ on~$\hat{G}$ dies out (which is in turn an easy consequence of the facts that~$G, \mathcal{T}_h$ are finite graphs and~$\lambda < \lambda_c(\mathbb{Z})$).

For the second statement, we will only treat~$\mathcal{P}(0)$, since the proof for~$\mathcal{P}(1)$ is the same. Assume~$h$ is larger than the graph diameter of~$G$. Then, for any non-empty~$A \subset V \cup \mathcal{T}_h$ we have
$$\mathbb{P}\left(\CP[A]{\hat{G}}{\lambda}{t}(v_0) = 1 \text{ for some } t \leq h \right) \ge \left(e^{-1}\cdot (1-e^{-\lambda}) \right)^h.$$
Iterating this, we obtain
\begin{align*}&\mathbb{P}\left(\xi^A_{\hat{G},\lambda;\cals(h)} \neq \varnothing,\;  \CP[A]{\hat{G}}{\lambda}{t}(v_0) = 0\; \forall t \leq \cals(h) \right)\le \left(1- \left(e^{-1}\cdot (1-e^{-\lambda})\right)^h \right)^{\lfloor \cals(h)/h\rfloor}.\end{align*}
The result now follows from noting that the right-hand side above is much smaller than~$\cals(h)^{-1}$, and moreover
$$\mathbb{P}\left(\CP[V \cup \mathcal{T}_h]{\hat{G}}{\lambda}{\cals(h)} = \varnothing \right) \le \mathbb{P}\left( \CP{\mathbb{T}^d_h}{\lambda}{\cals(h)} = \varnothing \right) \le \frac{\cals({h})}{\exp\{c_\mathbb{T} \cdot d^h\}} \ll \cals(h)^{-1}$$
if~$h$ is large, where in the second inequality we have used Lemma \ref{lem:inv_markov} and Proposition \ref{basic_trees}.
\end{proof}
With the above result at hand, for~$h$ large enough we can define
\begin{equation}\label{eq:def_of_L_cross}L(h) := \inf\left\{\ell \in \mathbb{N}_0:  \mathcal{P}(\ell) < 1-\cals(h)^{-1} \right\}\end{equation}
and have~$L(h) > 1$. We now define the graph~$\tilde{G}$ in the same way as~$\hat{G}$, with the sole exception that, instead of the half-line~$\mathcal{L}_\infty$, it includes a line segment~$\mathcal{L}_h$ with vertex set
$$v_0 = v_-,\;v_1,\;\ldots,\; v_{L(h)}$$
(as before, we link~$\mathcal{L}_h$ to~$\mathcal{T}_h$ with an edge between~$\rho$ and~$v_-$). We denote by~$\tilde{V}$ and~$\tilde{E}$ the vertex and edges sets of~$\tilde{G}$, respectively. The vertex~$v_{L(h)}$ is the root of~$\tilde{G}$, denoted~$\tilde{o}$. The definition of~$(\tilde{G},\tilde{o})$ depends on~$(G,o)$ and~$h$, but this dependence will be omitted from the notation. We will several times assume that~$h$ is large (possibly depending on~$G$).

\begin{figure}[htb]
	\begin{center}
		\setlength\fboxsep{0pt}
		\setlength\fboxrule{0.0pt}
		\fbox{\includegraphics[width = 0.8\textwidth]{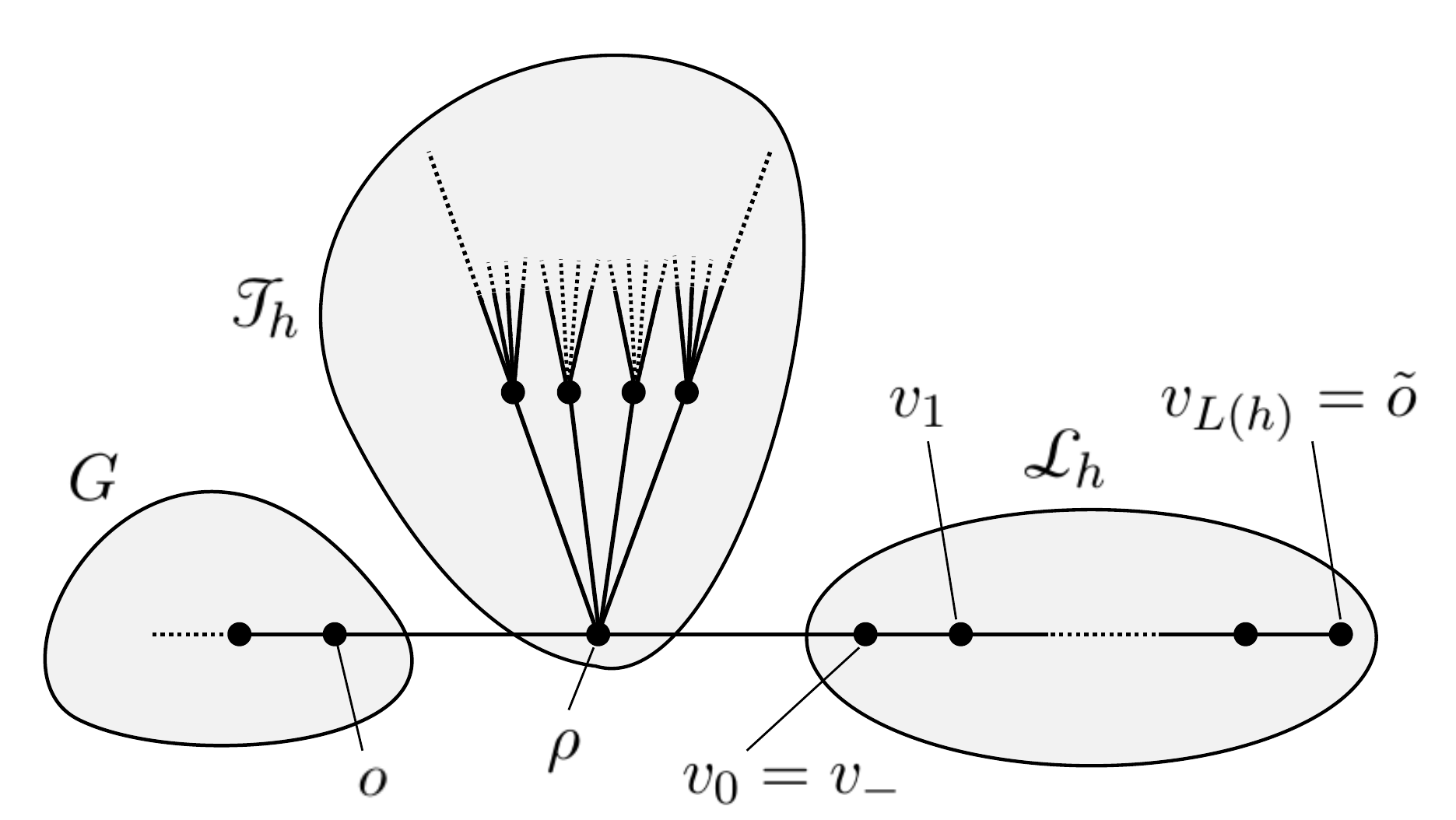}}
	\end{center}
	\caption{The graph~$\tilde{G}$.}
	\label{fig:gtilde}
\end{figure}

We will now state several results about~$(\tilde{G},\tilde{o})$, culminating in the proof of Theorem~\ref{thm:tool}. Define the set of configurations
$$\mathcal{A}_h := \left\{A \subset \tilde{V}: \#\{v \in A \cap \mathcal{T}_h: \mathrm{dist}_{\tilde{G}}(\rho,v) = \lfloor h/2\rfloor \}  \ge (m/2)^{\lfloor h/2\rfloor} \right\},$$
that is,~$A \in \mathcal{A}_h$ if~$A$ has at least~$m^{\lfloor h/2\rfloor}$ vertices at height~$\lfloor h/2\rfloor$ in~$\mathcal{T}_h$. The following result is the main reason for the introduction of~$\mathcal{A}_h$.
\begin{lemma}[Persistence starting from~$\mathcal{A}_h$] \label{lem:persist0}
If~$A \in \mathcal{A}_h$, then
$$\mathbb{P}\left(\CP[A]{\tilde{G}}{\lambda}{\cals(h)} \neq \varnothing \right) > 1 - \cals(h)^{-2}.$$
\end{lemma}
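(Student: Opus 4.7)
My plan is to exploit the many seed vertices at height $\lfloor h/2 \rfloor$ of $\mathcal{T}_h$ guaranteed by $A\in\mathcal{A}_h$: each of them roots a distinct subtree isomorphic to $\mathbb{T}^d_{\lceil h/2\rceil}$, and by Proposition~\ref{basic_trees} the contact process confined to each such subtree survives up to time much larger than $\cals(h)$ with probability at least $c_\mathbb{T}>0$. Since the subtrees are pairwise vertex-disjoint, the confined processes use disjoint portions of the graphical construction and are therefore independent, so the probability that all of them die out decays exponentially in the number $N$ of seed vertices; by hypothesis $N\ge (m/2)^{\lfloor h/2\rfloor}$, which will be more than enough.

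Concretely, let $v_1,\dots,v_N$ be the elements of $A\cap\mathcal{T}_h$ at graph distance $\lfloor h/2\rfloor$ from $\rho$, and for each $i$ let $\mathcal{T}^{(i)}$ be the subtree of $\mathcal{T}_h$ consisting of $v_i$ together with all of its descendants. Since $v_i$ sits at distance $\lfloor h/2\rfloor$ from $\rho$ and $\mathcal{T}_h$ is a copy of $\mathbb{T}^d_h$ rooted at $\rho$, the rooted graph $\mathcal{T}^{(i)}$ is isomorphic to $\mathbb{T}^d_{\lceil h/2\rceil}$, and the $\mathcal{T}^{(i)}$ are pairwise vertex-disjoint. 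Let $\eta^{(i)}_t$ denote the contact process confined to $\mathcal{T}^{(i)}$ started from $\{v_i\}$. By standard monotonicity under graph inclusion and in the initial condition, $\eta^{(i)}_t\subseteq \xi^A_{\tilde G,\lambda;t}$ for every $t$. Applying~\eqref{eq:surv_subset} to each $\mathcal{T}^{(i)}\cong \mathbb{T}^d_{\lceil h/2\rceil}$ with initial set $\{v_i\}$ yields $\mathbb{P}(\eta^{(i)}_{t^\star}=\varnothing)\le 1-c_\mathbb{T}$ for $t^\star := \exp\{c_\mathbb{T}\, d^{\lceil h/2\rceil}\}$, and for $h$ large we have $t^\star \ge \cals(h)$ since $c_\mathbb{T} d^{\lceil h/2\rceil}\gg d^{\sqrt h}$. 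By the disjointness of the subtrees the $\eta^{(i)}$ are independent, hence
\begin{equation*}
\mathbb{P}\!\left(\xi^A_{\tilde G,\lambda;\cals(h)} = \varnothing\right) \;\le\; \prod_{i=1}^{N} \mathbb{P}\!\left(\eta^{(i)}_{\cals(h)} = \varnothing\right) \;\le\; (1-c_\mathbb{T})^{N} \;\le\; \exp\!\left\{-c_\mathbb{T}(m/2)^{\lfloor h/2\rfloor}\right\}.
\end{equation*}

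The main obstacle is then purely the final numerical comparison: for $h$ large, this right-hand side must be at most $\cals(h)^{-2} = \exp\{-2 d^{\sqrt h}\}$, which reduces to $c_\mathbb{T}(m/2)^{\lfloor h/2\rfloor} \ge 2 d^{\sqrt h}$, i.e., a genuinely exponential function of $h$ dominating a stretched-exponential one. Provided $d=d_\lambda$ is taken large enough that $m/2>1$ (a harmless strengthening of the condition $m>1$ in the choice of $d_\lambda$), this inequality holds for all sufficiently large $h$. The only other subtle point is the independence of the $\eta^{(i)}$, which is immediate from the graphical construction since the Poisson data attached to the pairwise-disjoint vertex and edge sets of the $\mathcal{T}^{(i)}$ are independent, and each $\eta^{(i)}$ is a measurable function of the data attached to $\mathcal{T}^{(i)}$ alone.
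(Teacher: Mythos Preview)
Your proof is correct and follows essentially the same approach as the paper: both take the $(m/2)^{\lfloor h/2\rfloor}$ seed vertices at level $\lfloor h/2\rfloor$, attach to each a disjoint subtree of height roughly $h/2$, apply~\eqref{eq:surv_subset} to each confined process, and use independence to get the bound $(1-c_\mathbb{T})^{(m/2)^{\lfloor h/2\rfloor}}$. Your version is in fact slightly more careful than the paper's in two respects: you correctly identify the descendant subtrees as copies of $\mathbb{T}^d_{\lceil h/2\rceil}$ rather than $\mathbb{T}^d_{\lfloor h/2\rfloor}$, and you explicitly flag the need for $m/2>1$ in the final comparison with $\cals(h)^{-2}$, which the paper leaves implicit.
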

\begin{proof}
Fix~$A \in \mathcal{A}_h$ and let~$T_1,\ldots, T_{(m/2)^{\lfloor h/2\rfloor}}$ be disjoint copies of~$\mathbb{T}^d_{\lfloor h/2\rfloor}$ that appear as subtrees of~$\mathcal{T}_h$, rooted at a vertices~$v_1,\ldots, v_{(m/2)^{\lfloor h/2\rfloor}} \in A \cap \mathcal{T}_h$ at distance~$\lfloor h/2 \rfloor$ from~$\rho$. We have
\begin{align*}\mathbb{P}\left(\CP[A]{\tilde{G}}{\lambda}{\cals(h)} \neq \varnothing \right) &\ge \mathbb{P}\left(\bigcup_{i=1}^{(m/2)^{\lfloor h/2\rfloor}} \left\{\CP[\{v_i\}]{T_i}{\lambda}{\exp\{c_\mathbb{T}\cdot d^{\lfloor h/2\rfloor} \}} \neq \varnothing \right\} \right) \\[.2cm]& \stackrel{\eqref{eq:surv_subset}}{\ge} 1-\left(1- c_\mathbb{T} \right)^{(m/2)^{\lfloor h/2\rfloor}} \gg 1-\cals(h)^{-1}.\end{align*}
\end{proof}

\begin{proposition}[Ignition]\label{prop:ignition}
There exists~$c_\lambda > 0$ such that for~$h$ large enough, any~$\lambda' \ge \lambda$ and any~$A \subset V$ we have
\begin{equation*}
\mathbb{P}\left(\CP[A]{\tilde{G}}{\lambda'}{s} \in \mathcal{A}_h \text{ for some }s \ge 0\mid \barCP[A]{G}{\lambda'}(o) > t \right) > 1 - \exp\{-c_\lambda \cdot t\},
\end{equation*}
that is, given that the contact process with rate~$\lambda'$, started from~$A$ and confined to~$G$ spends more than~$t$ time units with~$o$ occupied, the probability that the same process on the full graph~$\tilde{G}$ reaches~$\mathcal{A}_h$ is higher than~$1-\exp\{-c_\lambda \cdot t\}$. 
\end{proposition}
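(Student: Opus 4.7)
The plan is a supercritical Galton--Watson comparison combined with an independence argument exploiting that the Poisson randomness driving the $G$-process is disjoint from the randomness driving the infection in $\mathcal T_h$ and on the edge $o\rho$. Write $S := \{s \ge 0 : o \in \CP[A]{G}{\lambda'}{s}\}$: this set is measurable with respect to the Poisson processes on $V$ and on edges within $G$, and has Lebesgue measure $\barCP[A]{G}{\lambda'}(o) > t$ on the conditioning event.

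\textbf{One-shot ignition.} I first prove that there exists $q = q_\lambda > 0$ such that, for $h$ large enough and every time $\tau$ at which $o$ is infected in the $\tilde G$-process, the $\tilde G$-process reaches $\mathcal{A}_h$ at some time in $[\tau, \tau + \lfloor h/2 \rfloor + 1]$ with conditional probability at least $q$, using only the Poisson randomness in $\mathcal T_h$ and on the edge $o\rho$ in this interval. The argument is a Galton--Watson comparison: with probability at least $(1-e^{-\lambda})e^{-1}$ an $o\to\rho$ arrow lies in $[\tau, \tau+1]$ and no recovery at $\rho$ occurs, so $\rho$ is infected at $\tau+1$; from there, tracking the infected population generation by generation (one generation per unit of time), it dominates a Galton--Watson process with per-particle mean offspring $m = d \cdot e^{-2}(1-e^{-\lambda}) > 1$ by \eqref{eq:choice_of_d}. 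Kesten--Stigum then gives that the generation-$\lfloor h/2\rfloor$ population is at least $(m/2)^{\lfloor h/2 \rfloor}$ with probability at least half the GW survival probability, uniformly in $h$ large.

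\textbf{Parallel attempts.} Partition time into disjoint blocks $J_k := [kK, (k+1)K)$ with $K := \lfloor h/2 \rfloor + 2$, and define $E_k$ as the event that for some $\tau \in S \cap J_k$ there is an $o\to\rho$ arrow at $\tau$, no recovery at $\rho$ in $[\tau, \tau+1]$, and the GW sub-process started at $\rho$ at time $\tau$ (using only $\mathcal{T}_h$-randomness in $[\tau, \tau+\lfloor h/2\rfloor+1] \subset J_k$) reaches a configuration with at least $(m/2)^{\lfloor h/2\rfloor}$ infected vertices at depth $\lfloor h/2\rfloor$. Since the Poisson processes on $o\rho$, at $\rho$, and inside $\mathcal{T}_h$ are independent of the $G$-randomness, the events $(E_k)$ are mutually independent conditional on the $G$-process, and a Poisson computation combined with the one-shot lemma gives $\mathbb{P}(E_k \mid G\text{-process}) \ge q \cdot (1 - e^{-\lambda' a_k})$, where $a_k := |S\cap J_k|$.

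\textbf{Combining and main obstacle.} Using independence, the elementary bound $1 - e^{-\lambda' a} \ge a(1-e^{-\lambda' K})/K$ for $a \in [0,K]$, and $\sum_k a_k > t$, I conclude
\[
\mathbb P\Big(\bigcap_k E_k^c \ \Big|\ G\text{-process}\Big) \;\le\; \exp\Big( - \tfrac{q(1-e^{-\lambda' K})}{K}\, t\Big),
\]
and any occurrence of $E_k$ forces $\mathcal{A}_h$ to be reached at some $s \in J_k$; integrating over the $G$-process on $\{|S| > t\}$ then proves the proposition. The technical heart is to obtain a constant $c_\lambda$ \emph{independent of $h$}: the block-based construction above gives $c_\lambda = q(1-e^{-\lambda' K})/K = O(1/h)$ because each attempt occupies a block of length $\Theta(h)$. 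Extracting a $h$-uniform constant requires running many parallel attempts per block, which can be achieved by routing successive $o\to\rho$ transmissions to disjoint subtrees rooted at deep descendants of $\rho$ (using that $\rho$ has $d^k$ descendants at depth $k$) and running independent Galton--Watson comparisons in each routed subtree. Controlling the routing cost while preserving a $h$-uniform per-attempt success probability is the technically delicate point I expect to be the main obstacle.
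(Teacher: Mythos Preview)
Your overall architecture---Galton--Watson comparison plus exploitation of the independence between the $G$-randomness and the $\mathcal T_h$-randomness---is the same as the paper's. You have also correctly located the only real issue: getting a constant $c_\lambda$ that does not depend on $h$.

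However, you are missing the observation that dissolves this issue at no cost. Your block-based argument treats each ignition attempt as occupying a time window of length $\Theta(h)$, because the Galton--Watson wave takes $\lfloor h/2\rfloor$ time units to reach depth $\lfloor h/2\rfloor$. But note that the wave is \emph{moving through the tree}: at time $i$ after launch it only touches vertices at levels $i-1$ and $i$. Concretely, the event $B(s)$ that the GW process launched from $\rho$ at time $s$ reaches population $(m/2)^{\lfloor h/2\rfloor}$ depends, at each level $i$ of $\mathcal T_h$, only on the Poisson marks in the time window $[s+i-1,s+i+1]$. Hence $B(s_1)$ and $B(s_2)$ use \emph{disjoint} space--time regions whenever $s_2>s_1+2$, and are therefore independent. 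This is exactly the ``many parallel attempts'' you are looking for, already built into the geometry, without any routing.

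With this in hand the proof is immediate: on $\{\barCP[A]{G}{\lambda'}(o)>t\}$ one finds (outside probability $e^{-c_\lambda t}$) at least $\lfloor c_\lambda t\rfloor$ times $s_1<\cdots<s_{\lfloor c_\lambda t\rfloor}$, pairwise separated by more than $2$, at which $o$ is infected in the $G$-process and an $o\to\rho$ arrow fires; the events $B(s_i)$ are then independent with probability uniformly bounded below, and a Chernoff bound finishes. Your routing scheme is therefore unnecessary, and since you left it as an unexecuted sketch with an acknowledged ``technically delicate point'', the proposal as written is incomplete.
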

We interpret the conditioning in the above statement as saying that the confined process has time~$t$ to attempt to ``ignite'' the infection on the tree~$\mathcal{T}_h$ (meaning fill it up sufficiently to enter the set~$\mathcal{A}_h$). We postpone the proof of this proposition to Section~\ref{ss:ignite}.

\begin{proposition}[From~$\mathcal{A}_h$ to~$\tilde{o}$]\label{prop:time_spent}
If~$h$ is large enough, then for any~$A \in \mathcal{A}_h$ we have
$$\mathbb{P}\left(\bar{\xi}^A_{\tilde{G},\lambda}(\tilde{o}) > h \right) > 1-\frac{1}{h} \quad \text{and} \quad\mathbb{P}\left(\bar{\xi}^A_{\tilde{G},\lambda}(v) > h \right) > 1-\frac{1}{h} \; \text{ for all }v \in V. $$
\end{proposition}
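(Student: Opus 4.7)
The plan is to combine Lemma~\ref{lem:persist0} with a concentration argument: starting from $A \in \mathcal{A}_h$, the process survives up to time $\cals(h)$ with probability $>1-\cals(h)^{-2}$, and during this enormous window I extract many sub-windows in each of which $\tilde o$ is infected for at least one unit of time with conditional probability bounded below by a universal constant. Since the number of sub-windows will exceed $h$ by many orders of magnitude, a Chernoff-type bound for a stochastically-dominating Bernoulli sequence will yield $\P(\barCP[A]{\tilde G}{\lambda}(\tilde o) > h) > 1 - 1/h$.

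The first key step is the claim that $\mathcal{P}(L(h)) \ge c_0$ for some constant $c_0 = c_0(\lambda) > 0$ and all $h$ large. A priori the definition of $L(h)$ only gives $\mathcal{P}(L(h)) < 1 - \cals(h)^{-1}$, which is compatible with $\mathcal{P}(L(h))$ being arbitrarily small; to rule this out I would use the strong Markov property at $\tau := \inf\{t : \CP[V \cup \mathcal{T}_h]{\hat G}{\lambda}{t}(v_{L(h)-1}) = 1\}$. The choice of $L(h)$ gives $\P(\tau < \infty) = \mathcal{P}(L(h)-1) \ge 1 - \cals(h)^{-1}$, and the fresh Poisson marks on $[\tau, \tau+1]$ provide, with conditional probability at least $e^{-1}(1 - e^{-\lambda})$, no recovery mark at $v_{L(h)-1}$ and at least one $\lambda$-labelled arrow $v_{L(h)-1} \to v_{L(h)}$ in that interval, which forces $v_{L(h)}$ to be infected for a positive duration; so $c_0 := \tfrac12 e^{-1}(1 - e^{-\lambda})$ works for $h$ large.

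Next, I would set $T := \mathcal{t}(h)$ and $N := \lfloor \cals(h)/T\rfloor$ (so $N \gg h$), split $[0, \cals(h)]$ into windows $I_j := [jT, (j+1)T]$, and for each $j$ let $\mathcal{F}_{jT}$ denote the $\sigma$-algebra generated by the graphical construction on $[0, jT]$ and let $B_j$ denote the event that $\tilde o$ is infected for total time at least $1$ during $I_j$. Using \eqref{eq:surv_subset} applied to the $(m/2)^{\lfloor h/2\rfloor}$ disjoint subtrees $\mathbb{T}^d_{\lfloor h/2\rfloor}$ embedded in $\mathcal{T}_h$ (as in the proof of Lemma~\ref{lem:persist0}) together with the constant lower bound from the previous paragraph and positive-correlation inequalities for the graphical construction, I would show that there is a constant $c = c(\lambda, d) > 0$ such that
\begin{align*}
\P\big(\CP[A]{\tilde G}{\lambda}{(j+1)T} \in \mathcal{A}_h,\; B_j \;\big|\; \mathcal{F}_{jT}\big) \ge c \cdot \mathds{1}\{\CP[A]{\tilde G}{\lambda}{jT} \in \mathcal{A}_h\}.
\end{align*}
Iterating, the indicator sequence $(\mathds{1}_{B_j})_{j=0}^{N-1}$ stochastically dominates an i.i.d.~Bernoulli$(c)$ sequence of length $N$, off an event of probability much smaller than $1/h$. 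Since $\barCP[A]{\tilde G}{\lambda}(\tilde o) \ge \sum_j \mathds{1}_{B_j}$ and $Nc \gg h$, a standard Chernoff bound finishes the proof of the first inequality.

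The statement for $v \in V$ is handled analogously: replace $B_j$ by the event that $v$ is infected for at least $1$ unit of time during $I_j$, and use \eqref{eq:lower_bound_reach} to move infection from $o$ (which, when $\mathcal{T}_h$ is ``full-like'', is infected with positive probability by $\rho$) to $v$ along $G$ in at most $\mathrm{dist}_G(o,v)$ time units with probability $\ge (e^{-2}(1 - e^{-\lambda}))^{\mathrm{dist}_G(o,v)}$, a positive constant depending only on $G$. A union bound over the finite vertex set $V$ yields the uniform claim. The main obstacle is the conditional lower bound displayed above: it requires coupling the restriction of the process on $\tilde G$ to $\mathcal{T}_h$ with the contact process on $\mathcal{T}_h$ alone (straightforward by monotonicity, since extra edges only aid the infection) and then carefully combining \eqref{eq:surv_subset} over the many disjoint height-$\lfloor h/2\rfloor$ subtrees with the one-step crossing argument of the second paragraph; this is where the bulk of the technical work should go.
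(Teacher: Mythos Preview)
Your scheme works for the second inequality (any fixed $v\in V$), but it breaks down for $\tilde o$, and the failure is intrinsic to the time scales, not a technicality.

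The problem is the displayed conditional lower bound
\[
\P\big(\CP[A]{\tilde G}{\lambda}{(j+1)T} \in \mathcal{A}_h,\; B_j \;\big|\; \mathcal{F}_{jT}\big) \ge c \cdot \mathds{1}\{\CP[A]{\tilde G}{\lambda}{jT} \in \mathcal{A}_h\}
\]
with $T=\mathcal{t}(h)$. No such constant $c>0$ exists; in fact the left-hand side tends to $0$ as $h\to\infty$. The reason is that a single crossing of $\mathcal{L}_h$ (from $v_0$ to $\tilde o$) has probability $\mathcal{p}(L(h))\le \cals(h)^3/S(h)$ by Lemma~\ref{lem:prob_crossing}, and $S(h)\ge \exp\{c_{\mathbb T}d^h\}$. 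So the expected number of successful crossings up to time $\cals(h)$ is at most of order $\cals(h)^4/S(h)\to 0$; a fortiori the probability of even one crossing within a window of length $T=\mathcal{t}(h)\ll\cals(h)$ is $o(1)$. Your bound $\mathcal{P}(L(h))\ge c_0$ is correct, but it concerns reaching $\tilde o$ \emph{over unbounded time}, and the relevant time scale for that is $S(h)$, not $\mathcal{t}(h)$ or $\cals(h)$. The paper actually proves (see~\eqref{eq:time_spent_aux}) that with probability $>1-1/(4h)$ the process started from $V\cup\mathcal{T}_h$ does \emph{not} touch $\tilde o$ at all in $[0,\cals(h)]$, which directly contradicts your intended Bernoulli domination on $[0,\cals(h)]$.

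The paper handles $\tilde o$ by a completely different mechanism. First, by Lemma~\ref{lem:persist0} and Proposition~\ref{prop:persist_couple}, the process from $A\in\mathcal{A}_h$ couples with the process from $V\cup\mathcal{T}_h$ at time $\cals(h)$ with probability $>1-4\cals(h)^{-2}$ (Lemma~\ref{lem:time_spent0}). Then, for the process from $V\cup\mathcal{T}_h$, the definition of $L(h)$ is used in a quantitative way: since $\mathcal{P}(L(h)-1)\ge 1-\cals(h)^{-1}$, one shows that the \emph{total} time $|I|$ that $v_{L(h)-2}$ is infected (in the process with $v_{L(h)-1},\tilde o$ removed) satisfies $\P(|I|<h^2)\le e^{\lambda h^2}\cals(h)^{-1}$, via the identity $\P(\bar\xi(v_{L(h)-1})=0)=\E[e^{-\lambda|I|}]$. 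On $\{|I|\ge h^2\}$ one then extracts $\sim h^2$ disjoint unit opportunities to push the infection the last two steps to $\tilde o$, which yields $\bar\xi(\tilde o)>h$ with probability $>1-e^{-h}$. The point is that $|I|$ accumulates over the entire lifetime (order $S(h)$) of the process, not over $[0,\cals(h)]$; your windowing forfeits precisely this.
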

The proof of this proposition will be carried out in Section~\ref{ss:time_spent}.

\begin{proposition}\label{prop:no_go} For any~$\lambda' < \lambda$, if $h$ is large enough depending on $\lambda'$, then
\begin{equation}\mathbb{P}\left(\barCP[A]{\tilde{G}}{\lambda'}(\tilde{o}) > 0 \right) < \cals(h)^{-1} \quad \text{for any } A \subset V \cup \mathcal{T}_h.\label{eq:bound_Pl}\end{equation}
\end{proposition}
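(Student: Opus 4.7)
Proof plan. The idea is that any rate-$\lambda'$ infection from $V \cup \mathcal{T}_h$ reaching $v_{L(h)}$ must accomplish a $\lambda'$-crossing of the line $\mathcal{L}_h$ of length $L(h)$, and by Lemma~\ref{lem:line_lambda} this crossing is exponentially less likely at rate $\lambda'$ than at rate $\lambda$; the contraction factor $\eta^{-L(h)}$ is what we will exploit, provided we first establish a sufficiently strong (linear-in-$d^h$) lower bound on $L(h)$.

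By monotonicity in $A$ and in the graph it suffices to bound $\mathcal{P}'(L(h)) := \mathbb{P}(\bar\xi^{V \cup \mathcal{T}_h}_{\hat G,\lambda'}(v_{L(h)}) > 0)$. The vertex $v_0$ disconnects $V \cup \mathcal{T}_h$ from $v_{L(h)}$ in $\hat G$, so any $\lambda'$-infection path from $(V \cup \mathcal{T}_h) \times \{0\}$ to $v_{L(h)} \times [0,\infty)$ visits $v_0$, and after its last visit to $v_0$ lies entirely within $\mathcal{L}_\infty$. Letting $T_{\lambda'}$ denote the a.s.\ finite extinction time and picking a cutoff $T^*$, this decomposition combined with Lemma~\ref{lem:exp_decay} applied to $\mathcal{L}_\infty$ at rate $\lambda'$ and then Lemma~\ref{lem:line_lambda} yields
$$\mathcal{P}'(L(h)) \;\leq\; \mathbb{P}(T_{\lambda'} > T^*) \;+\; e(T^*+1)\,\eta^{-L(h)}.$$
For the first term I would use graphical monotonicity $T_{\lambda'} \leq T_\lambda$ together with the metastability bound~\eqref{eq:background_tree1} (via Lemma~\ref{lem:inv_markov}) to obtain $\mathbb{E}[T_\lambda] \leq \exp\{C_\lambda d^h\}$ on $\hat G$; Markov's inequality with $T^* := \exp\{2 C_\lambda d^h\}$ then gives $\mathbb{P}(T_{\lambda'} > T^*) < \tfrac12\cals(h)^{-1}$ for $h$ large.

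For the second term we then need $L(h)\log\eta \gtrsim 2C_\lambda d^h$, i.e., a linear-in-$d^h$ lower bound $L(h) \geq c_\lambda d^h$. I would establish this by extending the proof of Lemma~\ref{lem:well_def}: for $\ell \leq c_\lambda d^h$, partition the metastable interval $[0,\exp\{C_\lambda d^h\}]$ into disjoint blocks of length $\ell + \mathrm{diam}(G) + h$; conditional on the tree being non-empty at the start of a block (an event of probability close to one by~\eqref{eq:metastability}), a ballistic infection path running through an infected tree vertex and then along the line reaches $v_\ell$ within the block with independent probability at least $(e^{-2}(1-e^{-\lambda}))^{\ell+O(1)}$ by~\eqref{eq:lower_bound_reach}; summing failure probabilities across the $\exp\{C_\lambda d^h\}/(\ell+O(h))$ blocks yields $\mathcal{P}(\ell) \geq 1-\cals(h)^{-1}$ and hence $L(h) > c_\lambda d^h$, which closes the proof once combined with the bound above.

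The main obstacle is the quantitative balance of constants: the brute-force argument above only gives $c_\lambda \leq C_\lambda/c_1$ with $c_1 := 2 - \log(1-e^{-\lambda})$, while the condition $c_\lambda\log\eta \gtrsim C_\lambda$ can fail when $\lambda'$ is close to $\lambda$ and $\log\eta$ is correspondingly small. The resolution I would pursue is to sharpen the lower bound to $L(h) \geq C_\lambda d^h/(c'_\lambda + \varepsilon)$ for any prescribed $\varepsilon>0$ (using the sharp large-deviation lower bound $\mathbb{P}(\bar\xi^{\{0\}}_{\{0,1,\ldots\},\lambda}(\ell) > 0) \geq e^{-(c'_\lambda+\varepsilon)\ell}$ for the subcritical contact process on the half-line) and to simultaneously replace the crude bound $\mathbb{P}(\bar\xi^{\{0\}}_{\{0,1,\ldots\},\lambda}(L(h)) > 0) \leq 1$ by $\mathbb{P}(\bar\xi^{\{0\}}_{\{0,1,\ldots\},\lambda}(L(h)) > 0) \leq e^{-c'_\lambda L(h)}$ in the crossing bound; combined with $c'_{\lambda'} \geq c'_\lambda + \log\eta$ (which is the large-$\ell$ content of Lemma~\ref{lem:line_lambda}), the closure condition becomes $c'_{\lambda'}/(c'_\lambda+\varepsilon) > 1$, which holds for $\varepsilon$ small enough since $\log\eta > 0$.
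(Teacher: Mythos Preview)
Your decomposition into an extinction-time cutoff plus a line-crossing bound is the right shape, and you correctly spot the constant-balancing hazard. But the resolution you sketch still has a gap: you need the \emph{upper} bound $\mathbb{E}[T_\lambda] \le \exp\{C_\lambda d^h\}$ on~$\hat G$ with the \emph{same} exponential constant that you then use as a \emph{lower} bound for the metastable window in your $L(h)$ argument. Neither of the results you cite supplies this. Equation~\eqref{eq:background_tree1} gives the sharp exponent only for the tree~$\mathbb{T}^d_h$ in isolation, not for~$\hat G$, and Lemma~\ref{lem:inv_markov} converts an expectation into a survival-probability lower bound, not the other way around. Passing from the tree to~$\hat G$ for the upper bound is not free: infections escaping into~$G$ and the half-line can re-enter~$\mathcal{T}_h$ and restart it, and ruling out a strictly larger exponent on~$\hat G$ requires a separate (nontrivial) argument. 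Without it your closure condition is not $c'_{\lambda'}/(c'_\lambda+\varepsilon) > 1$ but $c'_{\lambda'}/(c'_\lambda+\varepsilon) > C_+/C_-$ with $C_+ \ge C_-$ a priori unknown, and you are back where you started.

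The paper sidesteps this whole balancing act by exploiting the \emph{implicit} definition of~$L(h)$. Lemma~\ref{lem:prob_crossing} shows that, because $L(h)$ is chosen so that $\mathbb{P}(\bar\xi^{V\cup\mathcal{T}_h}_{\tilde G,\lambda}(\tilde o)=0) \ge \cals(h)^{-1}$, the single-attempt crossing probability at rate~$\lambda$ must satisfy $\mathcal{p}_\lambda(L(h)) \le \cals(h)^3/S(h)$, where $S(h)$ is the expected extinction time on~$\tilde G$. The proof of Proposition~\ref{prop:no_go} then takes the cutoff $T^* = 2\cals(h)\,S(h)$ and bounds the crossing term by
\[
(T^*+1)\,\mathcal{p}_{\lambda'}(L(h)) \;\le\; 7\cals(h)\,S(h)\cdot \eta^{-L(h)}\mathcal{p}_\lambda(L(h)) \;\le\; 7\cals(h)^4\,\eta^{-L(h)},
\]
so that $S(h)$ cancels completely. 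No upper bound on $S(h)$ is ever needed, and the only lower bound on $L(h)$ required is the crude $L(h) \ge d^{3h/4}$ of~\eqref{eq:lower_bound_L}, which is far more than enough to make $\eta^{-L(h)} \ll \cals(h)^{-5}$. The key idea you are missing is this cancellation, which comes precisely from the way $L(h)$ was tuned to~$\lambda$ in~\eqref{eq:def_of_L_cross}.
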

The proof of this proposition will be done in Section~\ref{ss:lower_lambda}.

\begin{proof}[Proof of Theorem~\ref{thm:tool}]
It follows from the construction that~$\tilde{G}$ satisfies the stated degree properties. The inequality~\eqref{eq:pass} follows from Propositions~\ref{prop:ignition} and~\ref{prop:time_spent}, and~\eqref{eq:not_pass} follows from Proposition~\ref{prop:no_go}.
\end{proof}

\section{Proofs of results in Section~\ref{sec:proof_thm}}
\label{s:proofs_of_props}
We now turn to the proofs of the three propositions of the previous section. In Section~\ref{ss:ignite}, we will prove Proposition~\ref{prop:ignition}. In Section~\ref{ss:preliminary}, we will give some bounds involving the function~$L(h)$, as well as a key proposition involving coupling of the contact process on~$\tilde{G}$, Proposition~\ref{prop:persist_couple}. Next, Section~\ref{ss:time_spent} contains the proof of Proposition~\ref{prop:time_spent}, and Section~\ref{ss:lower_lambda} contains the proof of Proposition~\ref{prop:no_go}.

\subsection{Proof of Proposition~\ref{prop:ignition}}
\label{ss:ignite}
\begin{proof}[Proof of Proposition~\ref{prop:ignition}]
	We begin with some definitions. For~$0 \le i \le h$, let~$T(i)$ denote the set of vertices of~$\mathcal{T}_h$ at distance~$i$ from the root~$\rho$. Using the graphical construction of the contact process with parameter~$\lambda' \ge \lambda$, we will now define random sets~$\mathcal{Z}_{\lambda'}(0),\ldots, \mathcal{Z}_{\lambda'}({\lfloor  h/2\rfloor})$ with~$\mathcal{Z}_{\lambda'}(i)\subset T(i)$ for each~$i$. We set~$\mathcal{Z}_{\lambda'}(0) := \{\rho\}$. Assume that~$\mathcal{Z}_{\lambda'}(i)$ has been defined, let~$z$ be a vertex of~$T(i+1)$ and let~$z'$ be the neighbour of~$z$ in~$T(i)$. We include~$z$ in~$\mathcal{Z}_{\lambda'}({i+1})$ if~$z' \in \mathcal{Z}_{\lambda'}(i)$ and, in the time interval~$[i,i+1]$, there are no recovery marks on~$z'$ or~$z$, and there is a transmission arrow from~$z'$ to~$z$. Letting~$Z_{\lambda'}(i):= |\mathcal{Z}(i)|$ for each~$i$, it is readily seen that~$(Z_{\lambda'}(i): 0 \leq i \leq \lfloor h/2\rfloor)$ is a branching process. Its offspring distribution is equal to the law of~$U \cdot W$, where~$U \sim \text{Bernoulli}(e^{-1})$ and~$W \sim \text{Binomial}(d,e^{-1}\cdot (1-e^{-\lambda'}))$ are independent. The expectation of this distribution is larger than~$m_\lambda > 1$. For this reason, there exists~$\sigma_\lambda > 0$ such that the event
	$$B_{\lambda'} := \left\{Z_{\lambda'}(\lfloor h/2 \rfloor) > (m_\lambda/2)^{\lfloor h/2 \rfloor}\right\}$$
	has
	\begin{equation*}
	\mathbb{P}\left( B_{\lambda'} \right) > \sigma_\lambda \quad \text{ for all }\lambda' \ge \lambda \text{ and }h \in \mathbb{N}.
	\end{equation*}
	Finally note that
	$$B_{\lambda'} \subset \left\{\CP[\{\rho\}]{\mathcal{T}_h}{\lambda'}{\lfloor h/2 \rfloor} \in \mathcal{A}_h\right\}.$$
	
	Now define~$B_{\lambda'}(0) := B_{\lambda'}$ and, for~$t \in [0,\infty)$, define~$B_{\lambda'}(t)$ as the time translation of~$B_{\lambda'}$, so that time~$t$ becomes the time origin (that is,~$B_{\lambda'}(t)$ is defined by using the graphical construction of the contact process on the time intervals~$[t,t+1]$,~$[t+1,t+2]$,~$\ldots$,~$[t+\lfloor h/2 \rfloor -1, t+ \lfloor h/2 \rfloor]$). We evidently have
	\begin{equation}\label{eq:indep_trans}\mathbb{P}(B_{\lambda'}(t)) = \mathbb{P}(B_{\lambda'}) > \sigma_\lambda \quad \text{ for any~$t$,}\end{equation} and moreover,
	\begin{equation}
	\label{eq:on_trans} \left\{\rho \in \CP[A]{\tilde{G}}{\lambda'}{t} \right\} \cap B_{\lambda'}(t)  \subset \left\{\CP[A]{\tilde{G}}{\lambda'}{t + \lfloor h/2\rfloor} \in \mathcal{A}_h \right\}
	\end{equation}
	for any~$A$. It will be useful to note that, if~$t_1, t_2 \ge 0$ with~$t_2 > t_1 + 2$, then~$B_{\lambda'}(t_1)$ and~$B_{\lambda'}(t_2)$ are independent.
	
	Now, fix~$t>0$ and condition on the event~$\left\{ \barCP[A]{G}{\lambda'}(o) > t\right\}$ occurs. Note that this  event only involves the graphical construction of the contact process on~$G$; in particular, the Poisson processes involving vertices and edges of~$\mathcal{T}_h$, or the edge~$\{o,\rho\}$, are still unrevealed. Then, by elementary properties of Poisson processes, there exists~$c_\lambda > 0$ (depending only on~$\lambda$) such that (uniformly on~$\lambda' \ge \lambda$) outside probability~$\exp\{-c_\lambda\cdot t\}$, we can find random times~$s_1 < \ldots < s_{\lfloor c_\lambda t \rfloor}$ separated from each other by more than two units, and such that for each~$i$,~$o \in \CP[A]{G}{\lambda'}{s_i}$ and there is a transmission arrow from~$(o,s_i)$ to~$(\rho,s_i)$. If this is the case, and if~$B_{\lambda'}(s_i)$ also occurs for some~$i$, we then get~$\CP[A]{\tilde{G}}{\lambda'}{s_i + \lfloor h/2\rfloor} \in \mathcal{A}_h$, by~\eqref{eq:on_trans}. The desired result now follows from independence between the events~$B_{\lambda'}(s_i)$, together with~\eqref{eq:indep_trans} and a Chernoff bound.
	\end{proof}

\subsection{Preliminary bounds}
\label{ss:preliminary}
In this section we will prove that $$ d^{\frac{3h}{4}} \le L(h) \le d^{2h}$$ for large enough $h$. These bounds will be instrumental for proving Propositions \ref{prop:time_spent} and \ref{prop:no_go}. We first give an upper bound involving the extinction time of the contact process on~$\tilde{G}$, in terms of the length~$L(h)$.

\begin{lemma} \label{lem:extt} We have
	$$\lim_{h \to \infty} \mathbb{P}\left( \CP[\tilde{V}]{\tilde{G}}{\lambda}{\exp\{d^{\frac32h}\}\cdot (\log L(h))^2}\neq \varnothing \right) = 0,$$
	that is, the extinction time of the contact process on~$\tilde{G}$ started from full occupancy is smaller than~$\exp\{d^{\frac32h}\}\cdot (\log L(h))^2$ with high probability as~$h \to \infty$.
\end{lemma}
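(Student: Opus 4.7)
The plan is to upper bound the expected extinction time $\mathbb{E}[\tau]$ of the process $(\CP[\tilde V]{\tilde G}{\lambda}{t})_{t \ge 0}$ by $\exp\{c_1 d^h\}$ for some constant $c_1 = c_1(\lambda, d)$, and then conclude via Markov's inequality: since $c_1 d^h \ll d^{3h/2}$ as $h \to \infty$, this yields
\begin{equation*}
\mathbb{P}\!\bigl(\CP[\tilde V]{\tilde G}{\lambda}{T} \neq \varnothing\bigr) \le \frac{\mathbb{E}[\tau]}{T} \xrightarrow{h \to \infty} 0,
\end{equation*}
where $T = \exp\{d^{3h/2}\}(\log L(h))^2$.

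To bound $\mathbb{E}[\tau]$, I would decompose the life of the process into alternating \emph{excursions} (maximal intervals during which the ``fat'' subgraph $K := G \cup \mathcal{T}_h$ is non-empty) and \emph{gaps} (maximal intervals during which $K$ is empty while $\mathcal{L}_h$ may still carry infection). Since $|K| = O(d^h)$, a standard ``all recover together'' estimate shows that during an excursion the time until $K$ next becomes empty is stochastically dominated by a geometric random variable with mean at most $\exp\{c_1 d^h\}$, uniformly in the configuration of the line: in any unit time interval, the event that every vertex of $K$ has a recovery mark and that no transmission arrow within $K$ or from $v_0$ to $\rho$ appears has probability at least $\exp\{-C|K|\}$, and under this event $K$ is empty at the end of the interval irrespective of the state of $\mathcal{L}_h$.

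For a gap starting from any $B \subset \mathcal{L}_h$, the expected total time that $v_0$ is infected is bounded by $\sum_{v \in B}\mathbb{E}\!\left[\barCP[\{v\}]{\mathcal{L}_h}{\lambda}(v_0)\right]$, and using Corollary~\ref{cor:exp_decay_space} (together with the observation that conditional on $v_0$ being ever reached from $\{v\}$, the subsequent expected total time at $v_0$ is $O(1)$) each summand is $O(e^{-c_\L \cdot \mathrm{dist}(v, v_0)})$, summing to a universal constant $C^*$. Jensen's inequality then gives that the probability of no $v_0 \to \rho$ transmission during a gap is at least $e^{-\lambda C^*} > 0$, so the total number of excursions is stochastically dominated by a geometric random variable with parameter $\ge e^{-\lambda C^*}$; by Lemma~\ref{lem:more_than_log} (and monotonicity to reduce to full occupancy on $\mathcal{L}_h$) each gap has expected length $O((\log L(h))^2)$. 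Combining these bounds yields $\mathbb{E}[\tau] = O(\exp\{c_1 d^h\})$, proving the lemma. The main obstacle is the excursion-length bound, where one must rule out that feedback from $\mathcal{L}_h$ significantly prolongs $K$'s survival; the proposed uniform (in the state of $\mathcal{L}_h$) geometric domination is how I would handle this.
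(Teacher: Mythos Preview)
Your approach is correct in outline and leads to the same conclusion, but it is genuinely different from the paper's proof. The paper does \emph{not} bound $\mathbb{E}[\tau]$ and apply Markov's inequality; instead it works directly with the survival probability via a block argument. It divides $[0,T]$ into $\lfloor \exp\{d^{3h/2}\}\rfloor$ blocks of length $(\log L(h))^2$ and defines, in each block, the event $E_i$ that (i) every vertex of $V\cup\mathcal{T}_h$ recovers before emitting any arrow within time~$1$, and (ii) the process on $\mathcal{L}_h$ started from full occupancy dies within $(\log L(h))^2$ without ever infecting~$\rho$. These events are independent across blocks and each has probability at least $\delta\cdot\exp\{-c\,d^{h+2}\}$; since any $E_i$ forces extinction, the probability that none occurs is at most $(1-\delta\exp\{-c\,d^{h+2}\})^{\exp\{d^{3h/2}\}}\to 0$.

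Compared to this, your excursion--gap decomposition is more structural and yields a stronger intermediate statement (an expectation bound on $\tau$), at the cost of needing a Wald-type argument and a subcritical two-point-function estimate on~$\mathbb{Z}$. Two small points to tighten: (a) the summability $\sum_{v\in\mathcal{L}_h}\mathbb{E}[\barCP[\{v\}]{\mathcal{L}_h}{\lambda}(v_0)]\le C^*$ is really the statement that the expected cluster size in the subcritical contact process on~$\mathbb{Z}$ is finite, which follows from~\eqref{eq:exp_decay_time} via duality rather than directly from Corollary~\ref{cor:exp_decay_space}; (b) Lemma~\ref{lem:more_than_log} is a probability statement, and the expectation bound on gap lengths you use comes instead from~\eqref{eq:exp_decay_time} (giving $\mathbb{E}[\text{gap}]=O(\log L(h))$). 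Neither affects the validity of the argument. The paper's route is shorter and avoids these auxiliary facts; yours gives more information.
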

\begin{proof}
	Let~$E_{0}'$ be the event that each vertex in~$V \cup \mathcal{T}_h$ has a recovery mark before it sends out any transmission arrow, and before time~$1$. Since all vertices of~$V \cup \mathcal{T}_h$ have degree at most~$d+1$, we have
	$$\mathbb{P}(E_0') \ge \left((1-e^{-1})\cdot e^{-(d+1)\lambda}\right)^{|V \cup \mathcal{T}_h|} \ge  \left((1-e^{-1})\cdot e^{-(d+1)\lambda}\right)^{d^{h+2}},$$
	if~$h$ is large enough (since~$|\mathcal{T}_h| < d^{h+1}$ and~$|V|$ is fixed as~$h \to \infty$). Next, let~$E_0''$ denote the event that the contact process on~$\tilde{G}$ started from~$\mathcal{L}_h$ infected dies out before time~$(\log L(h))^2$, and never infects the root~$\rho$ of~$\mathcal{T}_h$. That is,
	$$E_0'' := \left\{\CP[\mathcal{L}_h]{\tilde{G}}{\lambda}{(\log L(h))^2} = \varnothing,\; \rho \notin \CP[\mathcal{L}_h]{\tilde{G}}{\lambda}{t} \text{ for all }t  \right\}.$$
	The probability of~$E_0''$ is the same as the probability that a contact process on the line segment~$\{-1,0,\ldots, L_G(h)\}$, with rate~$\bar{\lambda}$ and initial configuration~$\{0,\ldots, L(h)\}$, dies out before time~$(\log L(h))^2$ and never infects vertex~$-1$. Therefore, by~\autoref{lem:more_than_log} and~\autoref{basic_line}, we have
	$$\mathbb{P}(E_0'') > \delta > 0$$
	for all~$h$. Let~$E_0 := E_0' \cap E_0''$; since~$E_0'$ and~$E_0''$ have different supports in the graphical representation, they are independent and hence
	$$\mathbb{P}(E_0) >  \delta\left((1-e^{-1})\cdot e^{-(d+1)\lambda}\right)^{d^{h+2}}.$$
	
	For~$i \in \{1,\ldots,\lfloor \exp\{d^{\frac32h}\} \rfloor\}$, let~$E_i$ be the time translation of event~$E_0$ to the graphical construction on the time interval
	$$[i(\log L(h))^2,\;(i+1)(\log L(h))^2].$$
	Finally, noting that~$E_0,E_1,\ldots$ are independent and
	$$E_i \subset \left\{\CP[\tilde{V}]{\tilde{G}}{\lambda}{(i+1)(\log L(h))^2} = \varnothing \right\},$$
	we have
	\begin{align*}&\mathbb{P}\left(\CP[\tilde{V}]{\tilde{G}}{\lambda}{\exp\{d^{\frac32h}\}\cdot(\log L(h))^2} \neq \varnothing \right) \\&\leq \mathbb{P}((E_0)^c)^{\lfloor \exp\{d^{\frac32h}\} \rfloor}\\&\leq \exp\left\{ -\lfloor \exp\{d^{\frac32h}\} \rfloor \cdot  \delta\left((1-e^{-1})\cdot e^{-(d+1)\lambda}\right)^{d^{h+2}} \right\} \xrightarrow{h \to \infty}0.\end{align*}
\end{proof}

We now proceed to an upper bound on~$L(h)$.
\begin{lemma}\label{prop:bounds}
If~$h$ is large enough we have
\begin{equation}\label{eq:main_bounds}
L(h) \le d^{2h}
\end{equation}
\end{lemma}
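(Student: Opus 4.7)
The strategy is to exhibit $\ell_0 \le d^{2h}$ with $\mathcal{P}(\ell_0) < 1 - \cals(h)^{-1}$, which by the definition of $L(h)$ yields $L(h) \le d^{2h}$. Set $\ell_0 := d^{2h}$. First I reduce to a finite subgraph: since $\hat{G}$ is a tree and $v_{\ell_0}$ separates the tail $\{v_{\ell_0+1}, v_{\ell_0+2}, \ldots\}$ from the initial infection set $V \cup \mathcal{T}_h$, any $\lambda$-infection path from $(V \cup \mathcal{T}_h) \times \{0\}$ ending at the first infection time of $v_{\ell_0}$ cannot visit $v_{\ell_0+k}$ for any $k \ge 1$. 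Thus $\mathcal{P}(\ell_0)$ equals the same probability computed on the finite subgraph $G^* \subset \hat{G}$ obtained by deleting $v_{\ell_0+1}, v_{\ell_0+2}, \ldots$

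Set $T := \exp\{d^{3h/2}\} \cdot (\log \ell_0)^2$ and let $\tau^*$ denote the extinction time of the contact process on $G^*$ from full occupancy. I decompose
$$
\mathcal{P}(\ell_0) \le \mathbb{P}(\tau^* > T) + \mathbb{P}\big(v_{\ell_0}\text{ is infected during }[0,T]\text{ in }G^*\big).
$$
The first term is handled by adapting the proof of Lemma~\ref{lem:extt}: that argument uses $L(h)$ only as the length of the line segment in the killing event $E_0''$, so replacing $L(h)$ by $\ell_0$ works because Lemmas~\ref{lem:more_than_log} and~\ref{basic_line} still give $\mathbb{P}(E_0'') > \delta > 0$ for $\ell_0$ large. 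The result is $\mathbb{P}(\tau^* > T) \le \exp\{-\delta\exp\{d^{3h/2} - Cd^{h+2}\}\}$, which for $h$ large (using $3h/2 > h+2$) is doubly exponentially smaller than $\cals(h)^{-1} = \exp\{-d^{\sqrt{h}}\}$.

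For the second term, any $\lambda$-infection path in $G^*$ from $V \cup \mathcal{T}_h$ to $v_{\ell_0}$ occurring during $[0,T]$ must cross the edge $\{\rho, v_-\}$ at some time $t_0 \in [0, T]$ and subsequently propagate along $\L_{\ell_0}$, so the event is contained in $\{\{v_-\} \times [0, T] \rightsquigarrow v_{\ell_0}\text{ on }\L_{\ell_0}\}$, whose probability is at most $(T+1) \exp\{-c_\L \ell_0\}$ by Corollary~\ref{cor:exp_decay_space}. Since $T \le \exp\{2d^{3h/2}\}$ and $d^{2h} \gg d^{3h/2}$, this is at most $\exp\{-c_\L d^{2h}/2\}$ for $h$ large, also far smaller than $\cals(h)^{-1}$. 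Combining the two bounds, $\mathcal{P}(d^{2h}) < \cals(h)^{-1} < 1 - \cals(h)^{-1}$ for $h$ large, as desired. The only delicate step is re-running the argument of Lemma~\ref{lem:extt} on $G^*$, which is routine since $L(h)$ entered there only as a line-length parameter.
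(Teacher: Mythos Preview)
Your proof is correct and follows essentially the same approach as the paper: both combine the extinction-time estimate of Lemma~\ref{lem:extt} with the crossing bound of Corollary~\ref{cor:exp_decay_space} to show that $v_{d^{2h}}$ is very unlikely ever to be reached. The only difference is organizational: the paper argues by contradiction (assuming $L(h)>d^{2h}$ and working on~$\tilde G$ itself), whereas you argue directly by constructing the auxiliary graph~$G^*$ with segment length exactly~$d^{2h}$ and re-running the extinction argument there; your direct version is arguably cleaner since it avoids invoking Lemma~\ref{lem:extt} on~$\tilde G$ before $L(h)\to\infty$ has been established.
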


\begin{proof}
Define
$$F_1:= \left\{\CP[V \cup \mathcal{T}_h]{\tilde{G}}{\lambda}{\exp\{d^{\frac32h}\}\cdot (\log L(h))^2} = \varnothing\right\}.$$
Recall that~$v_{L(h)-1}$ denotes the neighbor of~$\tilde{o}$ in~$\mathcal{L}_h$, and let~$F_2$ be the event that there is no infection path starting from~$(\rho,s)$ for some~$s \le \exp\{d^{\frac32h}\}\cdot (\log L(h))^2$, ending at~$(v_{L(h)-1},t)$ for some~$t > s$, and entirely contained in~$\mathcal{L}_h \cup \{\rho\}$. It is easy to see that
$$F_1 \cap F_2 \subset \left\{ \CP[V \cup \mathcal{T}_h]{\tilde{G}}{\lambda}{t}(v_{L(h)-1}) = 0 \; \text{for all }t \ge 0 \right\}.$$
By Lemma~\ref{lem:extt} we have~${\displaystyle \lim_{h \to \infty} \mathbb{P}(F_1) = 1}$
and by~\autoref{cor:exp_decay_space} we have
$$\mathbb{P}(F_2) \ge 1  - \left( \exp\{d^{\frac32h}\}\cdot (\log L(h))^2 + 1 \right) \cdot \exp\{-c_\mathbb{L} \cdot L(h)\}.$$
This shows that, if we had~$L(h) >d^{2h}$, we would get
$$\mathbb{P}\left( \CP[V \cup \mathcal{T}_h]{\tilde{G}}{\lambda}{t}(v_{L(h)-1}) = 0\; \text{for all }t \ge 0\right) \ge \mathbb{P}(F_1 \cap F_2) \xrightarrow{h \to \infty} 1.$$
On the other hand, the definition of~$L(h)$ implies that
$$\mathbb{P}\left({\xi}^{V \cup \mathcal{T}_h}_{\tilde{G},\lambda;t}(v_{L(h)-1}) > 0 \text{ for some }t \ge 0\right) \ge 1 -\cals(h)^{-1} \xrightarrow{h \to \infty} 1,$$
a contradiction.
\end{proof}

The following guarantees that if the contact process with some initial condition remains active for~$\cals(h)$ time in~$\tilde{G}$, then it is highly likely to coincide with the process started from full occupancy. This, in turn, will be applied in the proof of Lemma \ref{lem:prob_crossing} below which is an important step towards obtaining lower bounds on $L(h)$.

\begin{proposition}\label{prop:persist_couple}
If~$h$ is large enough, for any~$A \subset \tilde{V}$ we have
\begin{equation*}
\mathbb{P}\left(\CP[A]{\tilde{G}}{\lambda}{\cals(h)} \neq \varnothing,\; \CP[A]{\tilde{G}}{\lambda}{\cals(h)} \neq \CP[\tilde{V}]{\tilde{G}}{\lambda}{\cals(h)} \right) < {\cals(h)^{-2}}.
\end{equation*}
\end{proposition}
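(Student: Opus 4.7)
The plan is to show that if $\xi^A$ survives until time $\cals(h)$, then, using the monotone coupling $\xi^A_s \subset \xi^{\tilde V}_s$ built from the common graphical construction, both processes have with high probability coincided by time $\cals(h)$. The idea is to split $[0,\cals(h)]$ at an intermediate time $t_* = K \cdot \mathcal{t}(h)$ with $K = \Theta(d^{\sqrt{h}})$; observe that $t_* \ll \cals(h)$. On $[0, t_*]$ (Phase~1) I would show that $\xi^A$ has either died out or entered $\mathcal{A}_h$; on $[t_*, \cals(h)]$ (Phase~2) I would couple $\xi^A$ with $\xi^{\tilde V}$ via the $\mathcal{T}_h$-confined dynamics.

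Phase~1 is essentially a reuse of the branching-process argument from the proof of Proposition~\ref{prop:ignition}. Whenever $\xi^A_s \cap \mathcal{T}_h \neq \varnothing$, a single depth-$\lfloor h/2 \rfloor$ descent from an occupied tree vertex places $\xi^A_{s + \lfloor h/2\rfloor}$ into $\mathcal{A}_h$ with probability at least $\sigma_\lambda$. If instead $\xi^A_s \subset G \cup \mathcal{L}_h$ at some moment, then since $G$ is finite and $\mathcal{L}_h$ is a subcritical segment of length $L(h) \le d^{2h} \ll \cals(h)$, Corollary~\ref{cor:exp_decay_space} combined with Lemma~\ref{lem:more_than_log} and Lemma~\ref{basic_line} ensure that within a time window of polynomial length in $L(h)$ the process either dies or first re-enters $\mathcal{T}_h$. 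Running $K$ disjoint ignition attempts yields a failure probability of the form $(1 - \sigma_\lambda)^K \le \cals(h)^{-3}$.

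Phase~2 would iterate \eqref{eq:surv_subset} on the $\mathcal{T}_h$-confined process $\zeta$, built on the restriction of the graphical construction to edges of $\mathcal{T}_h$. Across $K$ disjoint windows of length $\mathcal{t}(h)$, the confined processes starting from $\xi^A_s \cap \mathcal{T}_h$ and from $\mathcal{T}_h$ fail to couple in a single window with probability at most $1 - c_\mathbb{T}$, so the overall coupling failure is at most $(1 - c_\mathbb{T})^K \le \cals(h)^{-3}$; joint survival through $\cals(h)$ is delivered by Lemma~\ref{lem:persist0}. Once the confined coupling produces a common trajectory $\zeta^*_s$ contained in both $\xi^A_s$ and $\xi^{\tilde V}_s$, the remaining discrepancy $\xi^{\tilde V \setminus A}_s \setminus \xi^A_s$, where I use additivity $\xi^{\tilde V}_s = \xi^A_s \cup \xi^{\tilde V \setminus A}_s$, is shown to vanish: outside $\mathcal{T}_h$ it evolves as a subcritical contact process on $G \cup \mathcal{L}_h$ that dies out in time $\mathrm{poly}(L(h)) \ll \cals(h)$, again using Lemma~\ref{basic_line} and Lemma~\ref{lem:more_than_log}. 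A union bound over all failure events then yields the target bound $\cals(h)^{-2}$.

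The main obstacle is the lifting step from confined coupling on $\mathcal{T}_h$ to coupling on the full graph $\tilde{G}$: agreement of the confined processes does not automatically yield $\xi^A_s \cap \mathcal{T}_h = \xi^{\tilde V}_s \cap \mathcal{T}_h$, because extra infection can enter $\mathcal{T}_h$ from $G$ or $\mathcal{L}_h$. The cleanest resolution I see is a path-surgery argument: on the high-probability event that $\xi^A$ has densely populated $\mathcal{T}_h$ by time $t_*$, every infection path certifying $v \in \xi^{\tilde V \setminus A}_t$ can have its portion in $\mathcal{T}_h$ rerouted through a path originating in $A$, yielding $v \in \xi^A_t$. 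Quantifying \emph{densely enough} precisely, and propagating all the conditional probabilities correctly through the $K$ windows, is where the bulk of the technical work will lie.
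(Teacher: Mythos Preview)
Your Phase~1 is close in spirit to what the paper does (its Claims~1--3): iterate a ``hit $\mathcal{T}_h$ or die, then couple with the tree-confined process or die'' dichotomy over $\Theta(\sqrt{\cals(h)})$ windows to conclude that with overwhelming probability either $\xi^A_{\cals(h)/2}=\varnothing$ or $\xi^A_{\cals(h)/2}\supset \xi^{\mathcal{T}_h}_{\mathcal{T}_h,\lambda;\cals(h)/2}$.

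Your Phase~2, however, has a real gap at exactly the point you flag. The claim that the discrepancy $\xi^{\tilde V\setminus A}_s\setminus\xi^A_s$ ``outside $\mathcal{T}_h$ evolves as a subcritical contact process on $G\cup\mathcal{L}_h$'' is not correct: discrepancy vertices inside $\mathcal{T}_h$ (which can certainly exist, since agreement of the \emph{confined} tree processes says nothing about $\xi^{\tilde V}_s\cap\mathcal{T}_h$ versus $\xi^A_s\cap\mathcal{T}_h$) continually feed the discrepancy outside $\mathcal{T}_h$ through the root. So the discrepancy is not a closed subcritical system, and Lemma~\ref{lem:more_than_log}/Lemma~\ref{basic_line} do not apply to it. Your proposed path-surgery fix (``reroute every path through a path originating in $A$'') amounts to asserting that every forward infection path from $\tilde V\times\{0\}$ to $(x,\cals(h))$ meets the forward cone of $A$; but ``$\xi^A$ is dense in $\mathcal{T}_h$ at time $t_*$'' does not by itself force such an intersection at any later time, and you give no mechanism that does.

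The paper resolves this cleanly by \emph{duality}: it runs the same Phase~1 argument not only forward from $A$ on $[0,\cals(h)/2]$ but also backward from each singleton $\{x\}$ on $[\cals(h)/2,\cals(h)]$, obtaining (on a high-probability event $E_1\cap E_2$) that both $\xi^A_{\tilde G,\lambda;\cals(h)/2}$ and the dual $\tilde\xi^{\{x\}}_{\tilde G,\lambda;\cals(h)/2,\cals(h)}$ contain the respective tree-confined processes at the midpoint. A third event $E_3=\{\xi^{\mathcal{T}_h}_{\mathcal{T}_h,\lambda;\cals(h)}\neq\varnothing\}$ forces those two confined sets to intersect, which is exactly the space-time meeting point your surgery needs. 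I would recommend replacing your Phase~2 by this forward/backward midpoint argument; once you have it, the union bound over $x\in\tilde V$ (using $|\tilde V|\ll \cals(h)$) gives the stated $\cals(h)^{-2}$ immediately.
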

The proof of this proposition is lengthy and technical, so we postpone it to the Appendix.

We are now interested in giving an upper bound for the probability that the infection crosses~$\mathcal{L}_h$ in a single attempt. For the proof of Proposition~\ref{prop:no_go}, it will be important that this bound is given in terms of the extinction time of the infection on~$\tilde{G}$, starting from full occupancy. 

Define
$$S(h) := \mathbb{E}\left[\inf\left\{t: \CP[\tilde{V}]{\tilde{G}}{\lambda}{t} = \varnothing \right\} \right],$$
that is,~$S(h)$ is the expected amount of time it takes for the contact process on~$\tilde{G}$ with parameter~$\lambda$ started from full occupancy to die out. Also let
\begin{equation}\label{eq:def_of_p}
\mathcal{p}(\ell) = \mathcal{p}_\lambda(\ell):= \mathbb{P}\left(\barCP[\{0\}]{\mathbb{N}_0}{\lambda}(\ell) > 0 \right),
\end{equation}	
or equivalently,~$\mathcal{p}(\ell)$ is the probability that, for the contact process with parameter~$\lambda$ on a line segment of length~$\ell + 1$, an infection starting at one extremity ever reaches the other extremity. 
\begin{lemma}\label{lem:prob_crossing}
If~$h$ is large enough,
\begin{equation}\label{eq:main_bounds2}\mathcal{p}(L(h)) \le \frac{\cals(h)^3}{S(h)}.\end{equation}
\end{lemma}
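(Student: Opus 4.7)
The plan is to argue via a ``many independent crossing attempts'' strategy: if $\mathcal{p}(L(h))$ were much larger than $\cals(h)^3/S(h)$, then, by exhibiting roughly $S(h)/L(h)$ nearly independent attempts to cross $\mathcal{L}_h$ during the lifetime of the contact process on $\hat{G}$ started from $V\cup\mathcal{T}_h$, at least one attempt would succeed with overwhelming probability. That would force $\mathcal{P}(L(h)) \ge 1-\cals(h)^{-1}$, contradicting the definition of $L(h)$.

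The first step is a density estimate: with probability at least $1 - \cals(h)^{-2}$, the total time $T'_\rho$ during which $\rho$ is infected in the \emph{confined} contact process on the subgraph of $\tilde{G}$ induced by $V \cup \mathcal{T}_h$ (with initial condition $V \cup \mathcal{T}_h$) satisfies $T'_\rho \ge c_1 S(h)$, for some $c_1 > 0$. I would establish this using $V\cup\mathcal{T}_h\in\mathcal{A}_h$, \autoref{lem:persist0}, \autoref{prop:persist_couple}, and a quasi-stationarity/ergodic argument on $\mathcal{T}_h$ based on \autoref{basic_trees}, exploiting that $\rho$ has positive asymptotic occupation density in the metastable regime. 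Crucially, $T'_\rho$ is measurable with respect to the graphical data inside $V \cup \mathcal{T}_h$ only and, by monotonicity, each moment counted in $T'_\rho$ is also a moment at which $\rho$ is infected in the actual process on $\hat{G}$ from $V \cup \mathcal{T}_h$.

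The second step is to partition $[0, c_1 S(h)]$ into blocks $B_1, \ldots, B_N$ of length $T^\star$ comparable to $L(h)$ and to declare a \emph{crossing attempt} in block $B_i$ to be successful if there exists $t \in B_i$ with $\rho$ infected at time $t$ in the confined process, a $\rho \to v_0$ transmission arrow at time $t$, and an infection path in the graphical data on $\mathcal{L}_h\times B_i$ from $(v_0, t)$ to $\{v_{L(h)}\} \times B_i$. Because distinct blocks use disjoint portions of the graphical data on $\mathcal{L}_h$ and on the $\rho$--$v_0$ edge, the events ``attempt in $B_i$ is successful'' are conditionally independent given the confined process; moreover each of them has conditional success probability at least $c_2 \mathcal{p}(L(h))$ for some $c_2 > 0$, using that the bulk of the crossing probability $\mathcal{p}(L(h))$ is realised within time $O(L(h))$ by subcritical exponential-decay estimates akin to \autoref{lem:exp_decay}. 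A successful attempt forces $v_{L(h)}$ to be infected in the actual process on $\hat{G}$ from $V\cup\mathcal{T}_h$.

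Combining, on the event $\{T'_\rho\ge c_1 S(h)\}$ there are at least $N\ge c_1 S(h)/T^\star$ conditionally independent attempts, so
\[
1-\mathcal{P}(L(h)) \;\le\; \cals(h)^{-2} + \bigl(1-c_2\mathcal{p}(L(h))\bigr)^{c_1 S(h)/T^\star} \;\le\; \cals(h)^{-2} + \exp\bigl\{-c_3\mathcal{p}(L(h))S(h)/L(h)\bigr\}.
\]
Since $1-\mathcal{P}(L(h)) > \cals(h)^{-1} \gg \cals(h)^{-2}$ by the definition of $L(h)$, the exponential term must dominate, which yields $\mathcal{p}(L(h)) \le c_3^{-1} L(h)\log(2\cals(h))/S(h) \le \cals(h)^3/S(h)$ for $h$ large, using $L(h)\le d^{2h}$ (\autoref{prop:bounds}) and $\log\cals(h) = d^{\sqrt{h}}$, so that $L(h)\log\cals(h) \le d^{2h+\sqrt{h}} \ll \exp\{3 d^{\sqrt{h}}\} = \cals(h)^3$. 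The main obstacle is the density estimate $T'_\rho \ge c_1 S(h)$ with the stated probability, which demands a quantitative handle on the metastable behaviour of the contact process on $\mathcal{T}_h$ and on the positivity of the root's occupation density in its quasi-stationary distribution.
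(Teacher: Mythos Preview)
Your overall strategy---show that if $\mathcal{p}(L(h))$ were too large, then many nearly-independent crossing attempts during the lifetime of the process from $V\cup\mathcal{T}_h$ would force $\mathcal{P}(L(h)) \ge 1-\cals(h)^{-1}$, contradicting the definition of $L(h)$---is exactly the paper's strategy. The difference, and the gap in your proposal, is the density estimate $T'_\rho \ge c_1 S(h)$: this is genuinely hard with the tools at hand. It requires quantitative control on the quasi-stationary occupation density of the single vertex $\rho$, and also a comparison between the extinction time of the confined process on $V\cup\mathcal{T}_h$ and the quantity $S(h)$, which is defined on the strictly larger graph $\tilde{G}$. Neither ingredient is available in the paper, and you correctly flag this as ``the main obstacle''; the paper simply never proves it.

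The paper sidesteps the density estimate with a much cruder per-block bound that only needs the process to be \emph{nonempty somewhere}. Namely: whenever the process on $\tilde{G}$ is nonempty, it reaches $\tilde{o}$ within the next $h+\cals(h)$ time units with probability at least $\delta(h)\cdot q(h)$, where $\delta(h)=(e^{-1}(1-e^{-\lambda}))^h$ is the cost of walking from an arbitrary occupied vertex of $V\cup\mathcal{T}_h$ to $v_0$ (using that the diameter of $V\cup\mathcal{T}_h$ is at most $h$ for $h$ large), and $q(h)\le\mathcal{p}(L(h))$ is the probability of a crossing of $\mathcal{L}_h$ within time $\cals(h)$. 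Iterating over blocks of length $h+\cals(h)$ up to time $S'(h):=S(h)/(4\cals(h))$, and using \autoref{lem:inv_markov}, \autoref{lem:persist0} and \autoref{prop:persist_couple} to show that the process from $V\cup\mathcal{T}_h$ is still alive at time $S'(h)$ with probability at least $1-(2\cals(h))^{-1}$, one gets
\[
\cals(h)^{-1} < (2\cals(h))^{-1} + \exp\Bigl\{-\delta(h)\,q(h)\,\bigl\lfloor S'(h)/(h+\cals(h))\bigr\rfloor\Bigr\},
\]
and solves for $q(h)$. The extra factors $\delta(h)^{-1}$, $h+\cals(h)$ and $4\cals(h)$ are all tiny compared to $\cals(h)^3$, so $\mathcal{p}(L(h))\le\cals(h)^3/S(h)$ follows. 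In short, the paper trades your sharp occupation-time statement for a weak reachability statement, at a cost that is easily absorbed by the generous $\cals(h)^3$ in the target bound.
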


\begin{proof}
Recall that~$v_0$ is the vertex of~$\mathcal{L}_h$ neighboring~$\rho$, the root of~$\mathcal{T}_h$. Let~$q(h)$ denote the probability that there is an infection path starting from~$(v_0,0)$, ending at~$(\tilde{o},t)$ for some~$t \le \cals(h)$, and entirely contained in~$\mathcal{L}_h$. Note that~$q(h) \le \mathcal{p}(L(h))$ and, by a union bound,
$$\mathcal{p}(L(h)) \leq q(h) + \mathbb{P}\left(\CP[\{v_0\}]{\mathcal{L}_h}{\lambda}{\cals(h)} \neq \varnothing \right) \stackrel{\eqref{eq:exp_decay_time}}{\le} q(h) + e^{-c_{\mathbb{Z}}\cdot \cals(h)}.$$

Next, assume that~$h$ is large enough that any vertex in~$V$ is at distance smaller than~$h$ from~$\rho$, the root of~$\mathcal{T}_h$. With this choice, we claim that for any~$A \subset \tilde{V}$,~$A \neq \varnothing$ we have
\begin{equation}\mathbb{P}\left( \CP[A]{\tilde{G}}{\lambda}{t}(\tilde{o})=1 \text{ for some } t \leq h + \cals(h)  \right) > (e^{-1}(1-e^{-{\lambda}}))^h\cdot q(h).\label{eq:aux_A_infects}\end{equation} Indeed, if~$A \cap \mathcal{L}_h \neq \varnothing$ then the left-hand side is larger than~$q(h)$ by the definition of~$q(h)$ and simple monotonicity considerations. If~$A \cap \mathcal{L}_h = \varnothing$, then by~\eqref{eq:lower_bound_reach}, with probability larger than~$\delta(h):= (e^{-1}(1-e^{-{\lambda}}))^h$,~$\rho$ gets infected within time~$h$, and conditioned on this, with probability~$q(h)$,~$\tilde{o}$ gets infected after at most additional~$\cals(h)$ units of time. Applying~\eqref{eq:aux_A_infects} and the strong Markov property repeatedly, we have
\begin{equation}
\mathbb{P}\left(\begin{array}{l}\CP[A]{\tilde{G}}{\lambda}{t} \neq \varnothing,\\[.2cm] \CP[A]{\tilde{G}}{\lambda}{r}(\tilde{o}) = 0\; \forall r \leq t \end{array}\right) \leq \left(1 - \delta(h)\cdot q(h)\right)^{\left \lfloor \frac{t}{h+\cals(h)}\right\rfloor},\; t \ge h+\cals(h).
\end{equation}

Now, letting~$S'(h):= \frac{S(h)}{4 \cals(h)}$, we have
\begin{align}\nonumber (\cals(h))^{-1} &< \mathbb{P}\left(\bar{\xi}^{V \cup \mathcal{T}_h}_{\tilde{G},\lambda}(\tilde{o}) = 0 \right) \\\nonumber&\le \mathbb{P}\left(\CP[V \cup \mathcal{T}_h]{\tilde{G}}{\lambda}{S'(h)}= \varnothing \right) + \left(1-\delta(h)\cdot q(h) \right)^{\left \lfloor \frac{S'(h)}{h+\cals(h)} \right\rfloor}\\
&\le \mathbb{P}\left( \CP[V \cup \mathcal{T}_h]{\tilde{G}}{\lambda}{S'(h)}= \varnothing \right)  + \exp\left\{-\delta(h)\cdot q(h) \cdot \left\lfloor \frac{S'(h)}{h+\cals(h)}\right \rfloor \right\}. \label{eq:want_rel}
\end{align}
where the first inequality follows from the definition of~$L(h)$, see~\eqref{eq:def_of_p_cross} and~\eqref{eq:def_of_L_cross}. We now claim that
\begin{equation}\label{eq:rel_t2}
\mathbb{P}\left( \CP[V \cup \mathcal{T}_h]{\tilde{G}}{\lambda}{S'(h)}= \varnothing \right)  < (2\cals(h))^{-1}
\end{equation}
if~$h$ is large enough. Plugging this into~\eqref{eq:want_rel}, we obtain
$$q(h) < \frac{\log(2\cals(h))\cdot (h+\cals(h))}{\delta(h)\cdot S'(h)} < \frac{4 \log(2\cals(h)) \cdot (h+\cals(h))\cdot \cals(h)}{(e^{-1}(1-e^{-\lambda}))^h \cdot S(h)} < \frac{\cals(h)^3}{S(h)}$$
for large enough~$h$, completing the proof.

It remains to prove~\eqref{eq:rel_t2}. Noting  that~$S'(h) \gg \cals(h)$ if~$h$ is large, we have
\begin{align*}&\mathbb{P}\left(\CP[V \cup \mathcal{T}_h]{\tilde{G}}{\lambda}{S'(h)}= \varnothing\right) \le \mathbb{P}\left(\CP[\tilde{V}]{\tilde{G}}{\lambda}{S'(h)}= \varnothing\right) + \mathbb{P}\left(\CP[V \cup \mathcal{T}_h]{\tilde{G}}{\lambda}{\cals(h)} \neq \CP[\tilde{V}]{\tilde{G}}{\lambda}{\cals(h)}\right).
\end{align*}
By~\autoref{lem:inv_markov}, we have
$$ \mathbb{P}\left(\CP[\tilde{V}]{\tilde{G}}{\lambda}{ S'(h)}= \varnothing\right) \leq \frac{S'(h)}{S(h)} = (4\cals(h))^{-1}.$$
Next,
\begin{align*} &\mathbb{P}\left(\CP[V \cup \mathcal{T}_h]{\tilde{G}}{\lambda}{\cals(h)} \neq \CP[\tilde{V}]{\tilde{G}}{\lambda}{\cals(h)}\right)\\
&\leq \mathbb{P}\left(\CP[V \cup \mathcal{T}_h]{\tilde{G}}{\lambda}{\cals(h)} = \varnothing \right) + \mathbb{P}\left(\CP[V \cup \mathcal{T}_h]{\tilde{G}}{\lambda}{\cals(h)} \neq \varnothing,\;\CP[V \cup \mathcal{T}_h]{\tilde{G}}{\lambda}{\cals(h)}  \neq \CP[\tilde{V}]{\tilde{G}}{\lambda}{\cals(h)}\right).
\end{align*}
Now, the first term on the right-hand side is smaller than~$\cals(h)^{-2}$ by Lemma~\ref{lem:persist0} (since~$V \cup \mathcal{T}_h \in \mathcal{A}_h$), and the second term on the right-hand side is also smaller than~$\cals(h)^{-2}$ by Proposition~\ref{prop:persist_couple}.
Putting things together gives~\eqref{eq:rel_t2} for large enough~$h$.
\end{proof}

We end this section with a lower bound on~$L(h)$, which again will be important for the proof of Proposition~\ref{prop:no_go}.
\begin{lemma}
	If~$h$ is large enough,
\begin{equation}\label{eq:lower_bound_L}
L(h) \ge d^{\frac{3h}{4}}.
\end{equation}
	\end{lemma}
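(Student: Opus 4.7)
The plan is to show, for $\ell := \lceil d^{3h/4}\rceil$, that $\mathcal{P}(\ell) \ge 1 - \cals(h)^{-1}$; by~\eqref{eq:def_of_L_cross} and the monotonicity of $\mathcal{P}$ this immediately yields $L(h) > \ell \ge d^{3h/4}$. Set $T := \exp\{c_\mathbb{T}\cdot d^h\}$, $\tau := \ell + h + 2$ and $N := \lfloor T/\tau\rfloor$, and define the ``still failing'' events
\begin{equation*}
R_i := \bigl\{\CP[V \cup \mathcal{T}_h]{\hat G}{\lambda}{i\tau}\neq\varnothing\bigr\} \cap \bigl\{v_\ell\notin\CP[V \cup \mathcal{T}_h]{\hat G}{\lambda}{s}\text{ for all }s\le i\tau\bigr\}.
\end{equation*}
Since $\varnothing$ is absorbing, survival past $T \ge N\tau$ implies survival at $N\tau$, so
\begin{equation*}
\bigl\{v_\ell\text{ never infected}\bigr\}\subseteq \bigl\{\CP[V\cup \mathcal{T}_h]{\hat G}{\lambda}{T}=\varnothing\bigr\}\cup R_N.
\end{equation*}
The first event has probability at most $\exp\{-c_\mathbb{T} d^h\}$ by monotonic domination of $\CP[\mathcal{T}_h]{\mathcal{T}_h}{\lambda}{t}$ inside $\CP[V\cup\mathcal{T}_h]{\hat G}{\lambda}{t}$ combined with~\eqref{eq:metastability}, which is much smaller than $\cals(h)^{-1}$ for $h$ large.

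To control $\mathbb{P}(R_N)$, the key observation is a deterministic path-decomposition in $\hat G$: each $v_k\in\mathcal{L}_\infty$ is a cut vertex separating $\{v_{k+1},v_{k+2},\ldots\}$ from $V\cup\mathcal{T}_h\cup\{v_0,\ldots,v_{k-1}\}$, so every $\lambda$-infection path starting in $V\cup\mathcal{T}_h$ and ending at $v_{k+1}$ must visit $v_k$. Hence the first-infection times $\tau_k$ of the $v_k$ satisfy $\tau_0\le\tau_1\le\tau_2\le\cdots$, and on $R_i$ the configuration $\CP[V\cup\mathcal{T}_h]{\hat G}{\lambda}{i\tau}$ is a non-empty subset of $V\cup\mathcal{T}_h\cup\{v_0,\ldots,v_{\ell-1}\}$, every vertex of which lies within graph distance $\ell+h+2$ of $v_\ell$ (assuming $h\ge\mathrm{diam}(G)$, as we may). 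Applying~\eqref{eq:lower_bound_reach} with the strong Markov property at time $i\tau$ to any such vertex then yields
\begin{equation*}
\mathbb{P}(R_{i+1}\mid R_i)\le 1-\delta,\qquad \delta:=\bigl(e^{-2}(1-e^{-\lambda})\bigr)^{\ell+h+2}=(m/d)^{\ell+h+2},
\end{equation*}
and iterating gives $\mathbb{P}(R_N)\le\exp\{-\delta N\}$.

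Finally, I would verify that $\delta N\gg d^{\sqrt h}$. Using $\log(1/\delta)=(\ell+h+2)\log(d/m)=O(d^{3h/4})$ and $\log N = c_\mathbb{T} d^h-O(\log\tau)=(1-o(1))c_\mathbb{T} d^h$, the $c_\mathbb{T} d^h$ term dominates (since $d^h/d^{3h/4}=d^{h/4}\to\infty$), so $\delta N\ge\exp\{c_\mathbb{T} d^h/2\}$ for $h$ large, and thus $\exp\{-\delta N\}\ll\exp\{-d^{\sqrt h}\}=\cals(h)^{-1}$. Combined with the extinction bound this produces $\mathbb{P}(v_\ell\text{ never infected})<\cals(h)^{-1}$, as required. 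The main obstacle is the first-infection-time monotonicity along $\mathcal{L}_\infty$, a structural/topological fact about $\hat G$ that needs careful verification since infection paths are not monotone in space; once it is in hand the remainder is a routine restart/Chernoff-style calculation entirely in the spirit of the argument leading to~\eqref{eq:aux_A_infects}.
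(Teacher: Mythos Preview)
Your argument is correct. The cut-vertex observation you flag as the ``main obstacle'' is in fact immediate: since $\hat G$ is a tree and an infection path jumps only along edges, any path from $V\cup\mathcal{T}_h$ to $v_m$ with $m>\ell$ must visit $v_\ell$, so $\tau_\ell\le\tau_m$. The remaining restart bound and arithmetic are fine (note $\log(d/m)$ is a constant in $h$, so $\log(1/\delta)=O(d^{3h/4})$ as you state).

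Your route differs substantially from the paper's. The paper deduces the lower bound on $L(h)$ from Lemma~\ref{lem:prob_crossing}, which gives $\mathcal{p}_\lambda(L(h))\le \cals(h)^3/S(h)$; combining this with the trivial lower bound $(e^{-1}(1-e^{-\lambda}))^{L(h)}\le \mathcal{p}_\lambda(L(h))$ and $S(h)\ge \exp\{c_\mathbb{T} d^h\}$ yields $L(h)\ge d^{3h/4}$. That lemma, however, invokes the coupling Proposition~\ref{prop:persist_couple}, whose proof is deferred to the Appendix. Your argument bypasses both Lemma~\ref{lem:prob_crossing} and Proposition~\ref{prop:persist_couple} entirely, using only~\eqref{eq:metastability} and~\eqref{eq:lower_bound_reach} in a direct restart scheme on $\hat G$. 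This is more elementary for the present lemma. The paper's detour is not wasted, though: the inequality $\mathcal{p}_\lambda(L(h))\le \cals(h)^3/S(h)$ is reused essentially in the proof of Proposition~\ref{prop:no_go}, so establishing it once serves two purposes.
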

	
\begin{proof} 
By the simple estimate~\eqref{eq:lower_bound_reach} and Lemma \ref{lem:prob_crossing}, we have
$$(e^{-1}(1-e^{-\lambda}))^{L(h)} \le \mathcal{p}_{{\lambda}}(L(h)) \le \frac{\cals(h)^3}{S(h)}.$$
This gives 
$$ L(h) \ge \frac{1}{\log(e(1-e^{-\lambda})^{-1})} \cdot \log\left(\frac{S(h)}{\cals(h)^3} \right).$$
Recalling that~$\cals(h) = \exp\{d^{\sqrt{h}}\}$ and noting that
$$S(h) \ge \mathbb{E}\left[\inf\{t: \CP{\mathcal{T}_h}{\lambda}{t} = \varnothing \} \right] \ge \exp\{c_\mathbb{T}\cdot  d^h \},$$
we obtain
$$L(h) \ge \frac{c_\mathbb{T} \cdot d^h - 3d^{\sqrt{h}}}{\log(e(1-e^{-\lambda})^{-1})}  > d^{\frac{3h}{4}}$$
if~$h$ is large enough.
\end{proof}

\subsection{Proof of Proposition~\ref{prop:time_spent}}
\label{ss:time_spent}

We begin with a simple consequence of Proposition~\ref{prop:persist_couple}.
\begin{lemma} If~$h$ is large enough, for any~$A \in \mathcal{A}_h$ we have
	\label{lem:time_spent0}
$$\mathbb{P}\left(\CP[A]{\tilde{G}}{\lambda}{\cals(h)}= \CP[V \cup \mathcal{T}_h]{\tilde{G}}{\lambda}{\cals(h)} \right) > 1-4\cals(h)^{-2}.$$
	\end{lemma}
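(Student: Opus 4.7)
The plan is to reduce the coupling claim about $\CP[A]{\tilde{G}}{\lambda}{\cals(h)}$ and $\CP[V \cup \mathcal{T}_h]{\tilde{G}}{\lambda}{\cals(h)}$ to the already-proved coupling results Proposition~\ref{prop:persist_couple} and Lemma~\ref{lem:persist0}, by using the ``full-occupancy'' configuration $\tilde{V}$ as an intermediate. By monotonicity, any initial condition $A \subset \tilde{V}$ satisfies $\CP[A]{\tilde{G}}{\lambda}{t} \subset \CP[\tilde{V}]{\tilde{G}}{\lambda}{t}$ for all $t$, so the strategy is simply to show that both $\CP[A]{\tilde{G}}{\lambda}{\cals(h)}$ and $\CP[V \cup \mathcal{T}_h]{\tilde{G}}{\lambda}{\cals(h)}$ agree with $\CP[\tilde{V}]{\tilde{G}}{\lambda}{\cals(h)}$ with very high probability.

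For the $A$-initialized process, Proposition~\ref{prop:persist_couple} gives $\mathbb{P}\bigl(\CP[A]{\tilde{G}}{\lambda}{\cals(h)}\neq\varnothing,\ \CP[A]{\tilde{G}}{\lambda}{\cals(h)}\neq \CP[\tilde{V}]{\tilde{G}}{\lambda}{\cals(h)}\bigr) < \cals(h)^{-2}$, while the hypothesis $A\in\mathcal{A}_h$ combined with Lemma~\ref{lem:persist0} gives $\mathbb{P}\bigl(\CP[A]{\tilde{G}}{\lambda}{\cals(h)} = \varnothing\bigr) < \cals(h)^{-2}$. A union bound therefore yields
\[
\mathbb{P}\bigl(\CP[A]{\tilde{G}}{\lambda}{\cals(h)} = \CP[\tilde{V}]{\tilde{G}}{\lambda}{\cals(h)}\bigr) > 1 - 2\cals(h)^{-2}.
\]

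Now observe that $V\cup\mathcal{T}_h \in \mathcal{A}_h$ as well: it contains every vertex of $\mathcal{T}_h$ at height $\lfloor h/2\rfloor$, and there are $d^{\lfloor h/2\rfloor} \ge (m/2)^{\lfloor h/2\rfloor}$ of those (since $m \le d$). Applying Proposition~\ref{prop:persist_couple} and Lemma~\ref{lem:persist0} to $V\cup\mathcal{T}_h$ in exactly the same way gives $\mathbb{P}\bigl(\CP[V\cup\mathcal{T}_h]{\tilde{G}}{\lambda}{\cals(h)} = \CP[\tilde{V}]{\tilde{G}}{\lambda}{\cals(h)}\bigr) > 1 - 2\cals(h)^{-2}$. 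A final union bound combines both equalities into $\CP[A]{\tilde{G}}{\lambda}{\cals(h)} = \CP[V\cup\mathcal{T}_h]{\tilde{G}}{\lambda}{\cals(h)}$ with probability at least $1 - 4\cals(h)^{-2}$, which is exactly the claim.

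There is no real obstacle here; the whole argument is a bookkeeping exercise provided the two ingredients (persistence on $\mathcal{A}_h$ and the coupling-with-full-occupancy estimate) are in hand, and the only small verification needed is that $V\cup\mathcal{T}_h\in\mathcal{A}_h$, which is immediate from the construction of $\mathcal{A}_h$.
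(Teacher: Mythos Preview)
Your proof is correct and essentially identical to the paper's: both apply Lemma~\ref{lem:persist0} and Proposition~\ref{prop:persist_couple} to each of $A$ and $V\cup\mathcal{T}_h$ (using that both lie in $\mathcal{A}_h$) and combine the resulting four error terms by a union bound. The only difference is that you spell out the intermediate step $\CP[A]{\tilde{G}}{\lambda}{\cals(h)} = \CP[\tilde{V}]{\tilde{G}}{\lambda}{\cals(h)}$ and the verification $V\cup\mathcal{T}_h\in\mathcal{A}_h$ a bit more explicitly than the paper does.
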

\begin{proof}
Since both~$A$ and~$V \cup \mathcal{T}_h$ belong to~$\mathcal{A}_h$, Lemma~\ref{lem:persist0} gives
$$\mathbb{P}\left(\CP[A]{\tilde{G}}{\lambda}{\cals(h)} = \varnothing \right) < \frac{1}{\cals(h)^2},\quad \mathbb{P}\left(\CP[V \cup \mathcal{T}_h]{\tilde{G}}{\lambda}{\cals(h)} = \varnothing \right) < \frac{1}{\cals(h)^2}, $$
and Proposition~\ref{prop:persist_couple} gives
\begin{align*}&\mathbb{P}\left(\CP[A]{\tilde{G}}{\lambda}{\cals(h)} \neq \varnothing,\;  \CP[A]{\tilde{G}}{\lambda}{\cals(h)} \neq \CP[\tilde{V}]{\tilde{G}}{\lambda}{\cals(h)}\right) < \cals(h)^{-2},\\[.2cm]&\mathbb{P}\left(\CP[V \cup \mathcal{T}_h]{\tilde{G}}{\lambda}{\cals(h)} \neq \varnothing,\;  \CP[V \cup \mathcal{T}_h]{\tilde{G}}{\lambda}{\cals(h)} \neq \CP[\tilde{V}]{\tilde{G}}{\lambda}{\cals(h)}\right) < \cals(h)^{-2}. \end{align*}
The desired statement follows from these four inequalities.
	\end{proof}

\begin{lemma}\label{lem:time_spent_baby}
\label{lem:time_spent_baby}
If~$h$ is large enough we have, for any~$v \in V$,
\begin{equation*}
\mathbb{P}\left(\int_{\cals(h)}^\infty \CP[V \cup \mathcal{T}_h]{\tilde{G}}{\lambda}{t}(v)\;\mathrm{d}t > h \right) > 1- \frac{1}{2h}.
\end{equation*}
	\end{lemma}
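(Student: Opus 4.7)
My plan is to exploit the long survival window of the contact process within $\mathcal{T}_h$ to manufacture many near-independent opportunities for $v$ to become infected for at least one unit of time, and then conclude by a Chernoff bound.

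First, by monotonicity $\xi^{V \cup \mathcal{T}_h}_{\tilde{G},\lambda;t} \cap \mathcal{T}_h \supseteq \xi^{\mathcal{T}_h}_{\mathcal{T}_h,\lambda;t}$ in the common graphical construction, together with \eqref{eq:metastability} of Proposition~\ref{basic_trees}, with probability at least $1 - \exp\{-c_\mathbb{T} d^h\}$ the process keeps at least one vertex of $\mathcal{T}_h$ infected throughout $[0,T^*]$, where $T^* := \exp\{c_\mathbb{T} d^h\}$. Let $D := \mathrm{dist}_{\tilde{G}}(\rho,v)$, which is uniformly bounded in $h$ since $|V|$ is fixed. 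Set $\Delta := \mathcal{t}(h) + D + 1$, and partition $[\cals(h),T^*/2]$ into disjoint windows $W_k = [s_k,s_{k+1}]$ of length $\Delta$ with $s_k := \cals(h) + k\Delta$; the number of such windows satisfies $K \geq \exp\{c_\mathbb{T} d^h/3\} \gg h$ for $h$ large. Let $G_k$ be the event that $v$ is infected for a total time at least $1$ inside $W_k$. The central claim is that there exists $c_0 = c_0(\lambda,v,G) > 0$, independent of $h$ and $k$, such that
\[
\mathbb{P}(G_k \mid \mathcal{F}_{s_k}) \geq c_0 \cdot \mathbf{1}\{\xi^{V \cup \mathcal{T}_h}_{\tilde{G},\lambda;s_k} \cap \mathcal{T}_h \neq \varnothing\}.
\]

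To establish the claim, condition on $\mathcal{F}_{s_k}$ and let $A = \xi_{s_k} \cap \mathcal{T}_h$ be non-empty. Applying \eqref{eq:surv_subset} of Proposition~\ref{basic_trees} to the Markov-restarted process on $\mathcal{T}_h$ from $A$: with probability at least $c_\mathbb{T}$, at time $s_k + \mathcal{t}(h)$ the process $\xi^A_{\mathcal{T}_h,\lambda}$ couples with the background trajectory $\xi^{\mathcal{T}_h}_{\mathcal{T}_h,\lambda}$. By self-duality of the contact process on the finite tree $\mathcal{T}_h$ one has $\mathbb{P}(\rho \in \xi^{\mathcal{T}_h}_{\mathcal{T}_h,\lambda;t}) = \mathbb{P}(\xi^{\{\rho\}}_{\mathcal{T}_h,\lambda;t} \neq \varnothing) \geq c_\mathbb{T}$ for $t \leq T^*$, the latter by \eqref{eq:surv_subset} with initial set $\{\rho\}$. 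Combined, these two events yield a constant lower bound $c_1 > 0$ on the probability that $\rho$ belongs to $\xi^A_{\mathcal{T}_h; \mathcal{t}(h)}$, and hence by monotonicity to $\xi^A_{\tilde{G}; \mathcal{t}(h)}$. Finally, \eqref{eq:lower_bound_reach} applied in the subsequent interval $[s_k + \mathcal{t}(h), s_k + \mathcal{t}(h) + D]$ gives an infection path of length $D$ from $\rho$ to $v$ with probability at least $(e^{-1}(1-e^{-\lambda}))^D$, and a further factor $e^{-1}$ ensures $v$ has no recovery mark during the following unit interval, producing the bound $c_0 := c_1 \cdot (e^{-1}(1-e^{-\lambda}))^D \cdot e^{-1}$.

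Granting the claim, on the survival event the indicators $\mathbf{1}_{G_k}$ stochastically dominate an i.i.d.\ Bernoulli$(c_0)$ sequence via iterated conditioning at $s_1, s_2, \ldots$, so a Chernoff bound yields that at least $K c_0 / 2 \gg h$ of the events $G_k$ occur with probability at least $1 - \exp\{-c_0 K/8\}$. Each successful $G_k$ contributes at least $1$ to $\int_{\cals(h)}^\infty \xi^{V \cup \mathcal{T}_h}_{\tilde{G},\lambda;t}(v)\,\mathrm{d}t$. Combining with the survival bound yields the lemma with probability at least $1 - \exp\{-c_\mathbb{T} d^h\} - \exp\{-c_0 K/8\} > 1 - 1/(2h)$ for $h$ large.

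The principal technical obstacle is justifying that the two probability-$c_\mathbb{T}$ events used in Step~3 (the coupling event from \eqref{eq:surv_subset} and the self-duality lower bound on $\rho$-infection in the background process) combine into a single event with joint probability bounded below by a positive constant: if $c_\mathbb{T} < 1/2$, a union bound does not suffice, and one must invoke either an FKG-type correlation inequality exploiting the attractive nature of the contact process, or a mild sharpening of \eqref{eq:surv_subset} that explicitly tracks the status of the root $\rho$ in the coupled configuration.
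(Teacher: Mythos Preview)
Your overall plan (many near-independent windows, one-sided domination by Bernoulli trials, Chernoff bound) is in the same spirit as the paper's argument, but you have introduced an unnecessary complication that leaves a genuine gap.

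The paper does not try to obtain a success probability $c_0$ that is \emph{constant} in $h$. Instead, it takes windows of length $4h$ in the interval $[\cals(h),2\cals(h)]$ and, whenever $\xi^{V\cup\mathcal{T}_h}_{\tilde G,\lambda;t}\cap\mathcal{T}_h\neq\varnothing$, sends the infection \emph{directly} from any occupied vertex of $\mathcal{T}_h$ along a geodesic to $v$ (length at most $2h$ once $h$ exceeds the diameter of $G$), and then keeps $v$ infected for a further $2h$ units. This one-shot success probability is only $e^{-2h}(e^{-1}(1-e^{-\lambda}))^{2h}$, but a \emph{single} success already yields $\int \xi_t(v)\,\mathrm{d}t>h$, and with $\lfloor \cals(h)/(4h)\rfloor$ independent attempts the failure probability is $(1-c^h)^{\cals(h)/(4h)}\ll 1/(4h)$.

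Your route, by contrast, requires lower-bounding $\mathbb{P}(\rho\in\xi^A_{\mathcal{T}_h,\lambda;\mathcal{t}(h)})$ uniformly over non-empty $A$ by a constant. You correctly identify that combining the coupling event of \eqref{eq:surv_subset} with the event $\{\rho\in\xi^{\mathcal{T}_h}_{\mathcal{T}_h,\lambda;\mathcal{t}(h)}\}$ is the obstacle, and indeed it is not resolved: the coupling event $\{\xi^A_{\mathcal{t}(h)}=\xi^{\mathcal{T}_h}_{\mathcal{t}(h)}\}$ is \emph{not} monotone in the graphical construction (adding an arrow may enlarge $\xi^{\mathcal{T}_h}_{\mathcal{t}(h)}$ at a vertex unreachable from $A$), so FKG does not apply, and a union bound fails for $c_\mathbb{T}<1/2$. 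The ``mild sharpening'' you allude to is not available from the stated results. The simplest repair is exactly the paper's: abandon the constant $c_0$, use direct paths of length $\le 2h$ from an occupied tree vertex to $v$, shorten the windows to length $4h$, and note that one success suffices.
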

\begin{proof}
	Assume~$h$ is larger than the graph diameter of~$G$, and fix~$v \in V$. We have, for any~$A \subset \tilde{V}$ with~$A \cap \mathcal{T}_h \neq \varnothing$,
	$$\mathbb{P}\left(\int_0^{4h}\CP[A]{\tilde{G}}{\lambda}{t}(v)\;\mathrm{d}t > h\right) \ge e^{-2h}\cdot (e^{-1}\cdot (1-e^{-\lambda}))^{2h}.$$	
	Indeed, by the estimate~\eqref{eq:lower_bound_reach} we have that, with probability at least~$(e^{-1}\cdot (1-e^{-\lambda}))^{2h}$,~$v$ becomes infected  before time~$2h$, and then it remains infected for time~$2h$ (by having no recovery marks) with probability~$e^{-2h}$. By iterating this, we obtain
	\begin{align*}&\mathbb{P}\left(\CP[\mathcal{T}_h]{\mathcal{T}_h}{\lambda}{2\cals(h)} \neq \varnothing,\; \int_{\cals(h)}^{2\cals(h)}\CP[V \cup \mathcal{T}_h]{\tilde{G}}{\lambda}{t}(v) \;\mathrm{d}t \le h\right) \\&\hspace{4cm} < \left( 1- e^{-2h}\cdot (e^{-1}\cdot (1-e^{-\lambda}))^{2h} \right)^{\lfloor \cals(h)/(4h)\rfloor} \ll \frac{1}{4h}.\end{align*}
We therefore have
	\begin{align*}
	&\mathbb{P}\left(\int_{\cals(h)}^{2\cals(h)}\CP[V \cup \mathcal{T}_h]{\tilde{G}}{\lambda}{t}(v)\;\mathrm{d}t \le h \right)\\
	 &\le \mathbb{P}\left(\CP[\mathcal{T}_h]{\mathcal{T}_h}{\lambda}{2\cals(h)} = \varnothing \right) + \mathbb{P}\left(\CP[\mathcal{T}_h]{\mathcal{T}_h}{\lambda}{2\cals(h)} \neq \varnothing,\;\int_{\cals(h)}^{2\cals(h)}\CP[V \cup \mathcal{T}_h]{\tilde{G}}{\lambda}{t}(v)\;\mathrm{d}t \le h \right) \\[.2cm]
	&\quad\stackrel{\eqref{eq:ud_markov},\eqref{eq:metastability}}\le \frac{2\cals(h)}{\exp\{c_\mathbb{T}\cdot d^h\}} + \left( 1- e^{-2h}\cdot (e^{-1}\cdot (1-e^{-\lambda}))^{2h} \right)^{\lfloor \cals(h)/(4h)\rfloor} \ll \frac{1}{2h}
	\end{align*}
	if~$h$ is large, which implies the statement of the lemma.
\end{proof}

\begin{lemma} \label{lem:time_spent} If~$h$ is large enough we have
\begin{equation}
\mathbb{P}\left( \int_{\cals(h)}^\infty\CP[V \cup \mathcal{T}_h]{\tilde{G}}{\lambda}{t}(\tilde{o})\;\mathrm{d}t > h \right) > 1 - \frac{1}{2h}.
\end{equation}
\end{lemma}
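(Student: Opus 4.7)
My plan is to mirror the structure of Lemma~\ref{lem:time_spent_baby}, but to replace the per-window geodesic-crossing estimate by an argument that exploits the defining property of~$L(h)$. The proof of Lemma~\ref{lem:time_spent_baby} partitioned $[\cals(h),2\cals(h)]$ into windows of length~$4h$ and obtained a per-window probability of order $e^{-ch}$ via a direct geodesic estimate, valid because every~$v\in V$ lies within distance~$h$ of the root~$\rho$. For~$\tilde o = v_{L(h)}$ this strategy is hopeless: by Lemma~\ref{prop:bounds} and the partner lower bound~\eqref{eq:lower_bound_L}, the distance from~$\rho$ to~$\tilde o$ is at least~$d^{3h/4}$, so any single-window crossing has probability exponentially small in~$L(h)$, and even summing over the $\cals(h)$ available units of time this is negligible.

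The crucial new input is the defining inequality~$\mathcal{P}(L(h)-1)\ge 1-\cals(h)^{-1}$: over the lifetime of the process started from~$V\cup\mathcal T_h$, the vertex~$v_{L(h)-1}$ is infected at some time with overwhelming probability. My plan is to first establish a polynomial-in-$h$ lower bound on the total time~$v_{L(h)-1}$ is infected in~$[\cals(h),\infty)$, specifically
\[
\int_{\cals(h)}^\infty \CP[V\cup\mathcal T_h]{\tilde G}{\lambda}{t}(v_{L(h)-1})\;\mathrm{d}t \;\ge\; h^2
\]
with probability~$\ge 1-1/(3h)$. The heuristic reason is that, if one models infection arrivals at~$v_{L(h)-1}$ as approximately renewal-like events driven by the metastable refreshes of~$\mathcal T_h$, then for~$\mathcal{P}(L(h)-1)$ to be as close to~$1$ as~$1-\exp\{-d^{\sqrt h}\}$ the expected number of distinct such events during the process's lifetime must be at least $\log\cals(h)=d^{\sqrt h}\gg h$, and each event contributes an order-one duration to the total occupation time. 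Making this rigorous, I would split the tree lifetime (of order $\exp\{c_\T d^h\}$) into windows of length~$\mathcal t(h)=\exp\{d^{h^{1/5}}\}$, use~\eqref{eq:surv_subset} together with Proposition~\ref{prop:persist_couple} and Lemma~\ref{lem:persist0} to couple the start of each window to a full-tree configuration with constant probability, and then apply a Chernoff bound to the approximately i.i.d.\ per-window Bernoulli events ``$v_{L(h)-1}$ is infected at some time during this window.''

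Once this polynomial lower bound on the occupation time of~$v_{L(h)-1}$ is in hand, the passage to~$\tilde o$ is a purely local coupling, since~$\tilde o$ has~$v_{L(h)-1}$ as its unique neighbor and its dynamics are driven only by the independent Poisson processes~$D^{\tilde o}$ (recovery, rate~$1$) and~$D^{(v_{L(h)-1},\tilde o)}$ (transmission, rate~$\lambda$). Conditional on the infected schedule of~$v_{L(h)-1}$ having Lebesgue measure at least~$T$, a Chernoff bound on these two Poisson processes shows that~$\tilde o$ is infected for total time at least~$T\cdot\lambda/(2(1+\lambda))$ with probability~$\ge 1-e^{-cT}$. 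Taking~$T=h^2$ yields far more than the required~$h$ units of infected time with probability~$\ge 1-1/(3h)$, completing the argument modulo a union bound combining the two exceptional events.

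The main obstacle is the first step, namely rigorously translating the infinite-horizon probability bound~$\mathcal{P}(L(h)-1)\ge 1-\cals(h)^{-1}$ into a quantitative lower bound on the actual occupation time of~$v_{L(h)-1}$, rather than merely into a $1-\cals(h)^{-1}$ lower bound on the event that this time is positive. This asymmetry---between ``at least one infection'' and ``at least $h$ units of infected time''---is the core difficulty, and the approximate independence across tree refreshes will need to be handled carefully, both to ensure that per-window Bernoulli successes really are nearly independent and to ensure that each success contributes a uniformly lower-bounded amount to the occupation time of~$v_{L(h)-1}$.
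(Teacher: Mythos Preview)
Your high-level structure --- bound the occupation time of a vertex one or two steps away from~$\tilde o$, then transfer locally to~$\tilde o$ via a Chernoff argument on the independent Poisson clocks at the last edge --- matches the paper's. The second step is essentially what the paper does (the paper works from $u_1=v_{L(h)-2}$ rather than $v_{L(h)-1}$, but the idea is the same).

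The real divergence is in the first step, and here the paper has a much cleaner device that sidesteps exactly the obstacle you identify. Let $G'=\tilde G\setminus\{v_{L(h)-1},\tilde o\}$ and let $I$ be the set of times at which $u_1=v_{L(h)-2}$ is infected in the process confined to~$G'$, started from~$V\cup\mathcal T_h$. Since $v_{L(h)-1}$ can only be first infected via a transmission arrow from~$u_1$, and the arrow process on that edge is an independent Poisson process of rate~$\lambda$, one has the \emph{exact} identity
\[
\mathbb{P}\big(\barCP[V\cup\mathcal T_h]{\tilde G}{\lambda}(v_{L(h)-1})=0\big)=\mathbb{E}\big[e^{-\lambda|I|}\big].
\]
The defining bound $\mathcal P(L(h)-1)\ge 1-\cals(h)^{-1}$ therefore reads $\mathbb{E}[e^{-\lambda|I|}]\le\cals(h)^{-1}$, and a one-line Markov/Chernoff inequality gives $\mathbb{P}(|I|<h^2)\le e^{\lambda h^2}\cals(h)^{-1}\ll 1/h$. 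This is precisely the conversion from ``at least one infection'' to ``at least $h^2$ units of infected time'' that you flag as the core difficulty, achieved without any window decomposition or regeneration. The paper then also shows (via Corollary~\ref{cor:exp_decay_space}) that no infection reaches~$\tilde o$ during~$[0,\cals(h)]$, so all the occupation of~$\tilde o$ falls in~$[\cals(h),\infty)$.

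Your window argument, by contrast, has a real gap. To run Chernoff you need the per-window success probability~$p$, but the only input is the global bound. The coupling you cite (Proposition~\ref{prop:persist_couple}, Lemma~\ref{lem:persist0}) shows that at the start of each window the actual process \emph{dominates} a fresh full-tree start, hence actual successes stochastically dominate i.i.d.\ Bernoulli$(p)$ successes. That is the right direction for lower-bounding the count once~$p$ is known, but the wrong direction for inferring $(1-p)^N\lesssim\cals(h)^{-1}$ from the global bound --- for that you would need the reverse domination. Moreover, your proposed window length~$\mathcal t(h)=\exp\{d^{h^{1/5}}\}$ is too short to invoke Proposition~\ref{prop:persist_couple} (which needs time~$\cals(h)=\exp\{d^{\sqrt h}\}$), while~\eqref{eq:surv_subset} only gives constant-probability coupling, whose failure probability compounds over many windows. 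These issues are not unfixable, but the exponential-moment identity above makes them all moot.
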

\begin{proof}
We will separately prove that
\begin{equation}\label{eq:time_spent_aux0}\mathbb{P}\left(\int_{0}^\infty \CP[V \cup \mathcal{T}_h]{\tilde{G}}{\lambda}{t}(\tilde{o})\;\mathrm{d}t  > h  \right) > 1 - \frac{1}{4h}\end{equation}	
and
\begin{equation}\label{eq:time_spent_aux}\mathbb{P}\left(\int_{0}^{\cals(h)} \CP[V \cup \mathcal{T}_h]{\tilde{G}}{\lambda}{t}(\tilde{o})\;\mathrm{d}t = 0 \right) > 1 - \frac{1}{4h};\end{equation}
the desired result will then follow.

For~\eqref{eq:time_spent_aux0}, let~$u_1 := v_{L(h)-2},u_2:= v_{L(h)-1}$ be such that~$u_1,u_2,\tilde{o}$ (in this order) are the three last vertices in~$\mathcal{L}_h$, as we move away from~$\mathcal{T}_h$. By Lemma~\ref{lem:well_def} and the definition of~$L(h)$ we have
\begin{equation}\label{eq:ui}\mathbb{P}\left(\barCP[V \cup \mathcal{T}_h]{\tilde{G}}{\lambda}(u_i) = 0 \right)  <\cals(h)^{-1},\quad i = 1,2.\end{equation}
Let~$G'$ denote~$\tilde{G}$ after removing~$u_2$ and~$\tilde{o}$. Define the random set of times
$$I:= \left\{t \ge 0: u_1 \in \CP[V \cup \mathcal{T}_h]{G'}{\lambda}{t}\right\}.$$
We have
\begin{equation}\label{eq:expo_inf}\mathbb{P}\left(\barCP[V \cup \mathcal{T}_h]{\tilde{G}}{\lambda}(u_2) = 0\right) = \mathbb{E}\big[1- e^{-{\lambda} |I|} \big],\end{equation}
where~$|I|$ denotes the Lebesgue measure of~$I$. To justify this, note that the first time that~$u_2$ becomes infected in~$(\xi^{V \cup \mathcal{T}_h}_{\tilde{G},\lambda;t})_{t \ge 0}$ is necessarily through a transmission from~$u_1$. Hence, one can decide if~$u_2$ is ever infected in this process by inspecting whether there is a point in time at which~(1)~$u_1$ is infected in process confined to~$G'$, and~(2) there is a transmission arrow from~$u_1$ to~$u_2$.  The number of such time instants is a Poisson random variable with parameter~${\lambda} |I|$, justifying~\eqref{eq:expo_inf}. 

We bound 
\begin{align*}
\cals(h)^{-1} \stackrel{\eqref{eq:ui}}{>} \mathbb{P}\left(\barCP[V \cup \mathcal{T}_h]{\tilde{G}}{\lambda}(u_2)= 0  \right) \stackrel{\eqref{eq:expo_inf}}{=} \mathbb{E}\big[e^{-{\lambda}|I|}\big] \ge e^{-{\lambda} h^2}\cdot \mathbb{P}(|I| < h^2),
\end{align*}
so
\begin{equation}\label{eq:less_than_h2}
\mathbb{P}(|I|<h^2) \le e^{{\lambda} h^2}\cdot \cals(h)^{-1} \ll \frac{1}{8h}
\end{equation}
for~$h$ large enough.

We next claim that
\begin{equation}
\label{eq:claim_inf} \mathbb{P}\left(\left. \barCP[V \cup \mathcal{T}_h]{\tilde{G}}{\lambda}(\tilde{o}) \le h \right| |I| \ge h^2 \right) < e^{-h}.
\end{equation}
To prove this, we observe that on the event~$\{|I| \ge h^2\}$, we can find an increasing sequence of times~$S_0,\ldots, S_{\lfloor h^2/2 \rfloor} \in I$ with
$$|I \cap [S_j+1,S_{j+1}]| \ge 2\quad \text{for each }j.$$
Next, note that for each interval~$[S_j,S_{j+1}]$, with a probability that is positive and depends only on~${\lambda}$, the infection is sent to~$\tilde{o}$ and remains there for one unit of time. This occurring independently in different time intervals,~\eqref{eq:claim_inf} follows from a simple Chernoff bound. Now,~\eqref{eq:time_spent_aux0} follows from~\eqref{eq:less_than_h2} and~\eqref{eq:claim_inf}.

We now turn to~\eqref{eq:time_spent_aux}. Note that the event inside the probability there is contained in the event that there is an infection path starting  at some time~$s$ and ending at some time~$t$ with~$s \le t \le \cals(h)$, connecting the two endpoints of~$\mathcal{L}_h$. By~\autoref{cor:exp_decay_space}, the probability that such a path exists is smaller than
$$(\cals(h)+1) \cdot \exp\{-c_\L \cdot L(h)\} \ll \frac{1}{4h}$$
if~$h$ is large enough.
\end{proof}

\begin{proof}[Proof of Proposition~\ref{prop:time_spent}]
The statements follow readily from Lemmas~\ref{lem:time_spent0},~\ref{lem:time_spent_baby} and~\ref{lem:time_spent}.
\end{proof}

\subsection{Proof of Proposition~\ref{prop:no_go}}
Proving Proposition~\ref{prop:no_go} is now just a matter of putting together bounds that were obtained  earlier.
\label{ss:lower_lambda}
\begin{proof}[Proof of Proposition~\ref{prop:no_go}]
Fix $\lambda' < \lambda$. Let~$B$ be the event that, in the graphical construction with parameter~$\lambda'$, there is an infection path starting from~$(v_0,s)$ for some~$s \le 2 \cals(h) \cdot S(h)$ (where~$v_0$ is the vertex of~$\mathcal{L}_h$ neighboring the root~$\rho$ of~$\mathcal{T}_h$), ending at~$(\tilde{o},t)$ for some~$t > s$, and entirely contained in~$\mathcal{L}_{h}$. Then, by a union bound,
	\begin{equation}\label{eq:two_parts} \mathbb{P}\left(\barCP[A]{\tilde{G}}{\lambda'}(\tilde{o}) > 0 \right) \leq \mathbb{P}\left(\CP[\tilde{V}]{\tilde{G}}{\lambda'}{2\cals(h) \cdot S(h)} \neq \varnothing\right) + \mathbb{P}_{\lambda'}(B).\end{equation}
	The first term is bounded using Markov's inequality and monotonicity:
	\begin{align*}\mathbb{P}\left(\CP[\tilde{V}]{\tilde{G}}{\lambda'}{2\cals(h) S(h)} \neq \varnothing\right) &\leq  (2\cals(h))^{-1}.
	\end{align*}
Next, note that the occurrence of~$B$ depends only on the graphical construction of the contact process with parameter~$\lambda'$ on the line segment connecting~$v_0$ and~$\tilde{o} = v_{L(h)}$. Therefore, using Lemma~\ref{lem:exp_decay} (and also recalling the definition of~$\mathcal{p}$ from~\eqref{eq:def_of_p}), we have
\begin{equation}
\mathbb{P}_{\lambda'}(B) \le (2\cals(h)\cdot S(h)+1)\cdot e\cdot \mathcal{p}_{\lambda'}(L(h)) \le 7\cals(h)\cdot S(h) \cdot \mathcal{p}_{\lambda'}(L(h)).
\end{equation}
Bounding the right-hand side using Lemma~\ref{lem:line_lambda}, we obtain
$$\mathbb{P}_{\lambda'}(B) \le 7\cals(h)\cdot S(h)\cdot \mathcal{p}_\lambda(L(h))\cdot (\eta_{\lambda, \lambda'})^{-L(h)}.$$
By using~$\mathcal{p}(L(h)) \le \cals(h)^3/S(h)$ as in~\eqref{eq:main_bounds2} and~$L(h) \ge d^{\frac{3h}{4}}$ as in~\eqref{eq:lower_bound_L}, the right-hand side above is smaller than 
$$7\cals(h)^4 \cdot (\eta_{\lambda,\lambda'})^{-d^{\frac{3h}{4}}}, $$
which is much smaller than~$(2\cals(h))^{-1}$ if~$h$ is large enough (depending on~$\lambda$ and~$\lambda'$).
\end{proof}

\newpage
\section*{Appendix: proof of Proposition~\ref{prop:persist_couple} }
\label{app1}
	We will first state and prove some auxiliary claims.
\begin{claim}
		For any~$A \subset \tilde{V}\backslash \mathcal{T}_h$ we have
		\begin{equation*}
		\P\left(\text{either } \CP[A]{\tilde{G}}{\lambda}{\sqrt{\cals(h)}} = \varnothing \;\text{ or }\; \CP[A]{\tilde{G}}{\lambda}{t} \cap \mathcal{T}_h \neq \varnothing \text{ for some }t \le \sqrt{\cals(h)}\right) \ge \frac12.
		\end{equation*}
\end{claim}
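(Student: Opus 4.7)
The idea is to bound the complementary event
\[
F := \left\{\CP[A]{\tilde{G}}{\lambda}{\sqrt{\cals(h)}} \neq \varnothing,\;\; \CP[A]{\tilde{G}}{\lambda}{t} \cap \mathcal{T}_h = \varnothing \text{ for all } t \le \sqrt{\cals(h)} \right\}.
\]
I will show that~$\mathbb{P}(F) \to 0$ as~$h \to \infty$, which is much stronger than the required~$\mathbb{P}(F)\le 1/2$.

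The central observation is that on~$F$ no infection path from~$A\times\{0\}$ touches~$\mathcal{T}_h$ up to time~$\sqrt{\cals(h)}$. Let~$\tilde{G}'$ denote the subgraph of~$\tilde{G}$ induced by~$\tilde{V}\setminus\mathcal{T}_h$: since the only edges of~$\tilde{G}$ joining~$V$ with~$\mathcal{L}_h$ are both incident to~$\rho \in \mathcal{T}_h$,~$\tilde{G}'$ is the disjoint union of~$G$ and~$\mathcal{L}_h$. My first step would be to argue that on~$F$ one has~$\CP[A]{\tilde{G}}{\lambda}{t} = \CP[A]{\tilde{G}'}{\lambda}{t}$ for all~$t \le \sqrt{\cals(h)}$: the inclusion~$\CP[A]{\tilde{G}'}{\lambda}{t}\subset\CP[A]{\tilde{G}}{\lambda}{t}$ is immediate from the graphical construction, and the reverse follows because any~$\lambda$-infection path in~$\tilde{G}$ from~$A\times\{0\}$ to~$(v,t)$ that visited~$\mathcal{T}_h$ at some intermediate time~$s$ would force~$\CP[A]{\tilde{G}}{\lambda}{s}\cap\mathcal{T}_h\neq\varnothing$, contradicting~$F$.

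Writing~$A_G:=A\cap V$ and~$A_\mathcal{L}:=A\cap\mathcal{L}_h$, this reduction yields
\[
\mathbb{P}(F) \le \mathbb{P}\left(\CP[A_G]{G}{\lambda}{\sqrt{\cals(h)}} \neq \varnothing\right) + \mathbb{P}\left(\CP[A_\mathcal{L}]{\mathcal{L}_h}{\lambda}{\sqrt{\cals(h)}} \neq \varnothing\right).
\]
I would bound the first term by~$T_G/\sqrt{\cals(h)}$ using Markov's inequality on the extinction time, where~$T_G$ is the expected extinction time of the contact process on~$G$ started from full occupancy (finite since~$G$ is a fixed finite graph, and independent of~$h$). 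For the second term, monotonicity (embedding~$\mathcal{L}_h \hookrightarrow \mathbb{Z}$), duality, a union bound over the~$L(h)+1$ vertices of~$\mathcal{L}_h$, and the exponential bound~\eqref{eq:exp_decay_time} give at most~$(L(h)+1)\exp\{-c_\lambda \sqrt{\cals(h)}\}$; combining with~$L(h)\le d^{2h}$ from Lemma~\ref{prop:bounds} and the doubly-exponential growth~$\sqrt{\cals(h)}=\exp\{d^{\sqrt h}/2\}$, this is negligible. Both terms tend to~$0$ as~$h\to\infty$, yielding the claim.

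The only step requiring genuine care is the identification~$\CP[A]{\tilde{G}}{\lambda}{\cdot}=\CP[A]{\tilde{G}'}{\lambda}{\cdot}$ on~$F$: this is what permits reducing a survival event on the complicated graph~$\tilde{G}$ to two decoupled, well-understood survival events on the finite tree~$G$ and the subcritical line segment~$\mathcal{L}_h$. Once this coincidence is established, the two resulting bounds are standard consequences of subcriticality and the fact that~$\sqrt{\cals(h)}$ grows much faster than any quantity involving~$G$ or~$L(h)$.
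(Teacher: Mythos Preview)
Your proof is correct and follows essentially the same approach as the paper's: both take the complement, reduce to the confined process on~$\tilde{V}\setminus\mathcal{T}_h = G \sqcup \mathcal{L}_h$, split via a union bound into the~$G$-part and the~$\mathcal{L}_h$-part, and handle these respectively using the finiteness of~$G$ and subcritical decay on the segment (invoking the bound~$L(h)\le d^{2h}$, which is proved independently of Proposition~\ref{prop:persist_couple} and hence non-circular here). The only cosmetic difference is that for the~$\mathcal{L}_h$-part the paper routes through Lemma~\ref{lem:more_than_log} while you apply~\eqref{eq:exp_decay_time} directly with a union bound over vertices; both rest on the same exponential decay.
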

	\begin{proof} Let~$G'$ be the graph obtained  by removing~$\mathcal{T}_h$ from~$\tilde{G}$ (so that~$G'$ is the disconnected union of~$G$ and~$\mathcal{L}_h$). 
		The complement of the event in the probability above is
		$$\left\{\CP[A]{G'}{\lambda}{\sqrt{\cals(h)}} \neq \varnothing \right\} \subset \left\{\CP[V \cup \mathcal{L}_h]{G'}{\lambda}{\sqrt{\cals(h)}} \neq \varnothing \right\} \subset \{\tau_1 > \sqrt{\cals(h)}\} \cup \{\tau_2 > \sqrt{\cals(h)}\},$$
where
$$\tau_1 = \inf\left\{t:\CP[V]{G}{\lambda}{t} = \varnothing \right\},\quad \tau_2 = \inf\left\{t: \CP{\mathcal{L}_h}{\lambda}{t} = \varnothing\right\}.$$
Since~$G$ is fixed  while~$h$ can be  taken arbitrarily large, we can assume
$$\mathbb{P}\left(\tau_1 > \sqrt{\cals(h)}\right) < \frac14.$$
Next, noting that, by \autoref{prop:bounds},
$$\cals(h) = \exp\{d^{\sqrt{h}}\} \gg \left(\log \left( d^{\frac{3h}{2}}\right)\right)^2  \stackrel{\eqref{eq:main_bounds}}{\geq} (\log L(h))^2,$$
we have, by \autoref{lem:more_than_log},
$$\mathbb{P}\left(\tau_2 > \sqrt{\cals(h)}\right) \le \mathbb{P}\left(\tau_2 > (\log L(h))^2 \right) \stackrel{\eqref{eq:more_than_log}}{<} \frac14$$
if~$h$ is large enough.	\end{proof}
For the next two claims we let $c_\T$ be as in \autoref{basic_trees}.
	\begin{claim}\label{cl:second}
		For any~$A \subset \tilde{V}$ we have
		$$\mathbb{P}\left( \text{either }\;\CP[A]{\tilde{G}}{\lambda}{2\sqrt{\cals(h)} } = \varnothing \;\text{ or }\;\CP[A]{\tilde{G}}{\lambda}{2\sqrt{\cals(h)}} \supset \CP{\mathcal{T}_h}{\lambda}{2\sqrt{\cals(h)}} \right) > \frac{c_\mathbb{T}}{2}.$$
	\end{claim}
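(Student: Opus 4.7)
The plan is to reduce Claim 2 to the previous claim combined with the coupling estimate~\eqref{eq:surv_subset} of Proposition~\ref{basic_trees}. Introduce the stopping time $\tau := \inf\{t \ge 0 : \CP[A]{\tilde{G}}{\lambda}{t} \cap \mathcal{T}_h \neq \varnothing\}$ (taken as $+\infty$ if this set is empty), and set
$$F_1 := \{\CP[A]{\tilde{G}}{\lambda}{\sqrt{\cals(h)}} = \varnothing\},\qquad F_2 := \{\tau \le \sqrt{\cals(h)}\}.$$
First I would observe that $\mathbb{P}(F_1 \cup F_2) \ge 1/2$ for any $A \subset \tilde{V}$: if $A \cap \mathcal{T}_h \neq \varnothing$ then $\tau = 0$ and $F_2$ holds surely, while if $A \subset \tilde{V} \setminus \mathcal{T}_h$ this is precisely the content of the preceding claim.

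On $F_1$ the infection is already extinct at time $\sqrt{\cals(h)}$, hence also at $2\sqrt{\cals(h)}$, giving the first alternative of the statement. On $F_2$, I would apply the strong Markov property at $\tau$ together with~\eqref{eq:surv_subset} to the (random, non-empty) set $A' := \CP[A]{\tilde{G}}{\lambda}{\tau} \cap \mathcal{T}_h$. Denoting by $G'$ the event that the contact process on $\mathcal{T}_h$ started from $A'$ at time $\tau$ coincides at time $\tau + \mathcal{t}(h)$ with the contact process on $\mathcal{T}_h$ started from full occupancy at time $\tau$, one obtains $\mathbb{P}(G' \mid \mathcal{F}_\tau)\cdot\mathds{1}_{F_2} \ge c_\T\cdot\mathds{1}_{F_2}$, hence $\mathbb{P}(G' \cap F_2) \ge c_\T\, \mathbb{P}(F_2)$.

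On $G' \cap F_2$ the containment claimed then follows by chaining three monotonicity facts: graph monotonicity gives that $\CP[A]{\tilde{G}}{\lambda}{\tau + \mathcal{t}(h)} \cap \mathcal{T}_h$ dominates the process on $\mathcal{T}_h$ started from $A'$ at time $\tau$; the event $G'$ replaces this by the process on $\mathcal{T}_h$ started from the full $\mathcal{T}_h$ at time $\tau$; and initial-condition monotonicity (applied with the trivial inclusion $\CP{\mathcal{T}_h}{\lambda}{\tau} \subset \mathcal{T}_h$) shows that the latter dominates $\CP{\mathcal{T}_h}{\lambda}{\tau + \mathcal{t}(h)}$. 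Since $\tau + \mathcal{t}(h) \le \sqrt{\cals(h)} + \mathcal{t}(h) \le 2\sqrt{\cals(h)}$ for $h$ large (because $\mathcal{t}(h) = \exp\{d^{h^{1/5}}\} \ll \exp\{d^{\sqrt{h}}/2\} = \sqrt{\cals(h)}$ in view of~\eqref{eq:cals}), this inclusion then propagates to time $2\sqrt{\cals(h)}$ via the graphical construction, producing the second alternative.

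To conclude, I would write
$$\mathbb{P}(F_1 \cup (G' \cap F_2)) \ge \mathbb{P}(F_1) + \mathbb{P}(G' \cap F_2) - \mathbb{P}(F_1 \cap G').$$
The correction $\mathbb{P}(F_1 \cap G')$ is controlled by~\eqref{eq:metastability}: on $F_1 \cap G'$ one has $\CP{\mathcal{T}_h}{\lambda}{2\sqrt{\cals(h)}} \subset \CP[A]{\tilde G}{\lambda}{2\sqrt{\cals(h)}} = \varnothing$, an event of probability at most $\exp\{-c_\T d^h\}$ for large $h$. Assuming without loss of generality that $c_\T \le 1$, the bound $\mathbb{P}(F_1) + c_\T \mathbb{P}(F_2) \ge c_\T(\mathbb{P}(F_1) + \mathbb{P}(F_2)) \ge c_\T/2$ gives the claim after absorbing the negligible error into the strict inequality for $h$ large. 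The main nuisance throughout is keeping track of the three distinct processes on $\mathcal{T}_h$ appearing in the coupling (started respectively from $A'$ at time $\tau$, from $\mathcal{T}_h$ at time $\tau$, and from $\mathcal{T}_h$ at time $0$) and verifying which monotonicity or coupling statement relates each consecutive pair.
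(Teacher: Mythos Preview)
Your argument is correct and follows essentially the same route as the paper: reduce to the previous claim to guarantee that by time~$\sqrt{\cals(h)}$ the process has either died or touched~$\mathcal{T}_h$, then invoke the coupling estimate~\eqref{eq:surv_subset} via the strong Markov property at the first hitting time of the tree, and chain the three monotonicity/coupling inclusions to pass from~$A'$ to~$\CP{\mathcal{T}_h}{\lambda}{\cdot}$.

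The only noteworthy difference is organizational. The paper works with the \emph{disjoint} events $\{\tau'\le \tau'',\;\tau'\le\sqrt{\cals(h)}\}$ and $\{\tau''<\tau',\;\tau''\le\sqrt{\cals(h)}\}$ (where~$\tau'$ is the extinction time and~$\tau''$ your~$\tau$), so no inclusion--exclusion correction is needed and the bound~$c_\T\cdot\mathbb{P}(\tau\le\sqrt{\cals(h)})\ge c_\T/2$ drops out directly. Your overlapping pair~$F_1,F_2$ forces you to subtract~$\mathbb{P}(F_1\cap G')$ and then control it via~\eqref{eq:metastability}; this works, but the resulting bound is~$c_\T/2-\exp\{-c_\T d^h\}$, and your appeal to an unspecified ``strict inequality'' to absorb this error is a bit loose (the cleanest fix is simply to replace~$F_1$ by~$F_1\setminus F_2$ from the start, which is exactly the paper's move). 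Either way the substance is identical.
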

	\begin{proof}
		Define
		$$\tau':= \inf\left\{t: \CP[A]{\tilde{G}}{\lambda}{t} = \varnothing\right\},\quad \tau'':= \inf\left\{t: \CP[A]{\tilde{G}}{\lambda}{t}\cap \mathcal{T}_h \neq \varnothing \right\}$$
		and let~$\tau = \min(\tau',\tau'')$.
		By the first claim we have
		\begin{equation}\mathbb{P}(\tau \le \sqrt{\cals(h)}) \ge \frac12.\label{eq:aux_2th0}\end{equation} Next, note that
		\begin{equation}\mathbb{P}\left(\CP[A]{\tilde{G}}{\lambda}{2\sqrt{\cals(h)}} = \varnothing \mid \tau = \tau' \le \sqrt{\cals{(h)}}\right) = 1.\label{eq:aux_2th1}\end{equation}
		We will prove that
		\begin{equation}\mathbb{P}\big(\CP[A]{\tilde{G}}{\lambda}{2\sqrt{\cals(h)}} \supset \CP{\mathcal{T}_h}{\lambda}{2\sqrt{\cals(h)}} \mid \tau = \tau'' \le \sqrt{\cals(h)}\big) > c_\mathbb{T}.\label{eq:aux_2th2}\end{equation}
		Taken together,~\eqref{eq:aux_2th0}, \eqref{eq:aux_2th1} and~\eqref{eq:aux_2th2} give the statement of the claim.
		
		To prove~\eqref{eq:aux_2th2}, we first introduce some notation. Given~$A'\subset \mathcal{T}_h$, we write
		$$\xi^{A'}_{\mathcal{T}_h,\lambda; t_1,t_2}(x) := \mathds{1}\{A' \times \{t_1\} \rightsquigarrow (x,t_2)\},\quad t_1 \le t_2,\; x \in \mathcal{T}_h. $$
		Note that~$(\xi^{A'}_{\mathcal{T}_h,\lambda; t_1,t_1+s}:s\ge 0)$ has same distribution as~$(\xi^{A'}_{\mathcal{T}_h,\lambda;s}:s \ge 0)$. Next, on the event~$\{\tau''<\infty\}$ let~$A':=\xi^A_{G,\lambda;\tau''} \cap \mathcal{T}_h$. Define the event
		$$B:= \{\tau'' < \infty \}\cap \left\{ \CP[A']{\mathcal{T}_h}{\lambda}{\tau'', \tau'' + \sqrt{\cals(h)}} \supset \CP{\mathcal{T}_h}{\lambda}{\tau'', \tau''+\sqrt{\cals(h)}} \right\}. $$
		By Proposition~\ref{basic_trees} and the strong Markov property we have~$\mathbb{P}(B\mid \tau''< \sqrt{\cals(h)}) > c_\mathbb{T}$. Moreover, on~$B$ we have
		\begin{align*}\CP[A]{\tilde{G}}{\lambda}{2\sqrt{\cals(h)}} &\supset \CP[A']{\mathcal{T}_h}{\lambda}{\tau'', 2\sqrt{\cals(h)} }\supset \CP{\mathcal{T}_h}{\lambda}{\tau'',2\sqrt{\cals(h)}} \supset \CP{\mathcal{T}_h}{\lambda}{2\sqrt{\cals(h)}}.
		\end{align*}
		This completes the proof.
	\end{proof}
	\begin{claim}\label{cl:final} For any~$A \subset \tilde{V}$ and~$h$ large enough we have
		\begin{equation}\mathbb{P}\left(\begin{array}{l}\text{either }\CP[A]{\tilde{G}}{\lambda}{\cals(h)/2} = \varnothing \text{ or } \\[.2cm]\CP[A]{\tilde{G}}{\lambda}{\cals(h)/2} \supset \CP{\mathcal{T}_h}{\lambda}{\cals(h)/2}\end{array} \right) > 1-\left(1- \frac{c_\mathbb{T}}{2} \right)^{\left \lfloor \sqrt{\cals(h)}/4\right\rfloor}.\label{eq:from_claim}\end{equation}
	\end{claim}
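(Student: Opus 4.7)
The plan is to iterate Claim~\ref{cl:second} over disjoint time windows of length~$2\sqrt{\cals(h)}$ inside~$[0,\cals(h)/2]$, and to invoke monotonicity of the graphical construction to propagate any successful coupling forward in time to~$\cals(h)/2$. Specifically, I would set~$N := \lfloor \sqrt{\cals(h)}/4 \rfloor$ and~$t_i := 2i\sqrt{\cals(h)}$ for~$0 \le i \le N$, noting that~$t_N \le \cals(h)/2$. Writing~$\sigma_i := \CP[A]{\tilde{G}}{\lambda}{t_i}$, let~$E_i$ denote the event that in the window~$[t_i,t_{i+1}]$ either~$\sigma_{i+1} = \varnothing$, or~$\sigma_{i+1}$ contains the state at time~$t_{i+1}$ of a ``fresh'' copy of the contact process on~$\mathcal{T}_h$ started from full occupancy at time~$t_i$ (using the graphical representation restricted to that window, as in the time-translated notation introduced in the proof of Claim~\ref{cl:second}).

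By the strong Markov property applied at time~$t_i$, together with a direct time-translation of Claim~\ref{cl:second} with initial configuration~$\sigma_i$ on~$\tilde{G}$, we obtain
$$\mathbb{P}\bigl(E_i \;\big|\; \sigma_i,\, E_0^c,\ldots,E_{i-1}^c\bigr) \ge \frac{c_\mathbb{T}}{2}.$$
Since, conditional on~$\sigma_i$, the event~$E_i$ depends only on the graphical data in the disjoint window~$[t_i,t_{i+1}]$, these conditional bounds combine multiplicatively to give
$$\mathbb{P}\!\left(\bigcap_{i=0}^{N-1}E_i^c\right) \le \left(1 - \frac{c_\mathbb{T}}{2}\right)^{N},$$
which matches the right-hand side of~\eqref{eq:from_claim}.

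It remains to check that the occurrence of any one~$E_j$ implies the event displayed in~\eqref{eq:from_claim}. In the case~$\sigma_{j+1}=\varnothing$, monotonicity in time instantly gives~$\CP[A]{\tilde{G}}{\lambda}{\cals(h)/2} = \varnothing$. In the other case, monotonicity in the initial configuration (applied to the contact process on~$\mathcal{T}_h$ run with the graphical data on~$[t_j,t_{j+1}]$, comparing initial state~$\CP{\mathcal{T}_h}{\lambda}{t_j} \subset \mathcal{T}_h$ to initial state~$\mathcal{T}_h$) shows that the ``fresh'' tree copy at time~$t_{j+1}$ contains~$\CP{\mathcal{T}_h}{\lambda}{t_{j+1}}$. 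Combining this with the inclusion guaranteed by~$E_j$ yields~$\sigma_{j+1} \supset \CP{\mathcal{T}_h}{\lambda}{t_{j+1}}$, and then a final application of monotonicity in the initial configuration, run from time~$t_{j+1}$ to~$\cals(h)/2$ on the graphical construction of~$\tilde{G}$ (which contains~$\mathcal{T}_h$ as a subgraph), propagates the inclusion forward to give~$\CP[A]{\tilde{G}}{\lambda}{\cals(h)/2} \supset \CP{\mathcal{T}_h}{\lambda}{\cals(h)/2}$.

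The only place requiring real care, and the step I would emphasize most, is distinguishing between the \emph{fresh} copy of the contact process on~$\mathcal{T}_h$ started from full occupancy at each time~$t_i$---which is what Claim~\ref{cl:second} directly couples to---and the \emph{original} copy~$\CP{\mathcal{T}_h}{\lambda}{\cdot}$ started at time~$0$, which is what appears in~\eqref{eq:from_claim}. The monotonicity sandwich described above resolves this cleanly and does not lose anything in the probability estimate, since the comparison between fresh and original tree copies is deterministic on the same graphical data.
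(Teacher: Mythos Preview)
Your proof is correct and follows essentially the same approach as the paper: both iterate Claim~\ref{cl:second} over the $\lfloor \sqrt{\cals(h)}/4 \rfloor$ disjoint windows of length~$2\sqrt{\cals(h)}$, use the Markov property to get the product bound, and then propagate the inclusion to time~$\cals(h)/2$ by monotonicity. The only difference is cosmetic: the paper defines its events~$F_i$ directly in terms of the original tree process~$\CP{\mathcal{T}_h}{\lambda}{t_i}$ (absorbing your ``fresh vs.\ original'' monotonicity step into the application of Claim~\ref{cl:second}), whereas you keep this step separate and spell it out explicitly.
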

In words, the event in the probability in the left-hand side can be described by saying that  one of two alternatives has to hold true for the contact process on~$\tilde{G}$ started from~$A$. The first alternative is that by time~$\cals(h)/2$, this process dies. The second alternative is that at time~$\cals(h)/2$, the occupation of this process inside~$\mathcal{T}_h$ is large enough that it contains the set~$\xi^{\mathcal{T}_h}_{\mathcal{T}_h,\lambda;\cals(h)/2}$. This set is obtained by running the contact process only inside~$\mathcal{T}_h$, from time 0 to time~$\cals(h)/2$, starting from full occupancy at time~0.

	\begin{proof}
		For~$1 \leq i \leq \left\lfloor \sqrt{\cals(h)}/4\right\rfloor$, define the event
		$$F_i := \left\{ \CP[A]{\tilde{G}}{\lambda}{i\cdot 2\sqrt{\cals(h)}} = \varnothing \right\} \cup \left\{ \CP[A]{\tilde{G}}{\lambda}{i\cdot 2\sqrt{\cals(h)}} \supset \CP{\mathcal{T}_h}{\lambda}{i \cdot 2\sqrt{\cals(h)}}  \right\}.$$
		We then note that the event in the probability in~\eqref{eq:from_claim} is contained in~$\cup F_i$, and by Claim~\ref{cl:second},
		$$\mathbb{P}\left(\cap_i F_i^c \right) \le \left(1- \frac{c_\mathbb{T}}{2} \right)^{\left \lfloor \sqrt{\cals(h)}/4\right\rfloor}.$$
	\end{proof}
	
	We now introduce notation for the time dual of the contact process: if~$G' = (V',E')$ is a graph, we write
	$$\tilde{\xi}^A_{{G}',\lambda;s,t}(x):= \mathds{1}\{(x,s) \rightsquigarrow A \times \{t\} \},\quad x \in V',\;A \subset V',\; s \le t$$
	(as usually, we abuse notation and sometimes treat~$\tilde{\xi}^A_{G',\lambda; s,t}$ as a subset of~$V'$ rather than a configuration of~0's and~1's). Note that for any $s,t > 0$ and $x,y \in \tilde{V}$ we have
	\begin{equation}\label{eq:dual_inclusion}
	\left\{ \CP[\{x\}]{G'}{\lambda}{s}\cap \tilde{\xi}^{\{y\}}_{{G}',\lambda;s,t}  \neq \varnothing \right\} \subset \left\{ y \in \CP[\{x\}]{G'}{\lambda}{t} \right\},\qquad s\le t.
	\end{equation}
	
\begin{proof}[Proof of Proposition~\ref{prop:persist_couple}]
	Fix $x \in \tilde{V}$ and define
	$$E_1 := \left\{\text{for all }x \in \tilde{V}, \text{ either }\CP[\{x\}]{\tilde{G}}{\lambda}{\cals(h)/2} = \varnothing \text{ or } \CP[\{x\}]{\tilde{G}}{\lambda}{\cals(h)/2} \supset \CP{\mathcal{T}_h}{\lambda}{\cals(h)/2} \right\}$$

Note that~$E_1$ is the joint occurrence of the event of Claim~\ref{cl:final} (that is, the event inside the probability in the left-hand side of~\eqref{eq:from_claim}), with~$A$ ranging over all sets of the form~$\{x\}$, with~$x \in \tilde{V}$.


By Claim \ref{cl:final}, $E_1$ has probability larger than~$1 - |\tilde{V}|\cdot \left(1- \frac{c_\mathbb{T}}{2} \right)^{\left \lfloor \sqrt{\cals(h)}/4\right\rfloor}$. Note also that for any $A \subset \tilde{V}$ we have
	\begin{equation}\label{eq:proof_prop5}
	E_1 \cap \left\{ \CP[A]{\tilde{G}}{\lambda}{\cals(h)/2} \neq \varnothing \right\} \subset \left\{ \CP[A]{\tilde{G}}{\lambda}{\cals(h)/2} \supset \CP{\mathcal{T}_h}{\lambda}{\cals(h)/2} \right\}.
	\end{equation}

Define the event
	$$E_2 := \left\{\begin{array}{l}\text{for all }x \in \tilde{V}, \text{ either } \tilde{\xi}^{\{x\}}_{\tilde{G},\lambda;\cals(h)/2,\cals(h)} = \varnothing\\[.2cm] \text{or } \tilde{\xi}^{\{x\}}_{\tilde{G},\lambda;\cals(h)/2,\cals(h)} \supset \tilde{\xi}^{\mathcal{T}_h}_{\mathcal{T}_h,\lambda;\cals(h)/2,\cals(h)} \end{array}  \right\}.$$
This event can be interpreted in the same way as~$E_1$, except that it pertains to the  dual process. 	By invariance of Poisson processes under time reversal,~$E_2$ has the same probability as~$E_1$. It is also the case that, for any~$A \subset \tilde{V}$,
	\begin{equation}\label{eq:proof_prop5_dual}
	E_2 \cap \left\{ \tilde{\xi}^{A}_{\tilde{G},\lambda;\cals(h)/2,\cals(h)} \neq \varnothing \right\} \subset \left\{ \tilde{\xi}^{A}_{\tilde{G},\lambda;\cals(h)/2,\cals(h)} \supset \tilde{\xi}^{\mathcal{T}_h}_{\mathcal{T}_h,\lambda;\cals(h)/2,\cals(h)} \right\}.
	\end{equation}
	Finally, Proposition~\ref{basic_trees} implies that if~$h$ is large enough, the event
	$$E_3 := \{\CP{\mathcal{T}_h}{\lambda}{\cals(h)} \neq \varnothing\} = \left\{\CP{\mathcal{T}_h}{\lambda}{\cals(h)/2} \cap \tilde{\xi}^{\mathcal{T}_h}_{\mathcal{T}_h,\lambda;\cals(h)/2,\cals(h)}\neq \varnothing \right\}$$
	has probability larger than~$1-{\exp\{-c_\mathbb{T} d^h\}}$. Putting our bounds together, we have
	$$\mathbb{P}(E_1^c \cup E_2^c \cup E_3^c) \leq 2|\tilde{V}|\cdot \left(1- \frac{c_\mathbb{T}}{2} \right)^{\left \lfloor \sqrt{\cals(h)}/4\right\rfloor} + \exp\{-c_\mathbb{T} d^h\} \ll  \cals(h)^{-2}$$
	if~$h$ is large enough.
	
	We now claim that for any~$A \subset \tilde{V}$ we have
	
	\begin{equation*}
	E_1\cap E_2 \cap E_3 \subset \left\{  \text{either }\CP[A]{\tilde{G}}{\lambda}{\cals(h)} = \varnothing  \;\text{ or }\; \CP[A]{\tilde{G}}{\lambda}{\cals(h)} = \CP[\tilde{V}]{\tilde{G}}{\lambda}{\cals(h)} \right\}.
	\end{equation*}
To prove this, it suffices to prove
\begin{equation*}\label{eq:contain_coup}
E_1 \cap E_2 \cap E_3 \cap \left\{\CP[A]{\tilde{G}}{\lambda}{\cals(h)} \neq \varnothing\right\} \subset \left\{\CP[A]{\tilde{G}}{\lambda}{\cals(h)} = \CP[\tilde{V}]{\tilde{G}}{\lambda}{\cals(h)}\right\} \quad \forall A \subset \tilde{V},
\end{equation*}
which in turn is implied by proving:
\begin{equation}\label{eq:contain_coup}
E_1 \cap E_2 \cap E_3 \cap \left\{\CP[A]{\tilde{G}}{\lambda}{\cals(h)/2} \neq \varnothing\right\} \subset \left\{\CP[A]{\tilde{G}}{\lambda}{\cals(h)} = \CP[\tilde{V}]{\tilde{G}}{\lambda}{\cals(h)}\right\} \quad \forall A \subset \tilde{V}.
\end{equation}
So for the rest of this proof, we assume that the event in the left-hand side of~\eqref{eq:contain_coup} occurs. Fix~$x \in \tilde{V}$; we would like to prove that~$\CP[A]{\tilde{G}}{\lambda}{\cals(h)}(x) = \CP[\tilde{V}]{\tilde{G}}{\lambda}{\cals(h)}(x)$. This is immediate in case~$\CP[\tilde{V}]{\tilde{G}}{\lambda}{\cals(h)}(x) = 0$, so from now on we also assume that~$\CP[\tilde{V}]{\tilde{G}}{\lambda}{\cals(h)}(x) =1$, which also implies that
\begin{equation}
\tilde{\xi}^{\{x\}}_{\tilde{G},\lambda;\cals(h)/2,\cals(h)} \neq \varnothing.
\end{equation}

Now, since~$E_1 \cap \left\{\CP[A]{\tilde{G}}{\lambda}{\cals(h)/2} \neq \varnothing\right\}$ occurs, by~\eqref{eq:proof_prop5} we have that
\begin{equation*}
\CP[A]{\tilde{G}}{\lambda}{\cals(h)/2} \supset \CP[\mathcal{T}_h]{\mathcal{T}_h}{\lambda}{\cals(h)/2}.
\end{equation*}
Moreover, since~$E_2 \cap \left\{\tilde{\xi}^{\{x\}}_{\tilde{G},\lambda;\cals(h)/2,\cals(h)}  \neq \varnothing\right\}$ occurs, by~\eqref{eq:proof_prop5_dual} we have that 
\begin{equation*}
\tilde{\xi}^{\{x\}}_{\tilde{G},\lambda;\cals(h)/2,\cals(h)}  \supset \tilde{\xi}^{\mathcal{T}_h}_{\mathcal{T}_h,\lambda;\cals(h)/2,\cals(h)}. 
\end{equation*}
Finally, since~$E_3$ occurs, there exists some~$z \in \CP[\mathcal{T}_h]{\mathcal{T}_h}{\lambda}{\cals(h)/2} \cap \tilde{\xi}^{\mathcal{T}_h}_{\mathcal{T}_h,\lambda;\cals(h)/2,\cals(h)}$, so the two set inclusions we just obtained imply that~$z$ is both in~$\CP[A]{\tilde{G}}{\lambda}{\cals(h)/2}$ and in~$\tilde{\xi}^{\{x\}}_{\tilde{G},\lambda;\cals(h)/2,\cals(h)}$. We thus have
$$A \times \{0\} \rightsquigarrow (z,\cals(h)/2) \rightsquigarrow (x,\cals(h)),$$
so~$\CP[A]{\tilde{G}}{\lambda}{\cals(h)}(x) = 1$ follows as desired.

\end{proof}

\end{document}